\pgfplotsset{compat=1.18}
\DeclareMathOperator{\diam}{diam}
\DeclareMathOperator{\bP}{\mathbb{P}}
\newcommand{\LL}{{\mathrm{LL}}}
\newcommand\bN{{\mathbb N}}
\newcommand\bR{{\mathbb R}}
\newcommand{\hr}{{\hat{r}}}
\newcommand{\hh}{{\hat{h}}}
\newcommand\cC{{\mathcal C}}
\newcommand\cF{{\mathcal F}}
\newcommand\cP{{\mathcal P}}
\newcommand\cE{{\mathcal E}}
\def\bfP{\mathbf{P}}
\numberwithin{equation}{section}
\newtheorem{thm}{Theorem}[section]
\newtheorem{prop}[thm]{Proposition}
\newtheorem{cor}[thm]{Corollary}
\newtheorem{lem}[thm]{Lemma}
\newtheorem{defn}[thm]{Definition}
\theoremstyle{remark}
\newtheorem{rmk}[thm]{Remark}
\begin{document}

\title[Polynomial rates of memory loss]{Improved polynomial rates of memory loss
for nonstationary intermittent dynamical systems}

\author{A.~Korepanov$^1$}
\address{$^1$School of Sciences, Great Bay University, Dongguan, Guangdong, China}
\email{khumarahn@gmail.com}

\author{J.~Lepp\"anen$^2$}
\address{$^2$Department of Mathematics, Tokai University, Kanagawa, 259-1292, Japan}
\email{leppanen.juho.heikki.g@tokai.ac.jp}
\thanks{\textit{Date}:  \today }
\thanks{A.K.~is supported by EPSRC grant EP/V053493/1. J.L.~is supported by JSPS via the project LEADER. 
The authors thank Wael Bahsoun for useful discussions, Loughborough University and Tokai University for hospitality over the course of this work, and anonymous reviewers for many helpful comments and questions.}

\begin{abstract}
We study nonstationary dynamical systems formed by sequential concatenation of nonuniformly expanding maps with a uniformly expanding first return map. Assuming a polynomially decaying upper bound on the tails of first return times that is nonuniform with respect to location in the sequence, we derive a corresponding sharp polynomial rate of memory loss. As applications, we obtain new estimates on the rate of memory loss for random ergodic compositions of Pomeau–Manneville type intermittent maps and intermittent maps with unbounded derivatives.
\end{abstract}

\maketitle


\section{Introduction}

In this paper, we study statistical properties of nonstationary  intermittent dynamical systems.
By nonstationarity we mean that the rules governing the system's evolution are not fixed, but
they vary with time under the influence of noise, fluctuating environment, control signals, or other
external factors.
This is modeled by time-dependent compositions $T_n \circ \cdots \circ T_1$ along a sequence of transformations
$T_i : X \to X$ acting on a state space $X$, where each $T_i$ describes the rules governing the evolution of states at time $i$.
When an initial state $x_0 \in X$ is sampled from a random distribution, the successive states
$x_n := T_n \circ \cdots \circ T_1(x)$ form a random process that is generally nonstationary. This is in contrast
with the conventional setting of ergodic theory developed in the stationary context of measure preserving
transformations. The statistical theory of nonstationary dynamical systems characterizes the long term behavior of $x_n$, for example through limit theorems for partial sums $\sum_{k=0}^{n-1} v_k(x_k)$ generated by observations $v_k : X \to \bR$ along the time-dependent trajectory.

A classical topic in dynamical systems theory is the decay of correlations, which describes how quickly
initial distributions converge toward an invariant distribution. However, a nonstationary dynamical system may lack any natural invariant distribution. In this context, memory loss, a counterpart to correlation decay, was studied by Ott, Stenlund, and Young~\cite{OSY09}.
Memory is said to be
lost if the time-evolutions $\mu_n, \mu_n'$ of any two sufficiently
regular initial distributions $\mu_0,\mu_0'$ on $X$ satisfy $\Vert \mu_n - \mu_n' \Vert \to 0$ as $n \to \infty$ with
respect to a suitable notion of distance $\Vert \cdot \Vert$. This property can be interpreted as all regular measures
being attracted by the same moving target in a space of measures.
The rate of memory loss has been the subject of several
studies in the dynamical systems literature, including~\cite{MO14, AHNTV15, C21-1, SYZ13, GOT13}, and it plays an important role in further statistical analysis of nonstationary models, as evidenced by studies such as~\cite{S22,HNT17,DFGV18,LS20, DMR22,DH24, DGS24, NSV12}, among others.

Intermittent dynamical systems are chaotic systems that include laminar regions where trajectories can become trapped for long periods of time before transitioning to more chaotic behavior, as seen in the Pomeau--Manneville scenario~\cite{PM80}. The
rate of memory loss for such systems, whether stationary or nonstationary, is known to be polynomial~\cite{AHNTV15,Y99,
LSV99, KL21}.

We now briefly describe the contents of our study.
Building on the coupling approach as in Korepanov, Kosloff, and Melbourne~\cite{KKM19}, in a previous study~\cite{KL21} we analyzed the rate of memory
loss in nonstationary nonuniformly expanding dynamical systems. We considered an abstract framework where the time-dependent trajectory $T_{1,n} := T_n \circ \cdots \circ T_1$ makes frequent returns to a reference set $Y \subset X$, with first return dynamics that exhibit \say{good} distortion properties. Under a polynomially decaying upper bound on the tails of first return times, we derived
memory loss estimates of the form
\begin{align}\label{eq:ml_intro}
| (T_{1,n})_*\mu -  (T_{1,n})_*\nu  | = O ( n^{-\gamma})
\end{align}
for suitable probability measures $\mu$ and $\nu$, where $|\cdot |$ is the total variation distance and $(\cdot)_*$ is the pushforward. The rate coincides with the sharp rate established by Gou\"{e}zel~\cite{G04} in the stationary case, meaning that it cannot be improved without additional information about the sequence $(T_n)$. On the other hand, the estimates in~\cite{KL21} represent
a worst-case scenario where all maps satisfy \emph{uniform} tail bounds of the same polynomial order.
In contrast, the present manuscript derives estimates on the rate of memory loss assuming \emph{non-uniform} tail bounds that can vary with time. This flexibility enables us to address questions such as how the rate of convergence in~\eqref{eq:ml_intro} is influenced by subsequences $(T_{n_k})$ of \say{good} maps, i.e. maps for which the decay rate of stationary correlations is fast in comparison with other maps in the sequence. For an illustration of this phenomenom, see Theorem~\ref{thm:lsv} below.

We apply our abstract result (Theorem~\ref{thm:decdec}) to obtain new bounds on memory loss for random \emph{ergodic} compositions of intermittent interval maps. These results can be viewed as complementary to those of
Bahsoun, Bose, and Ruziboev~\cite{BBR19}, who derived estimates on the quenched decay rate of
correlations for random \emph{i.i.d.}\ compositions of intermittent maps. Their approach, which differs
from ours, was based on the construction of a random Young tower in the spirit of~\cite{BBM02}. Besides the standard example due to Liverani, Saussol, and Vaienti~\cite{LSV99}, we present applications for two families of intermittent maps with unbounded derivatives (see Figure~\ref{fig:maps}),
studied by Cristadoro, Haydn, Marie, and Vaienti in~\cite{CHMV10}, and more recently by
Muhammad and Ruziboev in~\cite{MR24}: Pikovsky maps~\cite{P91} and Grossmann--Horner maps~\cite{GH85}. In our analysis we exploit the distortion bounds proved in~\cite{MR24}.

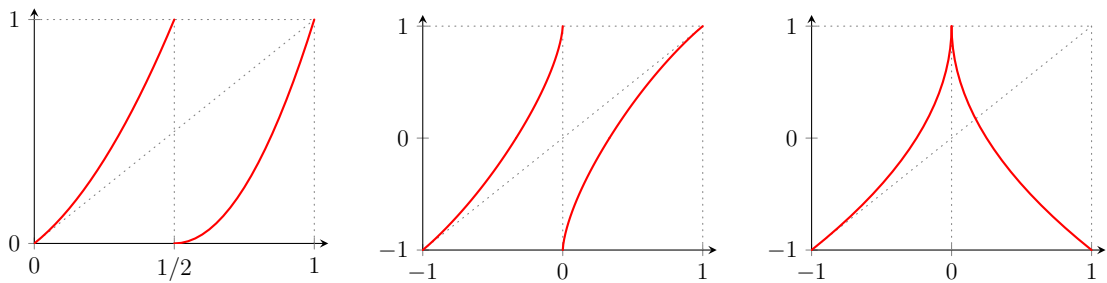
\begin{figure}[h!]\label{fig:maps}
\begin{tikzpicture}[every node/.style={scale=0.75}]
	\tikzmath{
		\ggamma = 1.0;
		\bbeta = 2.0;
	}
	\begin{axis}[
		height       = 1.85in,
		xmax         = 1.05,
		ymax         = 1.05,
		xtick        = {0.0, 0.5, 1.0},
		xticklabels  = {$0$, $1/2$, $1$},
		axis x line  = bottom,
		ytick        = {0.0, 1.0},
		yticklabels  = {$0$, $1$},
		axis y line  = left,
		line cap=round
		]
		\addplot[gray,dotted] coordinates { (0,1) (1,1) };
		\addplot[gray,dotted] coordinates { (1,0) (1,1) };
		\addplot[gray,dotted] coordinates { (0,0) (1,1) };
		\addplot[gray,dotted] coordinates { (0.5,0) (0.5,1) };
		\addplot[red,thick,domain=0.0:0.5, samples=21]{ x * (1 + (2*x)^\ggamma) };
		\addplot[red,thick,domain=0.5:1.0, samples=21]{ 2^\bbeta * (x - 0.5)^\bbeta};
	\end{axis}
\end{tikzpicture}
\quad
\begin{tikzpicture}[every node/.style={scale=0.75}]
	\tikzmath{
		\ggamma = 1.5;
	}
	\begin{axis}[
		height       = 1.85in,
		xmin         = -1.0,
		xmax         = 1.1,
		ymin         = -1.0,
		ymax         = 1.1,
		xtick        = {-1.0, 0.0, 1.0},
		xticklabels  = {$-1$, $0$, $1$},
		axis x line  = bottom,
		ytick        = {-1.0, 0.0, 1.0},
		yticklabels  = {$-1$, $0$, $1$},
		axis y line  = left,
		line cap=round
		]
		\addplot[gray,dotted] coordinates { (-1,1) (1,1) };
		\addplot[gray,dotted] coordinates { (1,1) (1,-1) };
		\addplot[gray,dotted] coordinates { (-1,-1) (1,1) };
		\addplot[gray,dotted] coordinates { (0,-1) (0,1) };
		\addplot[red,thick,domain=-1.0:0.0, samples=21]({0.5 / \ggamma * (1 + x)^\ggamma}, {x});
		\addplot[red,thick,domain=0.0:1.0, samples=21]({x + 0.5 / \ggamma * (1 - x)^\ggamma}, {x});
		\addplot[red,thick,domain=-1.0:0.0, samples=21]({- 0.5 / \ggamma * (1 + x)^\ggamma}, {-x});
		\addplot[red,thick,domain=0.0:1.0, samples=21]({- x - 0.5 / \ggamma * (1 - x)^\ggamma}, {-x});
	\end{axis}
\end{tikzpicture}
\quad
\begin{tikzpicture}[every node/.style={scale=0.75}]
	\tikzmath{
		\eeta = 1.25;
		\ggamma = 1.5;
		\a = 1.0;
		\b = 1.0;
	}
	\begin{axis}[
		height       = 1.85in,
		xmin         = -1.0,
		xmax         = 1.1,
		ymin         = -1.0,
		ymax         = 1.1,
		xtick        = {-1.0, 0.0, 1.0},
		xticklabels  = {$-1$, $0$, $1$},
		axis x line  = bottom,
		ytick        = {-1.0, 0.0, 1.0},
		yticklabels  = {$-1$, $0$, $1$},
		axis y line  = left,
		line cap=round
		]
		\addplot[gray,dotted] coordinates { (-1,1) (1,1) };
		\addplot[gray,dotted] coordinates { (1,1) (1,-1) };
		\addplot[gray,dotted] coordinates { (0,-1) (0,1) };
		\addplot[gray,dotted] coordinates { (-1,-1) (1,1) };
		\addplot[red,thick,domain=-1.0:1.0, samples=21]({0.25 *(x - 1)^2}, {x});
		\addplot[red,thick,domain=-1.0:1.0, samples=21]({- 0.25 *(x - 1)^2}, {x});
	\end{axis}
\end{tikzpicture}
\caption{Graphs of three intermittent maps. Left: Cui's map in~\eqref{eq:cui}; Middle: Pikovsky map~\eqref{eq:pikovsky};
Right: Grossmann--Horner map~\eqref{eq:horner_def}.}
\end{figure}

In the remainder of this introduction, we illustrate our main theorem by applying it to the example from~\cite{LSV99}. That is, for
$\gamma \in (0,1)$, we consider the map $T : X \to X$ on the unit interval
$X = [0,1]$
defined by
\begin{align}\label{eq:lsv}
	T(x)
	= \begin{cases}
		x( 1 + 2^{\gamma} x^{\gamma} ), &x \in [0, \tfrac12), \\
		2( x - \tfrac12 ),
		&x \in [\tfrac12, 1].
	\end{cases}
\end{align}
Let $\beta \in (0,1)$ and $a_\beta > 2^\beta(\beta+ 2)$.
Following~\cite{LSV99},
we define
the cone
\begin{align*}
	\cC_*(\beta) = \{f\in C((0,1])\cap L^1\,:\,
	& \text{$f\ge 0$, $f$ decreasing,}
	\\
	& \text{$x^{\beta+1}f$
		increasing, $f(x)\le a_\beta x^{-\beta} \int_0^1
		f(x) \, dx$}\}.
\end{align*}
This cone contains densities of invariant absolutely continuous probability measures
for maps~\eqref{eq:lsv} with parameters $\gamma \in [0, \beta]$.

Let $(T_n)$ be a sequence of maps $T_1,T_2, \ldots$ in the family~\eqref{eq:lsv} corresponding to parameters
$\gamma_1, \gamma_2, \ldots$, and suppose that $\sup_{n} \gamma_n \le \gamma_* < 1$. By~\cite[Theorem 1.1]{KL21},
whenever $\mu$ and $\nu$ are measures with H\"{o}lder continuous densities,
\begin{align}\label{eq:ml_general}
	| (T_{1,n})_*(  \mu - \nu )  | = O( n^{- 1/\gamma_* } ), \quad \text{as $n \to \infty$.}
\end{align}
In other words, memory loss occurs at a rate corresponding to the map with parameter $\gamma_*$, which
is optimal for a general sequence $(T_n)$ with $\sup_{n} \gamma_n \le \gamma_*$.
However, the following result shows that if the upper and lower asymptotic densities of \say{good} maps
in the sequence $(T_{1,n})$ are sufficiently close, this rate can be improved:

\begin{thm}\label{thm:lsv} Let $\gamma \in (0,1)$. There exists a constant $\kappa_0 \in (0,1)$ such that the following holds.
	Assume that for some $a > 0$, $\kappa \le \kappa_0$, and $N \in \bN$,
	\begin{align}\label{eq:freq_cond-intro}
		\frac{ \#  \{  1 \le k \le n \: : \:   \gamma_k \le \gamma  \}  }{n} \in [  a(1 - \kappa ), a ( 1 + \kappa )   ]
	\end{align}
	holds for all $n \ge N$.
	Let
	$\mu$ and $\mu'$ be probability measures with H\"{o}lder continuous densities,
	and let $\nu$ be a probability measure on $X$
	with density in $\cC_*(\gamma)$. Then
	\begin{itemize}
		\item[(a)] $| (T_{1,n})_*( \mu - \nu ) |
		= O(n^{- 1/\gamma + 1})$,
		\item[(b)] $| (T_{1,n})_*( \mu - \mu' ) |
		= O(n^{- 1/\gamma})$.
	\end{itemize}
\end{thm}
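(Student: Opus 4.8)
The plan is to derive both statements from the abstract result Theorem~\ref{thm:decdec}. One checks that the sequence $(T_n)$ drawn from the family~\eqref{eq:lsv} fits the abstract framework, quantifies the location-dependent tails of the first return times using the frequency hypothesis~\eqref{eq:freq_cond-intro}, and reads off the rate. The reference set is $Y=[\tfrac12,1]$: each $T_n$ maps $Y$ affinely onto $[0,1]$, and, exactly as in~\cite{KL21,LSV99}, the first-entry maps of the concatenations $T_{1,n}$ to $Y$ are uniformly expanding with distortion bounded uniformly along the sequence (using only the standing bound $\sup_n\gamma_n\le\gamma_*<1$). Thus the abstract hypotheses on the return geometry and distortion hold, and it remains to supply the return-time tail data and the regularity of the initial measures near the neutral fixed point $0$.

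The crux is the tail estimate. For $x\in[0,\tfrac12)$ write $\tau_k(x)=\min\{m\ge1:T_{k+1,k+m}(x)\in Y\}$. Since $T_j(x)=x\bigl(1+2^{\gamma_j}x^{\gamma_j}\bigr)$ near $0$, setting $\xi=x^{-\gamma}$ a one-line computation gives $\bigl(T_j(x)\bigr)^{-\gamma}\le\xi$ always (orbits move away from $0$) and, for all sufficiently small $x$, $\bigl(T_j(x)\bigr)^{-\gamma}\le\xi-c_\gamma$ whenever $\gamma_j\le\gamma$, with $c_\gamma>0$. Hence $x^{-\gamma}$ never increases and drops by at least $c_\gamma$ at each \say{good} step, so the orbit leaves $[0,\tfrac12)$ once the number of indices $j\in[k+1,k+m]$ with $\gamma_j\le\gamma$ exceeds a constant multiple of $x^{-\gamma}$. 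Subtracting the bound~\eqref{eq:freq_cond-intro} for $[1,k]$ from that for $[1,k+m]$ shows that, provided $m$ is not too small relative to $k$, the window $[k+1,k+m]$ contains at least $\asymp a\,m$ such indices and no \say{bad} run longer than $\asymp\kappa k$; here is where $\kappa\le\kappa_0$ enters. Combining, one obtains the location-uniform bound
\begin{align*}
\Leb\bigl(x\in[0,\tfrac12):\tau_k(x)>m\bigr)\le C\,m^{-1/\gamma},
\end{align*}
and more generally $\rho\bigl(\tau_k>m\bigr)\le C\,b\,m^{-1/\gamma+1}$ for any measure $\rho$ with density $\le b\,x^{-\gamma}$ near $0$, since then $\{\tau_k>m\}\subseteq[0,\delta]$ with $\delta\asymp m^{-1/\gamma}$. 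This is the polynomially decaying, location-nonuniform tail input needed by Theorem~\ref{thm:decdec}.

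It remains to record the regularity of the initial measures. For $\mu$ with H\"older density, $(T_{1,n})_*\mu$ has density bounded above on compact subsets of $(0,1]$, and by the distortion bounds, whenever mass of $(T_{1,n})_*\mu$ visits $Y$ it does so with density in a fixed bounded cone on $Y$; consequently all the first-entry tails relevant to $\mu$ are of the Lebesgue type $\lesssim m^{-1/\gamma}$. For $\nu$ with density in $\cC_*(\gamma)$, the defining inequality $f\le a_\gamma x^{-\gamma}\int f$ yields $\nu(\tau_1>m)\lesssim m^{-1/\gamma+1}$ for the first excursion, the monotonicity conditions in $\cC_*(\gamma)$ supplying exactly the control needed for the distortion estimates; after this first passage $\nu$ behaves like a bounded measure, with tails $\lesssim m^{-1/\gamma}$. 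Feeding these into Theorem~\ref{thm:decdec}, the rate is governed by the heavier of the two measures' tails: in case~(b), both sides are of bounded type, giving memory loss $O(n^{-1/\gamma})$; in case~(a), the $\nu$-side contributes the heavier tail $m^{-1/\gamma+1}$, which dominates, giving $O(n^{-1/\gamma+1})$.

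The main obstacle is the location-uniformity in the crux estimate: \eqref{eq:freq_cond-intro} constrains only windows of the form $[1,n]$, whereas the coupling behind Theorem~\ref{thm:decdec} needs control of excursions starting at arbitrary times $k$, that is, of windows $[k+1,k+m]$. Turning the two-sided asymptotic-density bound into a genuine lower bound on the number of \say{good} maps in such a window — and, for part~(a), ensuring that \say{bad} maps acting over short runs do not push a $\cC_*(\gamma)$-density toward the more singular profile $x^{-\gamma_*}$ — is what forces $\kappa_0$ to be small; moreover $\kappa_0$ must be chosen small enough that the resulting tail decay is compatible with the decay order $1/\gamma$ required in the hypotheses of Theorem~\ref{thm:decdec}.
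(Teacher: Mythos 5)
Your overall route is the same as the paper's: verify the abstract framework for LSV maps, quantify the return-time tails from the frequency hypothesis~\eqref{eq:freq_cond-intro}, and read the rates off Theorem~\ref{thm:decdec}. The drop-of-$x^{-\gamma}$ computation you use to derive the tail decay is a self-contained version of the mechanism the paper invokes via the cited asymptotics~\eqref{eq:estim_xk} together with the implication $\tau_k\ge\ell\Rightarrow\widetilde{\tau}_k\ge G_\ell(k)$ in Proposition~\ref{prop:tb_lsv}; either presentation is acceptable.

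The gap is in the crux estimate, where you claim a \say{location-uniform bound} $\Leb(\tau_k>m)\le Cm^{-1/\gamma}$. This cannot hold uniformly in $k$. Subtracting~\eqref{eq:freq_cond-intro} at $[1,k]$ from that at $[1,k+\ell]$ yields, for the count $G_\ell(k)$ of good indices in $[k,k+\ell-1]$, only the lower bound $G_\ell(k)\ge\tfrac{a(1-\kappa)}{2}\bigl(\ell-\tfrac{4\kappa}{1-\kappa}k\bigr)$, which is vacuous for $\ell\lesssim\kappa k$: a bad run of length comparable to $\kappa k$ beginning at time $k$ is fully compatible with~\eqref{eq:freq_cond-intro}, and on such a window $h^k(\ell)$ decays only at the slow rate $\ell^{-1/\gamma^*}$. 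The correct statement, proved in Proposition~\ref{prop:tb_lsv}, is $h^k(\ell)\le C\bigl(1\vee(\ell-\Theta k)\bigr)^{-1/\gamma}$ with $\Theta=4\kappa(1-\kappa)^{-1}$. This is not a cosmetic difference: if the tails really were location-uniform, the earlier uniform-tail result of~\cite{KL21} would already yield $O(n^{-1/\gamma})$ and Theorem~\ref{thm:decdec} would be superfluous. The sequence $\Theta_j$ in hypothesis~\eqref{eq:tb_assumption} exists precisely to quantify the $k$-dependent degradation you are arguing away, and the role of $\kappa_0$ is not to produce uniformity but to ensure $\Theta$ lies below the smallness threshold that Theorem~\ref{thm:decdec} requires. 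You do flag the difficulty in your closing paragraph, but the resolution is to retain the $\Theta k$ shift in the tail bound, feed it into~\eqref{eq:tb_assumption}, and observe that for $k=1$ the resulting factor $(\Theta_1^*+1)^{\beta'}$ is a harmless constant; claiming uniformity sidesteps the actual mechanism.
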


\begin{rmk} A concrete, although complicated, expression for $\kappa_0$ depending on dynamical constants could be extracted from the proofs of Propositions \ref{prop:tb_lsv} 
and \ref{prop:Stail}. 
\end{rmk}

\begin{rmk}
Condition~\eqref{eq:freq_cond-intro} is satisfied by random ergodic compositions of maps in
\eqref{eq:lsv} provided that maps with parameters below $\gamma$ are sampled with positive probability.
See Corollary~\ref{cor:rds-1} for a precise statement.
\end{rmk}

\begin{rmk}
    An interesting question, for which we do not have an answer, is whether the assumption on the
    upper asymptotic density of good maps is necessary in Theorem~\ref{thm:lsv}.
    It is important for our arguments, however: it makes sure that the good maps are
    distributed sufficiently uniformly (see the proof of Proposition~\ref{prop:tb_lsv}),
    for which the lower bound alone is not sufficient.
\end{rmk}

\subsection*{Structure of the paper} In Section~\ref{sec:results}, we
present our main result, an abstract
version of Theorem~\ref{thm:lsv}. In Section~\ref{sec:app}, we formulate and prove
the applications for intermittent maps mentioned in the Introduction.
Section~\ref{sec:proof} contains the proof of the main result, and
Appendix~\ref{sec:prop_mixing} contains the proof of a proposition stated Section~\ref{sec:results} that is
used in one of the applications.

\section{Abstract setup and results}\label{sec:results}

\subsection{Nonstationary nonuniformly expanding dynamical systems, nonuniform tail bounds}
\label{sec:nnue}

Let $(X,d)$ be a bounded metric space.
We endow $X$ with the Borel sigma-algebra, and we only work with measurable sets.
For each $k \ge 1$, let $Y_k \subset X$ and let $m_k$ be a probability measure on $X$ with $m_k(Y_k) = 1$.
We consider a sequence $T_1, T_2, \ldots$ of transformations $T_i : X \to X$ and denote
$T_{k,\ell} = T_\ell \circ \cdots \circ T_k$. (If $k > \ell$,
then $T_{k, \ell}$ is the identity map.)

For a nonnegative measure $\mu$ on $Y_k$ with density $\rho = d\mu / dm_k$, we denote
by $|\mu|_{\LL, k}$ the Lipschitz seminorm of the logarithm of $\rho : Y_k \to \bR_+$:
\[
    |\mu|_{\LL,k}
    = \sup_{y \neq y' \in Y_k} \frac{|\log \rho(y) - \log \rho(y')|}{d(y,y')}
    ,
\]
with a convention that $\log 0 = -\infty$ and $\log 0 - \log 0 = 0$.

We suppose that there exist constants $\lambda > 1$ and $K > 0$
such that the following assumptions hold.

For $x \in X$, $k \ge 1$, define return time functions $\tau_k : X \to \{1,2,\ldots\}$,
\[
    \tau_k(x)
    = \inf \{n \geq 1 : T_{k,k+n-1} (x) \in Y_{k+n} \}.
\]
First, for each $k \ge 1$, we assume existence of a finite or countable partition $\cP_k$ of $X$,
such that $Y_k$ is $\cP_k$-measurable.
Moreover, for each $a \in \cP_k$ we make the following assumptions:
\begin{enumerate}[label=(NU:\arabic*)]
    \item $m_k(a) > 0$ for all $a \subset Y_k$.
    \item\label{ass:constant} $\tau_k$ is constant on $a$ with value $\tau_k(a)$.
    \item\label{ass:distortion} If $a \subset Y_k$, then the map $F_a = T_{k,k+\tau_k(a)-1} |_a : a \to Y_{k + \tau_k(a)}$ is a bijection,
        and for all $y,y' \in a$,
        \[
            d(F_a(y), F_a(y')) \geq \lambda d(y,y')
            .
        \]
        Further, $F_a$ is nonsingular with log-Lipschitz Jacobian with respect to $m_{k + \tau_k(a)}$:
        \[
            \zeta = \frac{d (F_a)_* (m_k|_a)}{dm_{ k + \tau_k(a) }}
            \quad \text{satisfies} \quad
            |\zeta|_{\LL, k + \tau_k(a)} \leq K
            .
        \]
    \item\label{ass:expansion} For all $x,x' \in a$, with $F_a = T_{k, k + \tau_k(a) -  1} |_a$ as above,
        \[
            \max_{0 \leq j \leq \tau_k(a)} d(T_{k,k+j-1}(x), T_{k,k+j-1}(x'))
            \leq K d(F_a(x), F_a(x'))
            .
        \]
\end{enumerate}
In other words, the first return map $y \mapsto T_{k, k + \tau_k(y) - 1}(y)$
with respect to the sequence $T_k, T_{k+1}, \ldots$ is full branch Gibbs-Markov,
and returns from outside of $Y_k$ have bounded backward expansion.

To quantify mixing, for each $k \ge 1$ we define
\begin{align}\label{eq:nonuniform_tails}
	h^k(n) = m_k(  \tau_k \ge n )
        ,
\end{align}
and assume that there exist $\delta_0 > 1$ and $n_0 \ge 0$ such that:
\begin{enumerate}[label=(NU:\arabic*),resume]
	\item\label{ass:mixing}
	$( (T_{k,k+n-1})_* m_k )(Y_{k+n}) \geq \delta_0$ for every $k \ge 1$ and $n \geq n_0$.
\end{enumerate}

Thus, our setting is a variant of that in~\cite{KL21}, with the crucial difference that we relax the assumption
on tail bounds in~\cite{KL21}, allowing them to depend on $k$.

It is sometimes convenient to use the following to verify~\ref{ass:mixing}:

\begin{prop}\label{prop:mixing} Suppose that (NU:1-4) hold. Then, in order to satisfy~\ref{ass:mixing}, it is sufficient that
the following conditions hold:
	\begin{itemize}
		\item[(i)] there exists $N_\# \ge 1$ such that $\sup_{k,j \ge 1} (T_{k, k+j-1})_*m_k( \tau_{k + j} \ge N_\# ) \le 1/2$.
		\item[(ii)] there exist $\delta_\# > 0$ and coprime integers $p_1,\ldots p_M$ such that
		$m_k(  \tau_k = p_m ) \ge \delta_\#$ for all $k \ge 1$ and $1 \le m \le M$.
	\end{itemize}
Moreover, for condition (i) it is sufficient that $\sup_{k \ge 1} \int \tau_k^p \, d m_k < \infty$ holds for some $p > 1$.
\end{prop}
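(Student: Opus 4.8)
The plan is a renewal-type argument built on the successive returns to the sets $Y_{k+\bullet}$ of the orbit of a point distributed according to $m_k$. Fix $k$, write $\mu_s := (T_{k,k+s-1})_* m_k$, so that $\mu_s(Y_{k+s})$ is the mass that has \emph{returned} to $Y$ at time $s$, and observe that~\ref{ass:mixing} is precisely the statement that $\mu_s(Y_{k+s})$ is bounded below for $s$ large, uniformly in $k$. The first preliminary is a distortion fact from~\ref{ass:distortion}: with $c := K\lambda/(\lambda-1)$ and $D := \diam X$, if a probability density on some $Y_{k'}$ (with respect to $m_{k'}$) has $\log$ that is $c$-Lipschitz, then so does its image under any first-return map after restriction to a union of branches and summation over branches (each branch $a$ contributes $(\rho\circ F_a^{-1})\zeta_a$, whose $\log$ is $(c/\lambda + K)$-Lipschitz and $c/\lambda + K = c$, and a sum of positive functions with this property retains it). Inducting along the return history, starting from the constant density of $m_k$, the density of $\mu_s$ restricted to $Y_{k+s}$ is therefore $\log$-$c$-Lipschitz, hence on the bounded space $X$ lies between $e^{-cD}$ and $e^{cD}$ times its average. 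The second preliminary is the arithmetic input of~(ii): since $\gcd(p_1,\dots,p_M)=1$, there is $r_* \in \bN$ such that every integer $r \ge r_*$ is a nonnegative integer combination $r = \sum_{m=1}^M c_m p_m$, and then $\sum_m c_m \le r$ because each $p_m \ge 1$.

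To establish~\ref{ass:mixing}, let $N_\#$ be as in~(i), put $n_0 := r_* + N_\# + 2$, and fix $n \ge n_0$. Applying~(i) at time $t_0 := n - r_* - N_\# - 1 \ge 1$ gives $\mu_{t_0}(\tau_{k+t_0}\ge N_\#) \le \tfrac12$; unwinding the definition of $\tau_{k+t_0}$, this says that with $\mu_{t_0}$-probability $\ge \tfrac12$ the orbit visits $Y$ at some time in a window $W$ of fewer than $N_\#$ consecutive times contained in $[\,n-r_*-N_\#,\,n-r_*\,]$, so a union bound over $W$ gives $\sum_{s\in W}\mu_s(Y_{k+s}) \ge \tfrac12$ and hence $\mu_{s_0}(Y_{k+s_0}) \ge (2N_\#)^{-1} =: c_0$ for some $s_0 \in W$; moreover $n-s_0 \in [\,r_*,\,r_*+N_\#\,]$. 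By the distortion fact the density of $\mu_{s_0}$ restricted to $Y_{k+s_0}$ is at least $e^{-cD}$ times its average $\mu_{s_0}(Y_{k+s_0}) \ge c_0$. Write $n - s_0 = \sum_m c_m p_m$ with $\sum_m c_m \le n - s_0 \le r_* + N_\#$, and push $\mu_{s_0}$ restricted to $Y_{k+s_0}$ forward through precisely this prescribed list of $\sum_m c_m$ first returns, discarding at each step all branches whose return time differs from the prescribed $p_m$; by~(ii) the retained branches carry $m$-measure $\ge \delta_\#$, so combined with the cone lower bound each step multiplies the surviving mass by at least $e^{-cD}\delta_\#$, and the new density is again $\log$-$c$-Lipschitz. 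After all $\sum_m c_m$ steps the surviving measure sits on $Y_{k+s_0+\sum_m c_m p_m} = Y_{k+n}$ with mass $\ge c_0(e^{-cD}\delta_\#)^{r_*+N_\#}$; this bounds $\mu_n(Y_{k+n})$ from below, and being independent of $k$ and $n$ it is the required $\delta_0$.

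For the final assertion, assume $M_p := \sup_{k}\int \tau_k^p\,dm_k < \infty$ with $p > 1$. For any $k$ and $j$, $\mu_j(\tau_{k+j}\ge N)$ equals the $m_k$-probability that the orbit misses $Y$ at each of the times $j+1,\dots,j+N-1$; decomposing this event according to the last visit time $t\le j$ (at which, by the distortion fact, the density of $\mu_t$ on $Y_{k+t}$ is at most $e^{cD}$ times its average $\mu_t(Y_{k+t})\le1$), the event forces the return time from $t$ to satisfy $\tau_{k+t}\ge j+N-t$, whence
\[
	\mu_j(\tau_{k+j}\ge N)\;\le\;\sum_{t=0}^{j}\mu_t(Y_{k+t})\,e^{cD}\,m_{k+t}(\tau_{k+t}\ge j+N-t)\;\le\;e^{cD}M_p\sum_{u\ge N}u^{-p}\;=\;O(N^{1-p}),
\]
uniformly in $k$ and $j$. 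Choosing $N_\#$ large enough makes the left-hand side $\le\tfrac12$ for all $k,j$, which is~(i).

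The step I expect to demand the most care is the distortion bookkeeping: one must check that the density of $\mu_s$ restricted to $Y_{k+s}$, and of every intermediate measure in the propagation step, really does stay in the cone of $\log$-$c$-Lipschitz densities despite summation over countably many branches and over the random history of earlier returns, and one must condition only on events measurable with respect to the orbit up to the return time in question, so that the cone bound applies to the pertinent conditional densities. The remaining ingredients — the union bound, the numerical-semigroup fact, and the $\sum u^{-p}$ tail estimate — are routine.
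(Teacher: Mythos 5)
Your proof is correct and follows the same two-stage structure as the paper's: use condition (i) to guarantee that within a bounded window just before time $n$ a definite fraction of mass sits on $Y$, then use condition (ii) plus the numerical-semigroup fact about coprime $p_m$ to propagate a definite fraction of that mass to $Y_{k+n}$ through a prescribed chain of first-return branches, with the log-Lipschitz density cone ensuring a uniform lower bound at each step. The principal differences are cosmetic and organizational. For the propagation step the paper simply invokes the stationary-case computation~\cite[Proposition~4.4]{KKM19} and takes $N_1 = \max_m p_m^2$ via Sylvester's bound, whereas you re-derive the propagation from scratch (correctly), and you identify the cone exponent as $c = K\lambda/(\lambda-1)$, the infimum of the admissible $K_1$, which is fine because the cone $\{|\cdot|_{\LL}\le c\}$ is (non-strictly) invariant under the return transfer operator. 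For locating a time with mass on $Y$, you use a union bound over the window $\{t_0+1,\dots,t_0+N_\#-1\}$ to extract a single $s_0$ with $\mu_{s_0}(Y_{k+s_0})\ge(2N_\#)^{-1}$; the paper instead decomposes $\nu = (T_{k,k+n-N-1})_*m_k$ by the value $\ell\le N_\#$ of $\tau_{k+n-N}$, applies the propagation bound on each branch, and sums. These are equivalent ways of exploiting (i). For the final assertion, you decompose the event $\{\tau_{k+j}\ge N\}$ by the last visit time $t\le j$, apply the upper cone bound $e^{cD}$ for the density on $Y_{k+t}$, and use Markov on the $p$-th moment to get $O(N^{1-p})$; the paper reaches the same conclusion via its Proposition~\ref{prop:onedec}\ref{prop:onedec:tail}, which packages exactly the same last-visit decomposition into the tail bound $\tfrac12 h_j^k$, then applies Markov. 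Your distortion caveat at the end is well placed: the summation over branches must be done within each fixed target set $Y_{k+s}$ (grouping by return history), which is precisely what the paper's Proposition~\ref{prop:regular}(b)--(c) formalizes; with that understanding the argument is sound.
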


\begin{proof} The proof is similar to that in the stationary case~\cite[Section 4.1]{KKM19}. For completeness, we provide more details in
Appendix~\ref{sec:prop_mixing}.
\end{proof}

\subsection{Memory loss} We consider a sequence of maps $(T_n)$
satisfying (NU:1-5), equipped with a sequence of tail bounds $(h^k)$.

\begin{prop}
	\label{prop:K}
	There exist constants $0 < K_1 < K_2$, depending only on
	$\lambda$ and $K$, such that for each $k \ge 1$ and
	for each nonnegative measure $\mu$ on $Y_k$
	with $|\mu|_{\LL, k} \leq K_2$,
	\[
	\bigl| (  T_{k, k + \tau_k(a) - 1}  )_* (\mu|_a) \bigr|_{\LL, k + \tau_k(a)} \leq K_1
	,
	\]
	whenever $a \in \cP_k$, $a \subset Y_k$.
	The constants $K_1$, $K_2$ can be chosen arbitrarily large.
\end{prop}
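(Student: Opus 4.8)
The plan is a single change-of-variables computation followed by a triangle inequality. Fix $k \ge 1$ and $a \in \cP_k$ with $a \subset Y_k$, write $\ell = k + \tau_k(a)$, and let $F = F_a = T_{k,\ell-1} : a \to Y_\ell$, which by~\ref{ass:distortion} is a bijection satisfying $d(Fy,Fy') \ge \lambda\,d(y,y')$ for all $y,y' \in a$, with Jacobian $\zeta = d(F_*(m_k|_a))/dm_\ell$ obeying $|\zeta|_{\LL,\ell} \le K$. I would first dispose of the degenerate case: if $\mu = 0$ the pushforward vanishes and the bound is trivial, so assume $\mu \neq 0$; then, since $|\mu|_{\LL,k} \le K_2 < \infty$, the density $\rho = d\mu/dm_k$ is strictly positive on $Y_k$, and likewise $\zeta$ is strictly positive on $Y_\ell$ because it is log-Lipschitz and has total mass $m_k(a) > 0$ by (NU:1).

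Next I would identify the density of the pushforward. Since $F$ is a bijection onto $Y_\ell$, for Borel $B \subset Y_\ell$ the change-of-variables formula gives
\[
    (F_*(\mu|_a))(B) = \int_{F^{-1}B} \rho \, dm_k
    = \int_B (\rho \circ F^{-1}) \, d(F_*(m_k|_a))
    = \int_B (\rho \circ F^{-1})\,\zeta \, dm_\ell ,
\]
so the density of $(T_{k,\ell-1})_*(\mu|_a)$ with respect to $m_\ell$ is $(\rho \circ F^{-1})\cdot\zeta$, which is strictly positive on $Y_\ell$.

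Then I would estimate the log-Lipschitz seminorm of this density. Taking logarithms splits it as $\log\rho \circ F^{-1} + \log\zeta$, so for $z \neq z' \in Y_\ell$, writing $y = F^{-1}z$ and $y' = F^{-1}z'$ in $a \subset Y_k$, the expansion bound $d(y,y') \le \lambda^{-1} d(z,z')$ yields
\[
    |\log\rho(y) - \log\rho(y')| \le |\mu|_{\LL,k}\, d(y,y') \le K_2\lambda^{-1}\, d(z,z'),
\]
while $|\log\zeta(z) - \log\zeta(z')| \le K\, d(z,z')$ directly from~\ref{ass:distortion}. Adding the two contributions and dividing by $d(z,z')$ gives $\bigl|(T_{k,\ell-1})_*(\mu|_a)\bigr|_{\LL,\ell} \le K_2\lambda^{-1} + K$.

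Finally I would close the bookkeeping on the constants: set $K_1 = K_2\lambda^{-1} + K$, so that $K_1 < K_2$ holds precisely when $K_2 > K\lambda/(\lambda - 1)$. Hence it suffices to choose any $K_2 > K\lambda/(\lambda-1)$ and the corresponding $K_1$; both depend only on $\lambda$ and $K$, and since $K_2$ may be taken arbitrarily large, so may $K_1$. This argument is essentially routine; the only point requiring a little care is the $\log 0$ convention in the definition of $|\cdot|_{\LL,\cdot}$, which is handled by the reduction to strictly positive densities above, so I do not anticipate a genuine obstacle.
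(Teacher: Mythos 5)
Your proof is correct and follows exactly the same route as the paper: the paper simply cites the key estimate $\bigl|(T_{k,k+\tau_k(a)-1})_*(\mu|_a)\bigr|_{\LL,k+\tau_k(a)} \le K + \lambda^{-1}|\mu|_{\LL,k}$ as standard (from KKM19, Proposition 3.1) and then chooses $K_2 > (1-\lambda^{-1})^{-1}K$ with $K_1 = K + \lambda^{-1}K_2$, which is precisely the constant bookkeeping you carry out. You merely fill in the change-of-variables computation underlying that cited inequality; the treatment of the degenerate case and the $\log 0$ convention is a correct but minor addition.
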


\begin{rmk}
	If $\mu$ is a measure supported on $Y_k$ and $a \in \cP_k$, then
	$(T_{k, k + \tau_k(a) - 1})_* (\mu|_a)$ is supported on $Y_{k + \tau_k(a)}$.
\end{rmk}

\begin{proof}[Proof of Proposition~\ref{prop:K}]
	It is standard, see e.g.~\cite[Proposition~3.1]{KKM19}, that
	\[
	\bigl| ( T_{k, k + \tau_k(a) - 1}  )_* (\mu|_a) \bigr|_{\LL, k + \tau_k(a)}
	\leq K + \lambda^{-1} |\mu|_{\LL, k}
	.
	\]
	We can choose any $K_2 > (1-\lambda^{-1})^{-1} K$ and $K_1 = K + \lambda^{-1} K_2$.
\end{proof}

Fix $K_1$, $K_2$ as in Proposition~\ref{prop:K}. The notion of regularity defined below is a slight modification of the one considered in~\cite{KL21}.

\begin{defn} \label{def:reg}
        Let $k \ge 1$.
	A nonnegative measure $\mu$ on $X$ is called \emph{regular}
	with respect to $T_k, T_{k+1}, \ldots$
        if for every $\ell \geq 1$, setting $a_{k, \ell} = \{ x \in X : \tau_k(x) = \ell\}$,
	\begin{align}\label{eq:regular}
            \bigl| (T_{k, k + \ell - 1} )_* (\mu|_{a_{k,\ell}}) \bigr|_{\LL, k + \ell}
	\leq K_1
	.
	\end{align}
	Given  $r : \{0,1,\ldots\} \to [0,\infty)$,
	we say that $\mu$ has \emph{tail bound} $r$ with respect to $T_k, T_{k+1}, \ldots$
	if for all $n \geq 0$,
	\begin{align}\label{eq:tail_bound}
	\mu \bigl( \{ x \in X : \tau_k(x) \geq n \}\bigr)
	\leq r( n )
	.
	\end{align}
	We say that $\mu$ is regular with tail bound $r$ w.r.t.\ $T_{k}, T_{k+1}, \ldots$, if both~\eqref{eq:regular}
        and~\eqref{eq:tail_bound} are satisfied.
\end{defn}

We gather in the following result some basic properties about regular measures that will be
used  later.

\begin{prop}\label{prop:regular} Let $k \ge 1$ and $n \ge 0$ be arbitrary.
\begin{itemize}
	\item[(a)] The measure $m_k$ is regular with tail bound $r = h^k$ w.r.t.\
	$T_{k}, T_{k+1}, \ldots$,
	and every measure $\mu$ on $Y_k$ with $|\mu|_{\LL, k} \leq K_2$ is regular with tail bound
	$r = \mu(Y_k) e^{ \diam ( X)   K_2} h^k$ w.r.t.\ $T_{k}, T_{k+1}, \ldots$ \smallskip
	\item[(b)] If $\{\mu_j\}$ is a finite or countable collection of measures regular w.r.t.\ $T_k, T_{k+1}, \ldots$,
	then $\mu = \sum_{j} \mu_j$ is regular w.r.t.\ $T_k, T_{k+1}, \ldots$  \smallskip
	\item[(c)] 	If $\mu$ is a regular measure w.r.t.\ $T_k, T_{k+1}, \ldots$,  then both
	$(T_{k, k + n -1})_*\mu$ and $((T_{k, k + n -1})_*\mu)|_{X \setminus Y_{k+n}}$ are regular w.r.t.\ $T_{k + n}, T_{k+ n + 1}, \ldots$
        Moreover, if $n \ge 1$,
	 \[
	\Bigl| \bigl( (T_{k, k +  n - 1})_* \mu \bigr) \big|_{Y_{k+n}} \Bigr|_{\LL, k + n}
	\leq K_1
	.
	\]
\end{itemize}
\end{prop}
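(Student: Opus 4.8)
The plan is to establish (a), then (b), then (c), deducing (c) by induction on $n$ from the first two. The workhorse throughout is the elementary mediant inequality: if $\{\mu_j\}$ is a finite or countable family of nonnegative measures on a common measurable space, each absolutely continuous with density whose logarithm has Lipschitz constant at most $c$, then $\sum_j \mu_j$ has the same property, since $\frac{\sum_j a_j}{\sum_j b_j}\le \sup_j \frac{a_j}{b_j}$ for nonnegative $a_j$ and positive $b_j$ (letting the number of terms tend to infinity if needed).

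For (a), the point is Proposition~\ref{prop:K}. Fix $\ell\ge 1$. By~\ref{ass:constant}, $a_{k,\ell}=\{\tau_k=\ell\}$ is a union of cells of $\cP_k$, and since $m_k$ (resp.\ $\mu$) is carried by $Y_k$, only the cells $a\subset Y_k$ with $\tau_k(a)=\ell$ contribute to $m_k|_{a_{k,\ell}}$ (resp.\ $\mu|_{a_{k,\ell}}$). For each such $a$, Proposition~\ref{prop:K} — applied to $m_k|_a$, which has $\bigl|m_k|_a\bigr|_{\LL,k}=0\le K_2$, respectively to $\mu$ using $|\mu|_{\LL,k}\le K_2$ — gives $\bigl|(T_{k,k+\ell-1})_*(m_k|_a)\bigr|_{\LL,k+\ell}\le K_1$, respectively the same bound with $\mu$ in place of $m_k$. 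By~\ref{ass:distortion}, $T_{k,k+\ell-1}$ maps each such $a$ bijectively onto $Y_{k+\ell}$, so the summands all have full support on $Y_{k+\ell}$ and the mediant inequality yields~\eqref{eq:regular}. For the tail bounds, $m_k(\tau_k\ge n)=h^k(n)$ by~\eqref{eq:nonuniform_tails}; and if $\rho=d\mu/dm_k$ with $|\log\rho|_{\Lip}\le K_2$, then $\sup_{Y_k}\rho\le e^{\diam(X)K_2}\inf_{Y_k}\rho\le e^{\diam(X)K_2}\mu(Y_k)$ (using $m_k(Y_k)=1$), whence $\mu(\tau_k\ge n)\le e^{\diam(X)K_2}\mu(Y_k)\,h^k(n)$. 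Item (b) is then immediate: for each $\ell$, $(T_{k,k+\ell-1})_*(\mu|_{a_{k,\ell}})=\sum_j (T_{k,k+\ell-1})_*(\mu_j|_{a_{k,\ell}})$, and the mediant inequality applies to the densities.

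For (c), I would induct on $n$. The case $n=0$ is immediate for $\mu$ (it is the hypothesis), while the regularity of $\mu|_{X\setminus Y_k}$ follows from the definition together with the structure of $\cP_k$ on $X\setminus Y_k$. For the inductive step, fix $n\ge 1$, put $\nu=(T_{k,k+n-1})_*\mu$, and decompose $\mu$ according to the first entry time of the orbit to $Y$: $\mu=\sum_{m\ge 1}\mu|_{a_{k,m}}$, the part of $\mu$ on $\{\tau_k=\infty\}$ (if any) never re-entering $Y$ and so playing no role below. For $m\le n$, set $\eta_m=(T_{k,k+m-1})_*(\mu|_{a_{k,m}})$; by~\eqref{eq:regular} with $\ell=m$, $\eta_m$ is a measure on $Y_{k+m}$ with $|\eta_m|_{\LL,k+m}\le K_1\le K_2$, hence regular with respect to $T_{k+m},T_{k+m+1},\ldots$ by (a). Since $(T_{k,k+n-1})_*(\mu|_{a_{k,m}})=(T_{k+m,k+n-1})_*\eta_m$ and $n-m<n$, the inductive hypothesis applied to $\eta_m$ shows that this measure, and its restriction to $X\setminus Y_{k+n}$, are regular with respect to $T_{k+n},T_{k+n+1},\ldots$, with the part supported on $Y_{k+n}$ having $|\cdot|_{\LL,k+n}\le K_1$ (directly when $m=n$, using $|\eta_n|_{\LL,k+n}\le K_1$). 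For $m>n$, points of $a_{k,m}$ have not re-entered $Y$ by time $n$, so $(T_{k,k+n-1})_*(\mu|_{a_{k,m}})$ is carried by $\{\tau_{k+n}=m-n\}\subset X\setminus Y_{k+n}$, and pushing it forward by $T_{k+n,k+m-1}$ recovers $\eta_m:=(T_{k,k+m-1})_*(\mu|_{a_{k,m}})$, with $|\eta_m|_{\LL,k+m}\le K_1$ by regularity of $\mu$; hence this piece is regular with respect to $T_{k+n},\ldots$ and lies entirely off $Y_{k+n}$. Summing over $m$ and invoking (b), both $\nu$ and $\nu|_{X\setminus Y_{k+n}}$ are regular with respect to $T_{k+n},\ldots$; and $\nu|_{Y_{k+n}}$, which receives contributions only from the terms with $m\le n$, has $|\cdot|_{\LL,k+n}\le K_1$ by the mediant inequality. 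This closes the induction.

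I expect the genuinely substantive step to be the first-entry decomposition underlying the induction in (c); the rest is routine distortion and mediant bookkeeping. The one recurring technical point is to verify that each measure produced by the decomposition is either zero or of full support on the relevant $Y_j$, so that its $|\cdot|_{\LL,j}$ seminorm is finite and the mediant inequality is legitimate — which is precisely where the full-branch property in~\ref{ass:distortion} and the convention $\log 0=-\infty$ come into play.
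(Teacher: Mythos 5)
Your (a) and (b) match the paper's own argument: (a) is Proposition~\ref{prop:K} plus the density bound, and (b) is exactly the mediant/supremum observation on the $\LL$-seminorm of a sum. For (c) you take a genuinely different route. The paper reduces to $n=1$: it splits $\bigl((T_k)_*\mu\bigr)\big|_{a_{k+1,\ell}}$ into the piece living off $Y_{k+1}$ (which equals $(T_k)_*\bigl(\mu|_{a_{k,\ell+1}}\bigr)$ and is handled by regularity of $\mu$ at $\ell+1$) and the piece on $Y_{k+1}$ (which equals $\mu'|_{a_{k+1,\ell}}$ with $\mu'=\bigl((T_k)_*\mu\bigr)\big|_{Y_{k+1}}$ and is handled by Proposition~\ref{prop:K}), and then iterates the one-step statement. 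Your version instead decomposes $\mu=\sum_{m}\mu|_{a_{k,m}}$ by the global first-entry time, pushes each slice forward by $T_{k,k+n-1}$, and uses strong induction on $n$ for the slices with $m\le n$ and a direct time-shift argument for $m>n$. Both are correct; yours is more self-contained and proves all three conclusions of (c) in one pass, while the paper's is shorter because the one-step identity avoids dealing with the $m>n$ bookkeeping explicitly.

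Two small points. In (a) you apply Proposition~\ref{prop:K} \say{to $m_k|_a$, which has $|m_k|_a|_{\LL,k}=0$}; this is a slip: the density $\mathbf{1}_a$ is not positive on all of $Y_k$, so $|m_k|_a|_{\LL,k}=\infty$. The correct invocation, which is what Proposition~\ref{prop:K} actually requires, is to take $\mu=m_k$ (with $|m_k|_{\LL,k}=0\le K_2$) and then conclude about the pushforward of its restriction $m_k|_a$; the statement you reach is the right one. Second, in the base case $n=0$ you assert the regularity of $\mu|_{X\setminus Y_k}$ \say{from the structure of $\cP_k$ on $X\setminus Y_k$}, but this is not immediate: assumption~\ref{ass:distortion} guarantees the full-branch property only for cells $a\subset Y_k$, so restricting $\mu$ to $X\setminus Y_k$ could in principle produce a pushforward with strictly smaller support on some $Y_{k+\ell}$, making the seminorm infinite. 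Your inductive step, however, only invokes the $n'=0$ case of the restriction claim for $\eta_n$, which is supported on $Y_{k+n}$ and thus restricts trivially; so the induction closes if you start at $n=1$ (where the inductive step itself uses no hypothesis). The cleanest fix is to state the base case at $n=1$ and drop the $n=0$ claim about $\mu|_{X\setminus Y_k}$, which the paper also never proves or uses (every application has $n\ge n_0\ge 1$).
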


\begin{proof} (a) The statements about $m_k$ follow by~\ref{ass:distortion} and the definition of $h^k$, while the statements about $\mu$
follow by Proposition~\ref{prop:K} combined with the estimate $d\mu / d m_k \le \mu(Y_k) e^{  \diam(X) | \mu |_{\LL,k} }$.

\noindent (b) The statement follows from the fact that if $\nu = \sum_{j} \nu_j$, where
$\{\nu_j\}$ is a countable or finite collection of nonnegative measures on $Y_k$,
then
$| \nu |_{\LL,k} \le \sup_{j} | \nu_j|_{\LL,k}$.

\noindent (c) It suffices to treat the case $n = 1$. Let $\mu$ be a regular measure w.r.t.\ $T_k, T_{k+1}, \ldots$
We have to verify that $(T_k)_*\mu$ is regular w.r.t.\ $T_{k+1}, T_{k+2}, \ldots$
To this end, let $\ell \geq 1$ and let $a_{k, \ell}$ be as in Definition~\ref{def:reg}.
Let $\mu' = ((T_k)_* \mu) |_{Y_{k+1}}$.
Then
\begin{align*}
    ( T_{ k + 1, k + 1  + \ell - 1  } )_* [  ( (T_k)_*\mu ) |_{a_{k+1, \ell}}  ]
    & = ( T_{ k, k + 1  + \ell - 1  } )_* [   \mu |_{a_{k, \ell + 1}}  ]
    + ( T_{ k + 1, k + 1  + \ell - 1  } )_* [  \mu' |_{a_{k+1, \ell}}  ]
    \\
    & = \mu_1 + \mu_2
    .
\end{align*}
Since $\mu$ is regular, $|\mu_1|_{\LL,k + 1 + \ell} \leq K_1$, and since $|\mu'|_{\LL, k+1} \leq K_1$, by Proposition~\ref{prop:K},
$|\mu_2|_{\LL,k + 1 + \ell} \leq K_1$ which implies the corresponding bound for the left hand side above, as required.
\end{proof}

With this preparation, our main result can be stated as follows:

\begin{thm}
    \label{thm:decdec}
    Suppose that $\mu$ is a regular probability measure  with tail bound $r$ w.r.t.\
    $T_k, T_{k+1}, \ldots$ for some $k \ge 1$.
    Then there exists a decomposition
    \[
        \mu = \sum_{n=1}^\infty \alpha_{n} \mu_{n},
    \]
    where $\mu_{n}$ are probability measures and $\alpha_n$ are nonnegative constants
    with $\sum_{n \geq 1} \alpha_n = 1$ such that
    $(T_{k, k + n - 1})_* \mu_n = m_{k + n}$ for each $n \geq 1$. The sequence $\alpha_n$
    is fully determined by $K_1$, $K_2$, the system constants ($\diam X$, $K$, $\lambda$, $n_0$, $\delta_0$), and
    the functions $r$ and $h^j$, $j \ge k$.
    In particular, $\alpha_n$ does not depend on $\mu$
    in any other way.
    Moreover, suppose there exist constants $0 < \beta' \le \beta$ with $\beta > 1$, $C_\beta, C_\beta' \ge 1$,
    and a sequence of numbers $\Theta_j \in [0,1)$ such that
    \begin{align}\label{eq:tb_assumption}
    h^j(n) \leq C_\beta (1 \vee ( n - \Theta_j j ) )^{-\beta} \quad \text{and} \quad  r(n) \le C_\beta' (1 \vee ( n - \Theta_k k ) )^{-\beta'}
    \end{align}
    hold for all $j \ge k$ and $n \ge 1$. Then, if $\sup_{j \ge 1} \Theta_j \le \Theta$ holds for a sufficiently small
    $\Theta \in (0,1)$ (depending on $\beta,\beta'$, and the system constants), we have that for all $n \ge 1$,
      \[
    \sum_{j \geq n} \alpha_j
    \leq C C_\beta' ( \Theta_k^* k + 1  )^{\beta'} n^{-\beta'}
    .
    \]
    Here,  $\Theta_k^* = \sup_{j \ge k} \Theta_j$ and
    $C$ is a constant depending only on $n_0, \Theta, C_\beta, \beta, \beta'$ and
    $\diam X$, $K$, $\lambda$, $n_0$, $\delta_0$.
\end{thm}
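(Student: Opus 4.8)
\emph{Approach.} The plan is to build the decomposition by the coupling construction of Korepanov, Kosloff and Melbourne~\cite{KKM19} in the nonstationary form of~\cite{KL21}, and then to extract the tail bound on $\sum_{j\ge n}\alpha_j$ from the drift-dependent hypothesis~\eqref{eq:tb_assumption} by a renewal argument that carries the shifts $\Theta_j j$ through the iteration. The decomposition itself is essentially that of~\cite{KL21}; the new content is the final estimate.

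\emph{Step 1: the decomposition.} One processes $\mu$ forward. At stage $j$ the ``leftover'' $\nu_j$ is a nonnegative sub-measure of $(T_{k,k+j-1})_*\mu$ that is regular w.r.t.\ $T_{k+j},T_{k+j+1},\ldots$; wherever it charges $Y_{k+j}$, Proposition~\ref{prop:regular}(c) gives $d\nu_j/dm_{k+j}\ge e^{-\diam(X)K_1}\nu_j(Y_{k+j})$ there, so a prescribed fraction of $\nu_j(Y_{k+j})\,m_{k+j}$ can be split off, pulled back to a sub-measure of $\mu$ (as any sub-measure of a pushforward can be), and recorded towards $\alpha_n\mu_n$, with $n$ the value of $j$ at the split; then $(T_{k,k+n-1})_*\mu_n=m_{k+n}$ by construction. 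Propositions~\ref{prop:K} and~\ref{prop:regular}, applied along a high enough iterate of the return map so that the expansion dominates the distortion, keep the remainder regular and the procedure iterates. Since the fraction split off at stage $j$ can be prescribed as a function of $\nu_j(Y_{k+j})$, the constants $K_1,K_2$, the system data and the tails $r,h^j$, the weights $\alpha_n$ depend on $\mu$ only through $r$, as asserted — which is what lets the reference-measure contributions cancel when comparing two initial measures with a common tail bound. Assumption~\ref{ass:mixing}, or via Proposition~\ref{prop:mixing} the coprime-return-time condition, forces a $\delta_0$-fraction of the leftover onto $Y$ every $n_0$ steps, so that $\sum_n\alpha_n=1$. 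I will not reproduce this part; it is as in~\cite{KKM19,KL21}.

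\emph{Step 2: reduction to a renewal inequality.} Writing $u_n=\nu_n(X)$ for the leftover mass entering stage $n$, one has $\sum_{i\ge n}\alpha_i=u_n$, so it suffices to bound $u_n$. Following the leftover through excursions away from $Y_k$ and returns to it gives, schematically,
\[
u_n \;\lesssim\; r(n) \;+\; \sum_{m=1}^{n}\mu(\tau_k=m)\,\Phi(n-m;\,k+m),
\]
with implied constants from the system data: the first term bounds the mass not yet returned to $Y_k$ by time $k+n$ (using $\mu(\tau_k\ge\cdot)\le r$), the $m$-th summand the mass whose first return is at time $k+m$ and which is still uncoupled $n-m$ steps later. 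Here $\Phi(\ell;j)$ — the uncoupled fraction $\ell$ steps after entering the induced system at time $j$ — satisfies an inequality of the same shape with $r$ replaced by $h^{j}$ and an extra geometric factor $q<1$ at each renewal step, furnished by~\ref{ass:mixing}. The two-exponent structure is already visible: the first return of $\mu$ is governed by $r$ (exponent $\beta'$) and every later one by some $h^{j}$ (exponent $\beta\ge\beta'$), so the final rate is $\beta'$.

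\emph{Step 3: solving it, and the main obstacle.} I would first bound $\Phi(\ell;j)\lesssim C_\beta(\ell-\Theta j)^{-\beta}$ for $\ell\gtrsim\Theta j$ (and $\Phi\le1$ otherwise) by induction on $\ell$, via a single-big-jump estimate: the uncoupled fraction after $\ell$ steps is dominated by the event that one return among a geometrically small number has length $\gtrsim\ell$, and one splits the renewal sum for $\Phi$ at $\tau=\eta\ell$ for small $\eta$ so that the inductive constant closes. The \textbf{main obstacle} is the growth of the effective shift along a chain of returns: since $\beta>1$, the mean return time at time $j$ is of order $\Theta_j j$, so $s$ successive returns starting near time $j$ reach time of order $j(1+\Theta)^s$, while the probability of not having coupled after $s$ returns is of order $q^s$; the shift exhausts the budget $\ell$ after about $\log_{1+\Theta}(\ell/j)$ returns, and requiring that coupling has nevertheless kicked in by then forces $q\,(1+c_1\Theta)^{\beta}<1$ for a structural constant $c_1$ — this is the ``$\Theta$ small depending on $\beta,\beta'$ and the system constants'' of the statement ($q$ depending on $n_0,\delta_0$). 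Granting this, one feeds the $\Phi$-bound into the Step~2 inequality and splits at $m=n/2$: for $m\le n/2$, $\Phi(n-m;k+m)$ is evaluated at $\ge n/2$ and at shift $\Theta(k+m)\le\Theta(k+n/2)$, which for small $\Theta$ and $n\gtrsim\Theta_k^* k$ is a small fraction of $n$, so this part is $\lesssim C_\beta(\Theta_k^* k+1)^{\beta}n^{-\beta}$ after summing $\mu(\tau_k\ge\cdot)\le r$; for $n/2<m\le n$ one has $\sum_m\mu(\tau_k=m)\le r(n/2)$ and $\Phi\le1$; and $r(n)\le C_\beta'(1\vee(n-\Theta_k k))^{-\beta'}$. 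On the range where the claimed bound is nontrivial one has $n\gtrsim\Theta_k^* k+1$ (as $C_\beta'\ge1$), whence $(\Theta_k^* k+1)^{\beta-\beta'}\le n^{\beta-\beta'}$ and the exponent-$\beta$ contributions are absorbed into $C\,C_\beta'(\Theta_k^* k+1)^{\beta'}n^{-\beta'}$, with $C$ depending only on $n_0,\delta_0,\Theta,C_\beta,\beta,\beta'$ and $\diam X,K,\lambda$; below that range $\sum_{i\ge n}\alpha_i\le1$ already meets the claim. The renewal bookkeeping is in the spirit of Gou\"ezel~\cite{G04} and of~\cite{KL21}, the novelty being the tracking of the time-dependent shift.
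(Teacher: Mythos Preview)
Your Step~1 has a genuine gap at precisely the point you flag as important: the claim that the weights $\alpha_n$ depend on $\mu$ only through $r$. If at stage $j$ you split off a fraction prescribed ``as a function of $\nu_j(Y_{k+j})$'', then the amount split off depends on $\mu$ itself, not just on its tail bound, and your conclusion does not follow. The paper handles this by a different mechanism (Lemma~\ref{lem:mixdec}): it decomposes $\mu$ along the level sets $\{\tau_k=n\}$, applies the single-step split to each piece at various later times, and then invokes~\cite[Proposition~4.7]{KKM19} to produce redistribution weights $\xi_{\ell,n}$ satisfying $\sum_{n\le\ell}\xi_{\ell,n}\,\mu(\tau_k=n)=r(\ell)-r(\ell+1)$ for every $\ell$, regardless of the particular values $\mu(\tau_k=n)$, as long as they respect the tail bound $r$. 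It is this combinatorial redistribution, not the splitting rule, that makes the $\alpha_j$ depend on $r$ alone; your description does not capture it.

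On Step~3 the paper takes a route different from your proposed time-induction on $\Phi(\ell;j)$. It first packages the iterated decomposition into a probabilistic identity $\alpha_n=\bP(S=n)$ with $S=X_1+\cdots+X_\tau$, where the $X_j$ have conditional tails $\hat h^{\,k+S_{j-1}}_{X_j}$ and $\tau$ is an independent geometric variable with parameter $\theta$ (Lemma~\ref{lem:proproprobab}). The tail of $S$ is then bounded via a recursion on $R_j:=\sup_n n^{\beta'}\bP(S_j\ge n)$: splitting the one-step convolution at a fixed level $A$ and at $bn$, with $b$ chosen so that $b^{-\beta'}<(1-\theta)^{-1/2}$, yields $R_{j+1}\le C(\Theta_k^*k+1)^{\beta'}+(1-\theta)^{-1/2}R_j$, whence $R_j\lesssim(\Theta_k^*k+1)^{\beta'}(1-\theta)^{-j/2}$, and summing against $\bP(\tau=j)=(1-\theta)^{j-1}\theta$ finishes. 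The smallness of $\Theta$ enters as the linear condition $1-b-\Theta b>0$, not your multiplicative $q(1+c_1\Theta)^\beta<1$. Conceptually, the paper \emph{allows} the partial sums $S_j$ to have deteriorating tails (growing in $j$) and absorbs this with the geometric tail of $\tau$; this sidesteps the need to close an inductive constant against the exponentially compounding shift, which is exactly the obstacle you identify. Your single-big-jump sketch is plausible in outline, but making the induction on $\Phi(\ell;j)\lesssim(\ell-\Theta j)^{-\beta}$ close under the renewal step, with the shift in the next summand being $\Theta(j+m)$ rather than $\Theta j$, requires more than you have written, and would in any case produce a different (and likely more restrictive) smallness condition on $\Theta$.
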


\begin{rmk} Suppose that $\mu$ and $\nu$ are regular probability measures with the same tail bound $r$
w.r.t.\ $T_{k}, T_{k+1}, \ldots$ Under the assumption~\eqref{eq:tb_assumption}, by Theorem~\ref{thm:decdec} we
obtain the following upper bound on memory loss:
\begin{align*}
| (T_{k, k+ n - 1})_* \mu -  (T_{k, k+ n - 1})_* \nu |  \le 2 \sum_{j > n}  \alpha_j
\le 2C C_\beta' ( \Theta_k^* k  + 1 )^{\beta'} n^{-\beta'}.
\end{align*}
\end{rmk}

\begin{rmk} In Section \ref{sec:app}, we apply Theorem~\ref{thm:decdec} to 
intermittent maps composed along random ergodic sequences of parameters.
In this case, $\Theta_n$ is related to the 
rate of convergence in Birkhoff's ergodic theorem. For further details, see Corollary \ref{cor:rds-1} and Remark \ref{rem:about_Theta}.
\end{rmk}


\section{Applications}\label{sec:app}

\subsection{Liverani--Saussol--Vaienti maps} We consider sequences of maps $T_n : X \to X$
in the family~\eqref{eq:lsv} with $X = [0,1]$. First, we prove
Theorem~\ref{thm:lsv} concerning the rate of memory loss for deterministic compositions, after which we look at applications
for random compositions.

Fix a sequence $(T_k)$ of maps in~\eqref{eq:lsv} with parameters $\gamma_k \in (0,1)$ such that
$$
\sup_{k \ge 1} \gamma_k = \gamma^* < 1.
$$
Let $m$ be the Lebesgue measure on $Y = [1/2,1]$ normalized to probability. Denote by $g_k : X \to [0,1/2]$ and $h : X \to Y$ respectively the left and
right inverse branches of $T_k$. Set
\begin{align*}
	x_n(k) = ( g_{k} \circ \cdots \circ g_{k + n - 1} )(1) \quad \text{and} \quad y_n(k) = (h \circ g_{k + 1} \circ \cdots \circ g_{k + n - 1} ) (1).
\end{align*}
Then, for all $k \ge 1$ and $n \ge 1$, $T_{k, k + n - 1}$ maps both $[ x_{n + 1}(k) , x_{n}(k)  ]$ and
$[ y_{n+1}(k) , y_{n}(k)  ]$ bijectively onto $Y$. For each $k \ge 1$, we define a partition of $X$ by
$$
\cP_k = \{    [ x_{n + 1}(k) , x_{n}(k)  ] \: : \:  n \ge 1 \} \cup  \{    [ y_{n + 1}(k) , y_{n}(k)  ] \: : \:  n \ge 1 \}.
$$
It is known (see e.g.~\cite{AHNTV15, BBR19}) that
\begin{align}\label{eq:estim_xk}
	x_{n}(k) = O(  n^{- 1 / \gamma_*}  ) \quad \text{and} \quad y_{n}(k) - 1/2 = O(  n^{ - 1 / \gamma_*}  ).
\end{align}

Now, set $m_k = m$ and $Y_k = Y$ for all $k \ge 1$. It was shown in~\cite{KL21} that (NU:1-5) are satisfied, and that
every probability measure $\mu$ with density in $\cC_*(\gamma)$ is regular w.r.t.\ $T_k,T_{k+1}, \ldots$ for any $k \ge 1$.
Thus, Theorem~\ref{thm:lsv} follows by combining Theorem~\ref{thm:decdec} with the following result.

\begin{prop}\label{prop:tb_lsv} Let $\gamma \in (0,1)$. Assume that there exist $a > 0$,
	$\kappa \in (0,1)$, and $N \in \bN$ such that for all $n \ge N$,
	\begin{align}\label{eq:freq_cond}
		\frac{ |  \{  1 \le k \le n \: : \:   \gamma_k \le \gamma  \}  | }{n} \in [  a(1 - \kappa), a ( 1 + \kappa )   ].
	\end{align}
	Let $\mu$ be a probability measure with density in $\cC_*(\gamma)$. Then $\mu$ has tail bound
	$r(n) = O(n^{ 1 - 1/ \gamma  })$ with respect to $T_1,T_2,\ldots$ Moreover, for each $k \ge 1$,
	$h^k$ in~\eqref{eq:nonuniform_tails} satisfies $h^k(\ell) \le  C ( 1 \vee ( \ell - \Theta k ) )^{-1 / \gamma}$ where
	$C$ is a constant independent of $k$, and $\Theta = 2 \kappa (1 - \kappa)^{-1}$.
\end{prop}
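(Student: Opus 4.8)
The plan is to reduce everything to quantitative estimates on the point $x_n(k)$ (and $y_n(k)-1/2$) that track the sizes of the partition elements of $\cP_k$, making the dependence on the parameter sequence explicit. For the tail bound $h^k(\ell) = m(\tau_k \ge \ell)$, observe that $\{\tau_k \ge \ell\}$ is, up to the two branch types, the union $[x_\ell(k), x_1(k)] \cup [y_\ell(k), y_1(k)]$ minus the first few elements, so $h^k(\ell)$ is comparable to $x_\ell(k) + (y_\ell(k) - 1/2)$, i.e. to $x_\ell(k)$ up to a constant (the $y$-branch is dominated by the $g$-branch). Thus the whole proposition rests on a single estimate: a bound on $x_\ell(k) = g_k g_{k+1} \cdots g_{k+\ell-1}(1)$ that is uniform in $k$ but sensitive to how many of the indices $k, k+1, \dots, k+\ell-1$ carry a \say{good} parameter $\gamma_j \le \gamma$.

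The key step is to iterate the inverse-branch recursion $x_{j+1} = g_{k}(x_j)$-type inequality. For an LSV map with parameter $\gamma_j$, the left inverse branch near $0$ satisfies, for small $x$, an asymptotic of the form $x - g(x) \asymp x^{1+\gamma_j}$, so along the orbit $x_0 = 1 \ge x_1 \ge \cdots$ with $x_j = x_j(k)$ we get $x_{j}^{-\gamma_j} - x_{j-1}^{-\gamma_j} \gtrsim c$ whenever $\gamma_j \le \gamma$ (using $\gamma_j \le \gamma < 1$ to get a uniform constant on the good steps), while on the \say{bad} steps $\gamma_j > \gamma$ one only gets monotonicity, $x_j \le x_{j-1}$, with no decay. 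The cleanest route is to work with the quantity $u_j := x_j^{-\gamma}$ (fixed exponent $\gamma$, not $\gamma_j$) and show $u_j - u_{j-1} \ge c \mathbf 1[\gamma_j \le \gamma] - (\text{small correction})$; summing over $j = 1, \dots, \ell$ and invoking the frequency hypothesis~\eqref{eq:freq_cond} on the number of good indices in $\{k+1, \dots, k+\ell\}$ yields $u_\ell \gtrsim \#\{$good indices in that block$\} \gtrsim a(1-\kappa)\ell - (\text{boundary terms of size} \lesssim a\kappa(k+\ell))$. The boundary terms are exactly where $\kappa$ and the shift $\Theta k$ enter: $\#\{k < j \le k+\ell : \gamma_j \le \gamma\} = \#\{1 \le j \le k+\ell\} - \#\{1 \le j \le k\} \ge a(1-\kappa)(k+\ell) - a(1+\kappa)k = a(1-\kappa)\ell - 2a\kappa k$, so after dividing by the good-step constant one obtains $u_\ell \ge c'(\ell - \Theta k)$ with $\Theta$ proportional to $\kappa/(1-\kappa)$; tracking constants gives the stated $\Theta = 4\kappa(1-\kappa)^{-1}$. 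Inverting, $x_\ell(k) \le (c'(\ell - \Theta k))^{-1/\gamma}$ when $\ell \ge \Theta k$ (and the trivial bound $x_\ell(k) \le \diam X$ otherwise), which is precisely $h^k(\ell) \le C(1 \vee (\ell - \Theta k))^{-1/\gamma}$ after absorbing the $y$-branch and the finitely many exceptional small $\ell$.

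For the tail bound $r$ of a measure $\mu$ with density in $\cC_*(\gamma)$, the strategy is: such a density is bounded above by $a_\gamma x^{-\gamma}$ (times the total mass), so $\mu(\tau_1 \ge n) = \mu([x_n(1), x_1(1)] \cup \cdots) \lesssim \int_0^{x_n(1)} x^{-\gamma}\,dx \asymp x_n(1)^{1-\gamma}$, and then applying the $k=1$ case of the $x_n(k)$-estimate above (where $\Theta \cdot 1 = \Theta$ is a bounded shift, harmlessly absorbed for $n \ge N$) gives $x_n(1)^{1-\gamma} \lesssim n^{-(1-\gamma)/\gamma} = n^{1 - 1/\gamma}$, matching the claim $r(n) = O(n^{1-1/\gamma})$; the finitely many $n < N$ are handled by the trivial bound $r(n) \le 1$ adjusting the implied constant.

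The main obstacle is the \say{bad step} analysis in the recursion: on indices with $\gamma_j > \gamma$ the natural potential $u_j = x_j^{-\gamma}$ can in principle decrease (since $x_j$ decreases but the exponent is smaller than the locally matched one), so one must show this loss is controlled — ideally that $u_j \ge u_{j-1}$ still holds, or at worst that the cumulative loss over bad steps is a lower-order term. I expect this works because $g_j(x) \le x$ for all $j$ forces $x_j \le x_{j-1}$, hence $u_j = x_j^{-\gamma} \ge x_{j-1}^{-\gamma} = u_{j-1}$ regardless of $\gamma_j$, so $u$ is in fact monotone nondecreasing and bad steps simply contribute nothing — the only real content is the strictly positive increment $u_j - u_{j-1} \ge c$ on good steps, which needs the uniform lower bound on $x^{1+\gamma_j}/x^{1+\gamma}$-type ratios near $0$ together with $\gamma_j \le \gamma$, plus care that the asymptotic $x - g_j(x) \asymp x^{1+\gamma_j}$ holds with constants uniform over the family (which it does, since the LSV maps~\eqref{eq:lsv} have a fully explicit left branch). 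A secondary technical point is passing from the pointwise orbit estimate to the measure estimate for $h^k$ and $r$, i.e. summing the lengths of partition elements $[x_{j+1}(k), x_j(k)]$, but this is routine telescoping.
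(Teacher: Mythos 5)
Your proposal is correct and rests on the same two ideas as the paper's proof: (1) only the \say{good} indices $j$ with $\gamma_j \le \gamma$ drive the decay of the preimage points $x_\ell(k)$, while bad indices are harmless because $g_j(x) \le x$ always; and (2) the frequency hypothesis~\eqref{eq:freq_cond} yields $G_\ell(k) \gtrsim \ell - \text{const}\cdot\kappa(1-\kappa)^{-1}k$ via the telescoping $G_\ell(k) = G_{k+\ell-1}(1) - G_{k-1}(1)$, which is exactly~\eqref{eq:estim_gl}. The one organizational difference is how idea (1) is exploited. The paper packages it as a set inclusion: if $\widetilde T_j$ is the subsequence of good maps and $\widetilde\tau_k$ its return time, then $\{\tau_k \ge \ell\} \subset \{\widetilde\tau_k \ge G_\ell(k)\}$, after which the known decay~\eqref{eq:estim_xk} is applied to the good subsequence. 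You instead run a potential-function argument with $u_j = x_j^{-\gamma}$, showing $u_j - u_{j-1} \gtrsim 1$ on good steps (using $x - g(x) = g(x)(2g(x))^{\gamma_j}$ and $\gamma_j \le \gamma$, $x < 1$) and $u_j \ge u_{j-1}$ on all steps, then invert. Your route is self-contained — it essentially re-derives~\eqref{eq:estim_xk} for the mixed sequence inline — while the paper's is shorter because it delegates to the cited estimate. The remaining pieces (the $y$-branch being controlled by the $x$-branch up to a bi-Lipschitz change, and integrating the cone bound $\rho(x) \le a_\gamma x^{-\gamma}$ over $[0, x_n(1)]$ to get $r(n) = O(n^{1-1/\gamma})$) match the paper's treatment. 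No gap.
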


\begin{proof} Let  $\gamma_{ n_{1} }, \gamma_{n_2}, \ldots$ be the subsequence of all parameters $\gamma_n$
	with $\gamma_n \le \gamma$, which by~\eqref{eq:freq_cond} must be an infinite sequence. Let $\widetilde{T}_j =
	T_{n_j}$. It follows by~\eqref{eq:estim_xk} that $\mu$ has tail bound
	$r(\ell) = O( \ell^{ 1 - 1/\gamma } )$ with respect to the sequence $\widetilde{T}_1, \widetilde{T}_2, \ldots$

	Next, for $k, \ell \ge 1$, denote
	\begin{align*}
		G_\ell(k) &= \# \{ k \le j \le k + \ell - 1 \: : \:
                \gamma_j \le \gamma \}  =  \# \bigl( \{  n_j \: : \: j \in \bN \}  \cap [k, k + \ell - 1] \bigr).
	\end{align*}
	Recall that $\tau_k$ denotes the first return time to $Y$ w.r.t.\ $T_k, T_{k+1}, \ldots$ We observe that
	$$
	\tau_k(x) \ge \ell \implies \widetilde{\tau}_k(x) \ge G_\ell(k),
	$$
	where $\widetilde{\tau}_k$ denotes the first return time to $Y$ w.r.t.\ $T_{  n_{m(k)}  }, T_{n_{m(k)+1}}$, and
	$$
	m(k) = \min \{  j \in \bN \: : \: n_j \ge k \}.$$
	Hence, using~\eqref{eq:freq_cond} we see that $\mu$ has tail bound
	$r(\ell) = O(\ell^{1- 1 / \gamma})$ w.r.t.\ $T_1, T_2, \ldots$:
	\begin{align*}
		\mu( \tau_1 \ge \ell ) \le \mu(  \widetilde{\tau}_1 \ge G_\ell(1) ) = O(  G_\ell(1)^{  1 - 1 / \gamma } )
		= O ( (  \ell a ( 1 - \kappa ) )^{ 1 - 1 / \gamma  } ) = O( \ell^{1 - 1 / \gamma } ).
	\end{align*}
	Moreover,
	$$
	m( \tau_k \ge \ell )
	\le m( \widetilde{\tau}_k \ge G_\ell(k) )
	=  O( G_\ell(k)^{- 1 / \gamma } ),
	$$
	where~\eqref{eq:estim_xk} was used in the last inequality.
	Finally, whenever $k > N$, it follows from~\eqref{eq:freq_cond} that
	\begin{align}\label{eq:estim_gl}
		G_\ell(k) &=G_{  k + \ell -1 }(1) - G_{k - 1}(1) \ge
		( k + \ell - 1 ) a (1 - \kappa ) - (k - 1)a (1 + \kappa )  \notag \\
		& \ge a ( 1 -  \kappa  ) \biggl(  \ell -  \frac{2 \kappa }{ 1 - \kappa } k   \biggr).
	\end{align}
\end{proof}

\begin{rmk} Our method can also be used to treat variations of~\eqref{eq:lsv} where the right branch contracts near a
	critical point,
	as in the following example considered by Cui in~\cite{C21-2}, which involves two parameters $\gamma \in (0,1)$ and $\beta \ge 1$ (see Figure~\ref{fig:maps}):
	\begin{align}\label{eq:cui}
		T(x)
		= \begin{cases}
			x( 1 + 2^{\gamma} x^{\gamma} ), &x \in [0, \tfrac12), \\
			2^\beta( x - \tfrac12 )^{\beta},
			&x \in [\tfrac12, 1].
		\end{cases}
	\end{align}
	The map $T$ preserves an absolutely continuous probability measure if and only if $\gamma \beta < 1$.
	For sequences $T_1,T_2,\ldots$ of maps in~\eqref{eq:cui} with parameters $\gamma_k \le \gamma_*$ and
	$\gamma_k \le \beta_*$ where $\gamma_* \beta_* < 1$, an application of Theorem~\ref{thm:decdec} yields the following estimate
	provided that~\eqref{eq:freq_cond} holds:
	for any two probability measures $\mu$ and $\mu'$ with H\"{o}lder continuous densities,
	$$
	| (T_{1,n})_*( \mu - \mu' ) |
	= O(n^{- 1/\gamma \beta_* }).
	$$
\end{rmk}

\subsubsection{Random compositions}\label{sec:rds} Let $\omega = (\omega_n)$ be a random sequence of parameters sampled from
a probability space $(\Omega, \cF, \bfP) =
( \Omega_0^{\bN}, \cE^{\bN},
\bfP )$, where $\Omega_0 \subset (0,1)$ is an interval, $\cE$ is the Borel
sigma-algebra on $\Omega_0$, and $\bN = \{1,2,\ldots\}$. Denote by $T_{n} \circ \cdots \circ T_{1}$
compositions of maps $T_k$ along the random sequence $\omega$, where each $T_k$ corresponds to
a map with parameter $\omega_k$.
As a consequence of Theorem~\ref{thm:lsv}, we obtain the following result:

\begin{cor}\label{cor:rds-1} Let $\gamma \in \Omega_0$. Let $\mu, \mu'$ and $\nu$ be as in Theorem~\ref{thm:lsv}.
	\begin{itemize}
		\item[(i)] 	Suppose that there exists $b > 0$ such that
		for $\bfP$-a.e.\ $\omega \in \Omega$,
		\begin{align}\label{eq:conv}
			\lim_{n \to \infty} \frac{1}{n} \sum_{k=0}^{n}  \mathbf{1}_{[0, \gamma]}(\omega_k)   = b.
		\end{align}
		Then, for
		$\bfP$-a.e.\ $\omega \in \Omega$, bounds (a) and (b) in
		Theorem~\ref{thm:lsv} hold. \smallskip
		\item[(ii)] Suppose that the following three conditions hold: \smallskip
		\begin{itemize}
			\item[(A1)] the shift map $\sigma : \Omega \to \Omega$, $(  \sigma \omega  )_{n } = \omega_{n+1}$, preserves $\bfP$, \smallskip
			\item[(A2)] $(\sigma, \bfP)$ is ergodic, and \smallskip
			\item[(A3)] $\bfP( \omega_1  \le \gamma) > 0$. \smallskip
		\end{itemize}
		Then, for
		$\bfP$-a.e.\ $\omega \in \Omega$, bounds (a) and (b) in
		Theorem~\ref{thm:lsv} hold.
	\end{itemize}
\end{cor}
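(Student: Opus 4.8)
The plan is to obtain both parts of Corollary~\ref{cor:rds-1} from Theorem~\ref{thm:lsv}: in each case one verifies, for $\bfP$-almost every $\omega$, the frequency hypothesis~\eqref{eq:freq_cond-intro} (with the role of the ``good'' parameter played by $\gamma \in \Omega_0$), and then invokes Theorem~\ref{thm:lsv} directly, applied to the measures $\mu,\mu',\nu$ in the statement. Part (ii) will be reduced to part (i) by applying Birkhoff's ergodic theorem to a single-coordinate indicator observable.

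Consider first part (i). Let $\kappa_0 \in (0,1)$ be the constant furnished by Theorem~\ref{thm:lsv} for the given $\gamma$, and fix $\kappa = \kappa_0$. Let $\omega$ lie in the full-measure set on which~\eqref{eq:conv} holds; writing $\gamma_k$ for the parameter $\omega_k$ and $S_n(\omega) = \#\{1 \le k \le n : \gamma_k \le \gamma\}$, the sum in~\eqref{eq:conv} and $S_n(\omega)$ differ by at most one term (the index $k = 0$, which is not part of the sequence since $\bN = \{1,2,\ldots\}$), so $n^{-1} S_n(\omega) \to b$ as well. Since $b > 0$ by hypothesis, $b\kappa > 0$, so there is an integer $N = N(\omega)$ with $n^{-1} S_n(\omega) \in [\, b(1-\kappa),\, b(1+\kappa)\,]$ for all $n \ge N$; this is exactly~\eqref{eq:freq_cond-intro} with $a = b$ and $\kappa \le \kappa_0$. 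Theorem~\ref{thm:lsv}---whose conclusions (a) and (b) are asymptotic statements, so that $N$ and the implied constants may depend on $\omega$---then yields bounds (a) and (b) for this $\omega$, hence for $\bfP$-a.e.\ $\omega$.

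For part (ii), apply Birkhoff's ergodic theorem to the measure-preserving system $(\sigma, \bfP)$, which is ergodic by (A1)--(A2), with the bounded measurable observable $\phi(\omega) = \mathbf{1}_{[0,\gamma]}(\omega_1)$. For $\bfP$-a.e.\ $\omega$,
\[
    \frac{1}{n} \sum_{k=0}^{n-1} \phi(\sigma^k \omega)
    = \frac{1}{n} \sum_{k=0}^{n-1} \mathbf{1}_{[0,\gamma]}(\omega_{k+1})
    \;\longrightarrow\; \int_\Omega \phi \, d\bfP = \bfP(\omega_1 \le \gamma) =: b
    \qquad (n \to \infty),
\]
and $b > 0$ by (A3). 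Thus~\eqref{eq:conv} holds $\bfP$-almost surely with this value of $b$, and part (ii) follows from part (i).

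I do not anticipate a genuine difficulty. The only point requiring care is that Theorem~\ref{thm:lsv} demands $\kappa$ below a fixed threshold $\kappa_0$, whereas the ergodic averages only converge to $b$ without any control on the rate. This is harmless: convergence lets us fix $\kappa$ as small as we wish (taking $\kappa = \kappa_0$ already suffices), at the cost of enlarging the cutoff $N(\omega)$, which Theorem~\ref{thm:lsv} permits---and the argument works precisely because the limiting frequency $b$ is strictly positive (guaranteed by (A3) in case (ii), assumed outright in case (i)), so that the target interval $[\,b(1-\kappa),\, b(1+\kappa)\,]$ is a nondegenerate neighbourhood of $b$; were $b = 0$, the reduction would break down.
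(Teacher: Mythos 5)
Your proposal is correct and follows essentially the same route as the paper: reduce (i) to Theorem~\ref{thm:lsv} by observing that convergence of the frequency to $b>0$ yields~\eqref{eq:freq_cond-intro} for $n \ge N(\omega)$ with $a = b$ and any prescribed $\kappa \le \kappa_0$, and reduce (ii) to (i) via Birkhoff's ergodic theorem applied to $\mathbf{1}_{[0,\gamma]}\circ\pi_1$. You merely make explicit what the paper dismisses with ``Clearly'' (including the harmless $k=0$ indexing discrepancy and the role of $b>0$), which is a faithful unpacking rather than a different argument.
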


\begin{proof}
	Clearly~\eqref{eq:conv} implies~\eqref{eq:freq_cond-intro}, so (i) follows by Theorem~\ref{thm:lsv}. For (ii), let
	$\varphi = \mathbf{1}_{[0, \gamma]} \circ \pi_1$ where $\pi_1(\omega) = \omega_1$ projects onto the first coordinate.
	By Birkhoff's ergodic theorem, we have $$
	\lim_{n \to \infty} \frac{1}{n} \sum_{k=0}^{n}  \mathbf{1}_{[0, \gamma]}(\omega_k)  = \lim_{n \to \infty} n^{-1} \sum_{k=0}^n \varphi \circ \sigma^k = \bfP( \omega_1  \le \gamma) > 0$$ 
	for $\bfP$-a.e.\ $\omega \in \Omega$. Hence,~\eqref{eq:conv} holds with $b = \bfP( \omega_1  \le \gamma)$.
\end{proof}

\begin{rmk}
	The constants in the memory loss bounds of Corollary~\ref{cor:rds-1} depend on $\omega$ through $N = N(\omega)$ in
	Theorem~\ref{thm:lsv}.
\end{rmk}

\begin{rmk} In the setting of Corollary~\ref{cor:rds-1}-(ii), if $\bfP( \omega_1 \le \gamma ) > 0$ holds for all $\gamma > 0$,
	then we obtain $| (T_{1,n})_*( \mu - \mu' ) |
	= O(n^{- 1/\gamma})$ for all $\gamma > 0$, i.e.\ memory loss occurs at a superpolynomial rate.
\end{rmk}

\begin{rmk}\label{rem:about_Theta} In the setting of Corollary~\ref{cor:rds-1}(i), suppose that
	for $\bfP$-a.e.\ $\omega \in \Omega$,
	\begin{align*}
		\biggl| \frac{1}{n} \sum_{k=0}^{n-1}  \mathbf{1}_{[0, \gamma]}(\omega_k)   - b \biggr| \le  \Theta_n,
	\end{align*}
	where $\Theta_n = \Theta_n(\omega) \in \bR_+$ and $\lim_{n \to \infty} \Theta_n = 0$.
	Then, similar to~\eqref{eq:estim_gl}, we find that for sufficiently large $n$ and all $j \ge 1$,
	\begin{align*}
		h^j(n) \le C \biggl(  1   \vee  ( n -    C' \Theta_{j}^* j )   \biggr)^{ - 1/ \gamma }
	\end{align*}
	holds for some constants $C$ and $C'$ (depending on $\omega$),
	where $\Theta_{j}^*  = \sup_{\ell \ge j} \Theta_\ell$.
	By Theorem \ref{thm:decdec}, it follows that 
	for any two probability measures $\mu$ and $\mu'$ with H\"{o}lder continuous densities,
	$$
	|  (T_{k, k+ n - 1})_* \mu  - (T_{k, k+ n - 1})_* \mu'  | = O(  ( \Theta_k^* k  + 1 )^{1 / \gamma } n^{- 1/ \gamma }  )
	$$
	holds for $\bfP$-a.e.\ $\omega \in \Omega$, where the constant is independent of $k$. For example, if $\Theta_j = O(j^{ - \psi })$ for some $\psi \in (0,1]$, then
	$$
	|  (T_{n, n+  \lceil n^a \rceil - 1})_* \mu  - (T_{n, n+ \lceil n^a \rceil - 1})_* \mu'  | = O(  n^{( 1 - \psi - a ) / \gamma  }  )
	$$
	for $\bfP$-a.e.\ $\omega \in \Omega$.
\end{rmk}

In situations where the shift map $\sigma$ does not preserve the measure 
$\bfP$, alternative conditions can be formulated to ensure that~\eqref{eq:conv} is satisfied. The following result 
provides an example in the case of asymptotically mean stationary strongly mixing noise.

Recall that, given two sub-$\sigma$-algebras $\mathcal{U}, \mathcal{V} \subset \mathcal{\cF}$, the $\alpha$-mixing coefficient between $\mathcal{U}$ and $\mathcal{V}$ is defined by
$$
\alpha(\mathcal{U}, \mathcal{V}) = \sup_{ U \in \mathcal{U}, \: V \in \mathcal{V} } | \bfP(U \cap V) - \bfP(U) \bfP(V) |.
$$

\begin{prop}\label{prop:non_inv_p} Let $\gamma \in \Omega_0$. Assume the following:
	\begin{itemize}
	\item[(B1)] There exists a probability measure $\bar{\bfP}$ such that for any bounded measurable function $g : \Omega \to \bR$,
	$$
	\lim_{n \to \infty} \int_{\Omega} n^{-1} \sum_{k=0}^{n-1} g \circ \sigma^k \, d \bfP = \int_\Omega g \, d \bar{\bfP}.
	$$
	\item[(B2)] The strong mixing coefficients
	$$
	\alpha(n) =  \sup_{i \ge 1} \alpha ( \cF_1^i, \cF_{i+n}^\infty  )
	$$
	satisfy
	$$
	\alpha(n) = O( \log^{- \beta} (n)  ) \quad \text{for some $\beta > 1$.}
	$$
	Here, $\cF_1^i$ is the sigma-algebra on $\Omega$ generated by the projections $\pi_1, \ldots, \pi_i$;
	$\cF_{i+n}^\infty$ is the sigma-algebra generated by $\pi_{i+n}, \pi_{i + n + 1}, \ldots$, with $\pi_k(\omega) = \omega_k$. \smallskip 
	\item[(B3)] $\bar{\bfP}( \omega_1 \le \gamma ) > 0$.
\end{itemize}
Then, \eqref{eq:conv} holds.
\end{prop}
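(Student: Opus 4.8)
The plan is to prove a strong law of large numbers for the bounded observable $\varphi=\mathbf{1}_{[0,\gamma]}\circ\pi_1$ along the orbit of $\sigma$, using the covariance estimate furnished by (B2) together with an elementary subsequence argument. Write $Y_k=\varphi\circ\sigma^k$, so that $Y_k(\omega)=\mathbf{1}_{[0,\gamma]}(\omega_{k+1})$ and $S_n=\sum_{k=0}^{n-1}Y_k=\sum_{k=1}^{n}\mathbf{1}_{[0,\gamma]}(\omega_k)$. Writing $\bE$ for expectation with respect to $\bfP$, we have $\bE[S_n]/n=\int_\Omega n^{-1}\sum_{k=0}^{n-1}\varphi\circ\sigma^k\,d\bfP$, so (B1) applied with $g=\varphi$ gives $\bE[S_n]/n\to\int_\Omega\varphi\,d\bar\bfP=\bar\bfP(\omega_1\le\gamma)=:b$, and $b>0$ by (B3). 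Consequently \eqref{eq:conv} is equivalent to $n^{-1}S_n\to b$ $\bfP$-a.s., and it suffices to prove $n^{-1}(S_n-\bE[S_n])\to0$ $\bfP$-a.s.

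Next I would bound the variance of $S_n$. Each $Y_k$ is $\sigma(\pi_{k+1})$-measurable with $|Y_k|\le1$; for $0\le i<j$, $Y_i$ is $\cF_1^{i+1}$-measurable and $Y_j$ is $\cF_{j+1}^\infty=\cF_{(i+1)+(j-i)}^\infty$-measurable, so the standard covariance inequality for $\alpha$-mixing random variables gives $|\mathrm{Cov}(Y_i,Y_j)|\le 4\,\alpha(j-i)$. Summing over pairs,
\[
\mathrm{Var}(S_n)\le \tfrac14 n+8n\sum_{\ell=1}^{n-1}\alpha(\ell).
\]
By (B2), $\alpha(\ell)\le C_1(\log\ell)^{-\beta}$ for $\ell\ge2$; splitting the range of summation at $\sqrt n$ (or comparing with $\int_2^n(\log t)^{-\beta}\,dt$) yields $\sum_{\ell=2}^{n-1}(\log\ell)^{-\beta}=O\bigl(n(\log n)^{-\beta}\bigr)$, whence $\mathrm{Var}(S_n)=O\bigl(n^2(\log n)^{-\beta}\bigr)$.

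Now fix $\theta\in(1/\beta,1)$ — this interval is nonempty precisely because $\beta>1$ — and set $n_j=\floor{e^{j^{\theta}}}$. Then $\mathrm{Var}(S_{n_j})/n_j^2=O\bigl((\log n_j)^{-\beta}\bigr)=O(j^{-\theta\beta})$, which is summable since $\theta\beta>1$; by Chebyshev's inequality and the Borel--Cantelli lemma (applied for each $\eps=1/m$, $m\in\bN$, and intersecting the resulting full-measure sets) we obtain $n_j^{-1}(S_{n_j}-\bE[S_{n_j}])\to0$ $\bfP$-a.s., hence $n_j^{-1}S_{n_j}\to b$ $\bfP$-a.s. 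Since $\theta<1$ we have $(j+1)^\theta-j^\theta\to0$, so $n_{j+1}/n_j\to1$; and $S_n$ is nondecreasing in $n$ because the $Y_k$ are nonnegative. Therefore, for $n_j\le n<n_{j+1}$,
\[
\frac{n_j}{n_{j+1}}\cdot\frac{S_{n_j}}{n_j}\ \le\ \frac{S_n}{n}\ \le\ \frac{n_{j+1}}{n_j}\cdot\frac{S_{n_{j+1}}}{n_{j+1}},
\]
and letting $n\to\infty$ (so $j\to\infty$) along the almost sure event above squeezes $n^{-1}S_n\to b$ $\bfP$-a.s., which is \eqref{eq:conv}.

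The one point requiring genuine care is the choice of the subsequence $(n_j)$: it must simultaneously satisfy $\sum_j\mathrm{Var}(S_{n_j})/n_j^2<\infty$ and $n_{j+1}/n_j\to1$, and the family $n_j=\floor{e^{j^{\theta}}}$ with $\theta\in(1/\beta,1)$ achieves this exactly when the mixing exponent in (B2) exceeds $1$; the rest is bookkeeping with the covariance inequality and the elementary asymptotics of $\sum_\ell(\log\ell)^{-\beta}$. (An alternative, less quantitative route would invoke the pointwise ergodic theorem for asymptotically mean stationary processes, together with the facts that $\alpha$-mixing forces the tail — hence the shift-invariant — $\sigma$-algebra to be $\bfP$-trivial and that $\bfP$ agrees with $\bar\bfP$ on invariant sets; the argument above has the advantage of being self-contained and of lending itself to a rate as in Remark~\ref{rem:about_Theta}.)
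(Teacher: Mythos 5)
Your proof is correct and uses the same overall decomposition as the paper: split $n^{-1}\sum_{k=0}^{n-1}\varphi\circ\sigma^k - \bar{\bfP}(\omega_1\le\gamma)$ into a bias term controlled by (B1) and a centered sum $n^{-1}\sum_{k}(\varphi\circ\sigma^k-\bE[\varphi\circ\sigma^k])$ controlled by (B2). The difference is in how the centered sum is handled. The paper disposes of it in one line by citing \cite[Corollary~1]{S93}, a ready-made strong law of large numbers for bounded $\alpha$-mixing sequences with logarithmically decaying mixing coefficients. You instead prove that strong law from scratch: Ibragimov's covariance inequality $|\mathrm{Cov}(Y_i,Y_j)|\le 4\alpha(j-i)$ yields $\mathrm{Var}(S_n)=O(n^2(\log n)^{-\beta})$, and then the Chebyshev/Borel--Cantelli argument along the stretched-exponential subsequence $n_j=\lfloor e^{j^\theta}\rfloor$ with $\theta\in(1/\beta,1)$, combined with monotonicity of $S_n$ and $n_{j+1}/n_j\to1$, upgrades subsequential convergence to full a.s.\ convergence. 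All the estimates check out (the diagonal bound, the integral comparison $\sum_{\ell\le n}(\log\ell)^{-\beta}=O(n(\log n)^{-\beta})$, the summability condition $\theta\beta>1$, and the sandwich inequality in the interpolation step), and the condition $\beta>1$ enters exactly where it should, in making the interval $(1/\beta,1)$ nonempty. Your version is longer but self-contained and, as you note, more amenable to extracting an explicit rate in the spirit of Remark~\ref{rem:about_Theta}, whereas the paper's version is shorter but opaque about why $\beta>1$ suffices.
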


\begin{proof} Define $\varphi = \mathbf{1}_{[0, \gamma]} \circ \pi_1$ as in the proof of Corollary \ref{cor:rds-1}. By (B3), it is enough to show that, for 
$\bfP$-a.e. $\omega \in \Omega$, 
\begin{align}\label{eq:to_show_birkhoff}
\lim_{n \to \infty} n^{-1} \sum_{k=0}^{n-1} \varphi \circ \sigma^k =  \bar{\bfP}( \omega_1  \le \gamma).
\end{align}
To this end, we decompose $n^{-1} \sum_{k=0}^{n-1} \varphi \circ \sigma^k - \bar{\bfP}( \omega_1  \le \gamma) = I + II$, where 
$$
I = \int_\Omega  n^{-1} \sum_{k=0}^{n-1}  \varphi \circ \sigma^k \, d \bfP - \int_\Omega \varphi  \, d \bar{\bfP}
$$
and
$$
II = n^{-1}\sum_{k=0}^{n-1} \xi_k, \quad \xi_k = \varphi \circ \sigma^k - \int_\Omega \varphi \circ \sigma^k  \, d \bfP.
$$
By (B1), $I \to 0$ as $n \to \infty$. Moreover, $\sup_{k} \sup_{\omega \in \Omega} |\xi_k(\omega)| \le 2$ and, by (B2),
\begin{align*}
	\alpha( \sigma( \xi_i  :  0 \le i \le j ), \sigma(  \xi_i  :  i \ge j + n )  ) \le \alpha(n) 
	= O( \log^{- \beta} (n)  ).
\end{align*}
Here, $\sigma( \xi_i : 0 \le i \le j )$ and $\sigma( \xi_i : i \ge j + n )$ denote the 
sub-$\sigma$-algebras generated by $\{\xi_i\}_{0 \le i \le j}$ and $\{\xi_i\}_{i \ge j+n}$, respectively. 
Thus, $(\xi_n)$ is a uniformly bounded sequence of $\alpha$-mixing random variables with coefficients decaying at the rate $O(\log^{-\beta}(n))$ with $\beta > 1$.
It follows from \cite[Corollary 1]{S93} that $II \to 0$ as $n \to \infty$, for $\bfP$-a.e. $\omega \in \Omega$. Thus, we obtain 
\eqref{eq:to_show_birkhoff}.
\end{proof}

\subsection{Pikovsky maps}

For $\gamma > 1$, let $T :  X \to X$ be a map on $X = [-1,1]$ defined implicitly by the relations
\begin{align}\label{eq:pikovsky}
	x = \begin{cases}
		\frac{1}{2 \gamma } ( 1 + T(x))^{\gamma},  &0 \le x \le \frac{1}{2 \gamma }, \\
		T(x) + \frac{1}{2 \gamma } (1 - T(x))^{\gamma}, &\frac{1}{2 \gamma } \le x \le 1,
	\end{cases}
\end{align}
and by letting $T(x) = - T(-x)$ for $x \in [-1,0]$. The graph of $T$ is illustrated in Figure~\ref{fig:maps}.
The map has neutral fixed points
at $x = 1,-1$, while at $x = 0$ its derivative becomes infinite. There is an interplay between
the neutrality of the fixed points and the degree of the singularity, which is controlled by the parameter $\gamma$:
\begin{align*}
	&T'(x) \approx C_\gamma x^{ 1/ \gamma - 1 }  \quad \text{as $x \searrow 0$}, \\
	&T'(x) \approx 1 +  \frac12 (1 - x)^{ \gamma - 1 }  \quad \text{as $x \nearrow 1$}.
\end{align*}
Note that $T$ preserves the Lebesgue measure,
and that $T$ reduces to the angle doubling map at the limit $\gamma \searrow 1$.
By~\cite[Proposition 5]{CHMV10}, $T$
is polynomially mixing with rate $n^{ - 1 / ( \gamma - 1) }$ for H\"{o}lder  continuous observables. This result was recently extended in~\cite{MR24} to i.i.d.\ random compositions of maps
in the family~\eqref{eq:pikovsky}.

Let $(T_k)$ be a sequence of maps in~\eqref{eq:pikovsky} with parameters $\gamma_1,\gamma_2, \ldots$ We assume that, for
all $k \ge 1$,
$$
\gamma_k \in [\gamma_-, \gamma_+] \quad \text{where $1 < \gamma_{-} < \gamma_{+} < 3$.}
$$
By applying Theorem~\ref{thm:decdec} in combination with certain technical estimates from~\cite{CHMV10, MR24},
we obtain the following bound on memory loss for deterministic compositions:

\begin{thm}\label{thm:pikovsky} Let $\gamma_{-} \le \gamma \le \gamma_+$. 
There exists $\kappa_0 \in (0,1)$ such that the following holds.
Assume that for some $a > 0$, $\kappa \le \kappa_0$,
	and $N \in \bN$,
	\begin{align}\label{eq:freq_pikovsky}
		\frac{ |  \{  1 \le k \le n \: : \:   \gamma_k \le \gamma  \}  | }{n} \in [  a(1 - \kappa), a ( 1 + \kappa )   ]
	\end{align}
	holds for all $n \ge N$.
	Let
	$\mu$ and $\mu'$ be probability measures on $X$ with H\"{o}lder continuous densities. Then
	\begin{align}\label{eq:ml_pikovsky}
		| (T_{1,n})_*( \mu - \mu' ) |  = O( n^{ - \frac{ 1  }{\gamma - 1} }  ).
	\end{align}
\end{thm}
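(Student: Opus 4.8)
The plan is to derive Theorem~\ref{thm:pikovsky} from Theorem~\ref{thm:decdec} by the same route used for the Liverani--Saussol--Vaienti family: set up an inducing scheme for the sequence $(T_k)$, verify the abstract assumptions (NU:1-5), check that measures with H\"older densities are regular, and then establish a nonuniform tail bound of the form~\eqref{eq:tb_assumption}. First I would fix a reference set $Y$ that is bounded away from the neutral fixed points $\pm1$ but meets every neighbourhood of the singularity $0$, as in~\cite{CHMV10, MR24}, together with reference probability measures $m_k$ on $Y$. Unlike in the LSV case these may have to be taken $k$-dependent --- absolutely continuous with respect to Lebesgue, with a power-type density near $0$ whose exponent is tuned to $\gamma_k$ --- so that the return-time tails $h^k$ acquire a \emph{uniform} exponent; the singularity at $0$ makes this necessary, but also only helps with (NU:3), since the relevant distortion contributions are concentrated on the tiny partition elements near $0$ and decay. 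I would then verify (NU:1-4): the first return map of $T_k, T_{k+1},\ldots$ to $Y$ is full-branch and uniformly expanding (the branches adjacent to $0$ being expanding precisely because of the singularity), and the log-Lipschitz control of the Jacobians together with the backward-expansion bound would follow from the distortion estimates of~\cite{CHMV10, MR24} --- this is where the hypothesis $\gamma_+ < 3$ enters. Assumption (NU:5) would be obtained from Proposition~\ref{prop:mixing}, via a uniform moment bound $\sup_k\int\tau_k^p\,dm_k<\infty$ for some $p>1$, or failing that via the coprime-return-times criterion.

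Next, in parallel with Proposition~\ref{prop:tb_lsv}, I would establish the tail bounds. Passing to the subsequence $(T_{n_j})$ of maps with parameter $\le\gamma$, which is infinite by~\eqref{eq:freq_pikovsky}, I would invoke the single-map estimates of~\cite{CHMV10, MR24}: for Pikovsky maps with parameter $\le\gamma$ these give a return-time tail of order $n^{-\gamma/(\gamma-1)}$, matching the mixing rate $n^{-1/(\gamma-1)}$. The structural step is a comparison $\tau_k(x)\ge\ell\Rightarrow\widetilde\tau_k(x)\ge G_\ell(k)$ between return times along the full sequence and along the good subsequence, where $G_\ell(k)=\#\{k\le j\le k+\ell-1:\gamma_j\le\gamma\}$; this rests on the monotonicity of the near-fixed-point dynamics and the fact that a map with parameter $\le\gamma$ pushes orbits away from $\pm1$ at least as fast as the parameter-$\gamma$ map. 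Combined with the lower bound on $G_\ell(k)$ coming from~\eqref{eq:freq_pikovsky} as in~\eqref{eq:estim_gl}, this yields $h^k(\ell)\le C(1\vee(\ell-\Theta k))^{-\beta}$ for all $k$, with $\beta=\gamma/(\gamma-1)>1$ and $\Theta$ a fixed multiple of $\kappa$. For H\"older-density probability measures $\mu,\mu'$ on $X$, which have bounded densities with respect to Lebesgue, the tail is controlled by the Lebesgue measure of the set of points within $O(n^{-1/(\gamma-1)})$ of $\pm1$, so $\mu$ and $\mu'$ share a common tail bound $r(n)=O(n^{-1/(\gamma-1)})$; thus one may take $\beta'=1/(\gamma-1)\le\beta$ in~\eqref{eq:tb_assumption}.

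It remains to know that H\"older-density measures are regular, which follows from~\cite{KL21} (our setup only relaxes the tail-bound hypothesis). Applying Theorem~\ref{thm:decdec} with $k=1$, $\beta=\gamma/(\gamma-1)$ and $\beta'=1/(\gamma-1)$ --- after choosing $\kappa_0$ small enough that the resulting $\Theta$ lies below the threshold of Theorem~\ref{thm:decdec}, which is how $\kappa_0$ comes to depend on $\gamma$, $\gamma_\pm$ and the dynamical constants --- gives a decomposition $\mu=\sum_n\alpha_n\mu_n$ with $\sum_{j>n}\alpha_j=O(n^{-1/(\gamma-1)})$, and the analogous decomposition for $\mu'$ with the same $\alpha_n$. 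Hence $|(T_{1,n})_*(\mu-\mu')|\le 2\sum_{j>n}\alpha_j=O(n^{-1/(\gamma-1)})$, which is~\eqref{eq:ml_pikovsky}.

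I expect the tail-bound step to be the main obstacle. In the LSV family the expanding branch is linear, so the injection of $Y$ into a neighbourhood of the neutral fixed point is parameter-independent and the tail exponent is automatically uniform; for Pikovsky maps both the compression near $0$ and the escape from near $\pm1$ carry parameter dependence, so arranging the inducing data --- in particular the reference measures $m_k$ --- so that the nonuniform tail bounds emerge with a single exponent $\beta>1$, while keeping the distortion estimates of~\cite{MR24} applicable, is the delicate part. A related point is that the comparison between full-sequence and good-subsequence return times must survive the singularity: near $0$, in contrast to the behaviour near $\pm1$, it is the maps with \emph{larger} parameter that are more strongly expanding, so the \say{good maps push harder} heuristic is valid only on the part of the orbit spent near $\pm1$, and one has to check that the brief excursions near $0$ do not spoil it.
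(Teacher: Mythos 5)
Your overall route --- cast the Pikovsky compositions into the framework of Section~\ref{sec:results}, verify (NU:1--5) using the distortion estimates of~\cite{MR24}, show H\"older-density measures are regular, establish nonuniform return-time tails via a comparison to the ``good'' subsequence $(T_{n_j})$, and feed the result into Theorem~\ref{thm:decdec} --- is the same as the paper's. However, there is a genuine gap in how you set up the inducing data. You propose $k$-dependent reference measures $m_k$ with a ``power-type density near $0$ whose exponent is tuned to $\gamma_k$,'' hoping to uniformize the tail exponent. This is not what the paper does, and it would introduce a serious obstacle you flag but do not resolve: the distortion bounds of~\cite[Lemma~2.11]{MR24}, which are what make \ref{ass:distortion} verifiable, are stated for Lebesgue reference measures, and would have to be re-derived from scratch for your tuned measures. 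The paper sidesteps this entirely: $Y_k = \Delta_0^-(k)\cup\Delta_0^+(k)$ is the $k$-dependent Markov cylinder adjacent to the singularity, and $m_k$ is simply normalized Lebesgue on $Y_k$. Nonuniformity in the tails is harmless because the abstract Theorem~\ref{thm:decdec} is built precisely to handle tails of the form $(1\vee(n-\Theta_j j))^{-\beta}$; no tuning of the reference measure is needed. Likewise, you worry about whether the comparison $\tau_k(x)\ge\ell \Rightarrow \widetilde\tau_k(x)\ge G_\ell(k)$ survives the singularity at $0$; the paper's argument for Pikovsky maps does not rely on this pointwise comparison but instead uses the Markov cylinder structure (the inclusions $\cup_{j\ge\ell}\Delta_j(k)\subset\{\widetilde\tau_k\ge G_\ell(k)\}$, which follow from monotonicity of the near-$\pm1$ inverse branches, and $\delta_j(k)\subset T_k^{-1}\Delta_{j-1}(k+1)$) together with Lebesgue-preservation, which is cleaner and avoids the excursion issue entirely.

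A second discrepancy: you take $\beta=\gamma/(\gamma-1)$ for the reference-tail exponent, citing the single-map return-time estimates of~\cite{CHMV10,MR24}, whereas the paper's Proposition~\ref{prop:tail_pikovsky} establishes and applies $\beta=1/(\gamma-1)$ (and a direct computation with~\cite[Lemma~2.4]{MR24} gives $\gamma_-/(\gamma-1)$ for $m_k$ alone). Your exponent is the ``right'' one for a single map with parameter exactly $\gamma$, and it always exceeds $1$, but it is not what the paper's comparison argument (which passes through the $\Delta$-cylinder tail, of order $n^{-1/(\gamma-1)}$) actually yields, and you do not supply a proof of it in the nonstationary setting. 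Finally, a minor omission: you state Theorem~\ref{thm:decdec} applies because H\"older-density measures are regular, but the paper's regularity proof (Proposition~\ref{prop:regular_pikovsky}) requires the density to be \emph{positive}; the general H\"older case is then recovered via the symmetrization trick~\eqref{eq:decomp_munu}.
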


\begin{rmk} A concrete expression for $\kappa_0$ depending on dynamical constants can be extracted from the proofs of Propositions \ref{prop:tail_pikovsky} 
and \ref{prop:Stail}. 
\end{rmk}

Before proving Theorem~\ref{thm:pikovsky}, we state a corollary for random compositions.
Let $(\omega_n)$ be a random sequence of parameters sampled from a probability space $(\Omega, \cF, \bfP) =
( \Omega_0^{\bN}, \cE^{\bN},
\bfP )$, where $\Omega_0 = [ \gamma_-, \gamma_+ ]$
with $1 < \gamma_{-} < \gamma_{+} < 3$, and $\cE$ is the Borel $\sigma$-algebra on $\Omega_0$.
As in Section~\ref{sec:rds}, denote by $T_n \circ \cdots \circ T_1$ compositions of maps $T_k$ from~\eqref{eq:pikovsky} along the random sequence
$\omega$, where each $T_k$ corresponds to a map with parameter $\omega_k$.

\begin{cor}\label{cor:pikovsky_random} Let $\gamma \in \Omega_0$. Assume the following:
	\begin{itemize}
		\item[(A1)] the law $\bfP$ is preserved by the shift map $\sigma$,
		\item[(A2)] $(\sigma, \bfP)$ is ergodic, and
		\item[(A3)] $\bfP(  \omega_1 \le \gamma  ) > 0$.
	\end{itemize}
        Let $\mu$ and $\nu$ be probability measures on $X$ with  H\"{o}lder continuous densities. Then
	\begin{align}\label{eq:memory_loss_random_pikovsky}
		| (T_{1,n})_*( \mu - \nu ) |  = O( n^{  - \frac{1}{\gamma - 1} }  ).
	\end{align}
\end{cor}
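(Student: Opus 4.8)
The plan is to reduce Corollary~\ref{cor:pikovsky_random} to Theorem~\ref{thm:pikovsky} in exactly the same way that Corollary~\ref{cor:rds-1}(ii) reduces to Theorem~\ref{thm:lsv}. The only thing that needs to be checked is that the ergodicity hypotheses (A1)--(A3) imply the frequency condition~\eqref{eq:freq_pikovsky} along $\bfP$-a.e.\ realization $\omega$, for some admissible $a$ and $\kappa$. First I would fix $\gamma \in \Omega_0$ and set $\varphi = \mathbf{1}_{[0,\gamma]} \circ \pi_1$, where $\pi_1(\omega) = \omega_1$. By (A1) the shift $\sigma$ preserves $\bfP$, and by (A2) it is ergodic, so Birkhoff's ergodic theorem gives
\[
\lim_{n \to \infty} \frac1n \sum_{k=0}^{n-1} \varphi \circ \sigma^k(\omega) = \int_\Omega \varphi \, d\bfP = \bfP(\omega_1 \le \gamma) =: b
\]
for $\bfP$-a.e.\ $\omega$, and by (A3) we have $b > 0$. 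Since $\varphi \circ \sigma^k(\omega) = \mathbf{1}_{[0,\gamma]}(\omega_{k+1})$, this says $\frac1n \#\{1 \le k \le n : \gamma_k \le \gamma\} \to b$.

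Now the key point is matching this to the hypothesis of Theorem~\ref{thm:pikovsky}, which requires the ratio to lie in $[a(1-\kappa), a(1+\kappa)]$ with $\kappa \le \kappa_0$ for a fixed $\kappa_0 \in (0,1)$ depending only on the dynamical constants. Given the almost-sure convergence to $b > 0$, for $\bfP$-a.e.\ $\omega$ there exists $N = N(\omega)$ such that $|\frac1n \#\{1 \le k \le n : \gamma_k \le \gamma\} - b| \le b\kappa_0/2$ for all $n \ge N$; hence~\eqref{eq:freq_pikovsky} holds with $a = b$ and $\kappa = \kappa_0/2 \le \kappa_0$. Applying Theorem~\ref{thm:pikovsky} with these parameters yields~\eqref{eq:memory_loss_random_pikovsky} for $\bfP$-a.e.\ $\omega$, with an implied constant depending on $\omega$ only through $N(\omega)$ (as already noted for Theorem~\ref{thm:lsv}). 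Note that the densities of $\mu$ and $\nu$ are required only to be H\"older continuous, exactly as in Theorem~\ref{thm:pikovsky}, so no separate cone hypothesis is needed here (in contrast to part (a) of Theorem~\ref{thm:lsv}).

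There is essentially no obstacle: the argument is a verbatim transcription of the proof of Corollary~\ref{cor:rds-1}(ii), substituting Theorem~\ref{thm:pikovsky} for Theorem~\ref{thm:lsv}. The only mild subtlety is bookkeeping: one should state explicitly that $N$, and hence the constant in~\eqref{eq:memory_loss_random_pikovsky}, depends on $\omega$; and one should observe that the fixed threshold $\kappa_0$ from Theorem~\ref{thm:pikovsky} is what makes the a.s.\ convergence enough to conclude, since it lets us absorb any realization into the regime $\kappa \le \kappa_0$ once $n$ is large. If one wishes, the remark after Corollary~\ref{cor:rds-1} about superpolynomial rates when $\bfP(\omega_1 \le \gamma) > 0$ for all $\gamma > \gamma_-$ carries over as well, though here the exponent $-1/(\gamma-1)$ only improves down to $-1/(\gamma_- - 1)$ as $\gamma \searrow \gamma_-$, which is the best the family~\eqref{eq:pikovsky} allows.
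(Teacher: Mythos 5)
Your proof is correct and follows the same route the paper intends: the paper's own proof is a one-line reference to Birkhoff's ergodic theorem and the proof of Corollary~\ref{cor:rds-1}, and your argument is precisely that reduction written out, including the (minor but worth stating) step that almost-sure convergence to $b > 0$ lets you take $a = b$ and any $\kappa \le \kappa_0$ once $n \ge N(\omega)$.
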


\begin{proof}
	The result follows by combining Theorem~\ref{thm:pikovsky} with Birkhoff's
	ergodic theorem, as in the proof of Corollary~\ref{cor:rds-1}.
\end{proof}

\subsubsection{Proof of Theorem~\ref{thm:pikovsky}}\label{sec:proof_pikovsky} Proving Theorem~\ref{thm:pikovsky} amounts to verifying that
nonstationary compositions of Pikovsky maps~\eqref{eq:pikovsky} fit the abstract framework in Section~\ref{sec:results},
and demonstrating that under~\eqref{eq:freq_pikovsky}, the  tails of return times decay at the correct order in view of~\eqref{eq:ml_pikovsky}.
Both of these properties are essentially a consequence of certain estimates
established in~\cite{CHMV10,MR24}. In particular,~\ref{ass:distortion} follows from
the distortion bounds in~\cite[Lemma 2.11]{MR24}. Further details are provided below.

Let $(T_k)$ be a sequence of maps in the family~\eqref{eq:pikovsky} with parameters $\gamma_k \in [\gamma_-, \gamma_{+}]$,
where $1 < \gamma_{-} < \gamma_{+} < 3$. Let $X_{-} = (-0,1)$ and $X_{+} = (0,1)$. Let $\lambda$ be the
Lebesgue measure on $X$ normalized to probability. Denote by $g_{k, -} : (-1,1) \to X_{-}$ and $g_{k, -} : (-1,1) \to X_{+}$ the left and right inverse branches of $T_k$, respectively.

To define  $\cP_k$ in Section~\ref{sec:results}, we recall from~\cite{CHMV10, MR24} the definitions of certain partitions associated with the sequence $(T_k)$. First, for each $n \ge 1$ and $k \ge 1$, let
\begin{align*}
	\Delta_n^{-}(k) = g_{k,-} ( \Delta_{n-1}^{-}( k + 1 )  ) \quad \text{and} \quad
	\Delta_n^{+}(k) = g_{k,+} ( \Delta_{n-1}^{+}( k + 1 )  ) ,
\end{align*}
where
\begin{align*}
    \Delta_0^{-}(k) = ( g_{k, -}(0), 0  ) = g_{k,-}(X_{+}) \quad \forall k \ge 1,
\end{align*}
and
\begin{align*}
    \Delta_0^{+}(k) = ( 0, g_{k, +}(0)  ) = g_{k,+}(X_{-}) \quad \forall k \ge 1.
\end{align*}
Then $\{ \Delta_n^{-}(k)  \}_{n \ge 1}$ is a (mod $\lambda$) partition of $X_{-}$, $\{ \Delta_n^{+}(k)  \}_{n \ge 1}$ is a (mod $\lambda$) partition of $X_{+}$, and $T_k$ maps $\Delta_n^{\pm}(k)$ bijectively onto $\Delta_{n-1}^{\pm}(k+1)$. Moreover,
$\Delta^+_\ell(k) = ( x_{\ell }^+(k), x_{\ell + 1}^+(k)  )$ and $\Delta^-_\ell(k) = ( x_{\ell + 1 }^-(k), x_{\ell}^-(k)  )$,
 where $x_{\ell}^{+}(k) = ( g_{k,+} \circ \cdots \circ g_{k + \ell - 1, +})(0)$ 
 and $x_{\ell}^{-}(k) = ( g_{k,-} \circ \cdots \circ g_{k + \ell - 1, -})(0)$.

Next, for each $k,n \ge 1$ define
\begin{align*}
	\delta_n^{-}(k) = T_{k}^{-1} (   \Delta_{n-1}^{+}(k+1)   ) \cap \Delta_0^{-}(k) \quad \text{and}  \quad
	\delta_n^{+}(k) = T_{k}^{-1} (   \Delta_{n-1}^{-}(k+1)   ) \cap \Delta_0^{+}(k).
\end{align*}
Then $\{ \delta_n^{-}(k)  \}_{n \ge 1}$  and $\{ \delta_n^{+}(k)  \}_{n \ge 1}$ are (mod $\lambda$) partitions of
$\Delta_0^{-}(k)$ and $\Delta_0^{+}(k)$, respectively. Note that the following maps are all bijective:
\begin{align*}
	&T_k : \delta_n^{-}(k) \to \Delta_{n-1}^{+}(k+1), \quad T_k : \delta_n^{+}(k) \to \Delta_{n-1}^{-}(k+1), \\
	&T_{k, k + n -1} : \delta_n^{-}(k) \to \Delta_{0}^{+}(k+n),  \quad T_{k, k + n -1} : \delta_n^{+}(k) \to \Delta_{0}^{-}(k+n).
\end{align*}

Set
\begin{align}\label{eq:Yk_pikovsky}
Y_k = \Delta_0^{-}(k) \cup \Delta_0^{+}(k).
\end{align}

Let $m_k$ be the Lebesgue measure on $Y_k$ normalized to probability.
For each $k \ge 1$,
we define
a partition $\cP_k$ of $X$ as follows:
\begin{align*}
	\cP_k = \{  \delta_{n}^{-}(k) \cup \delta_{n}^{+}(k)   \}_{n \ge 1}
	\cup \{  \Delta_{n}^{-}( k ) \cup  \Delta_{n}^{+}(k)  \}_{n \ge 1}.
\end{align*}
By definition, $T_{k, k + n - 1}$ maps $\delta_{n}(k) :=  \delta_{n}^{-}(k) \cup \delta_{n}^{+}(k)$
bijectively onto $Y_{k + n}$, and $T_{k, k + n - 1}$ maps
$\Delta_n(k) := \Delta_{n}^{-}( k ) \cup  \Delta_{n}^{+}(k)$ bijectively onto $Y_{k+n}$. For $x \in X$, let
$$
\tau_k(x) = \inf \{  n \ge 1 \: : \:   T_{k, k + n -1}(x)  \in Y_{k + n}   \}.
$$
Then $\tau_k$ is constant on the partition elements of $\cP_k$, so that~\ref{ass:constant} is satisfied. By inspecting the
definition of $T$ in~\eqref{eq:pikovsky}, we see that
$| F_a(y) - F_a(y') | \ge \Lambda |y - y'| > 0$ holds for all $y, y' \in a \subset Y_k$ whenever $a \in \cP_k$
where $F_a = T_{k, k + \tau_k(a) -1}$ and $\Lambda = \Lambda( \gamma_{-}, \gamma_{+} ) > 1$.
Then~\ref{ass:distortion} follows by~\cite[Lemma 2.11]{MR24} provided that $1 < \gamma_{-} < \gamma_{+} < 3$:

\begin{prop}\label{lem:nu3_pikovsky} Assume that $1 < \gamma_{-} < \gamma_{+} < 3$. Then there exists $K > 0$ such that
for any $k \ge 1$ and $a \subset Y_k$, $a \in \cP_k$,
	\begin{align*}
		\zeta = \frac{ d(F_a)_*(m_k |_a) }{dm_{k +  \tau_k(a) }} \quad \text{satisfies} \quad | \zeta |_{\LL, k + \tau_k(a)} \le K.
	\end{align*}
\end{prop}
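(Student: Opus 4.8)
The plan is to reduce Proposition~\ref{lem:nu3_pikovsky} to the pointwise distortion estimate for inverse branches established in~\cite[Lemma 2.11]{MR24}. Recall that for $a \in \cP_k$ with $a \subset Y_k$, the map $F_a = T_{k, k+\tau_k(a)-1} : a \to Y_{k+\tau_k(a)}$ is one of the full-branch bijections listed above (either $\delta_n^\pm(k) \to \Delta_0^\mp(k+n)$ or, more relevantly, the branch carrying $\Delta_0^\mp(k+n)$ back to $Y_{k+n}$), hence it is a composition $T_{k+\tau_k(a)-1} \circ \cdots \circ T_k$ restricted to a monotone injective piece. Its inverse $G = F_a^{-1} : Y_{k+\tau_k(a)} \to a$ is then a composition of inverse branches $g_{j,\pm}$ of the individual maps $T_j$. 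Since $m_k$ and $m_{k+\tau_k(a)}$ are both normalized Lebesgue measure on $Y_k$ and $Y_{k+\tau_k(a)}$ respectively, the Jacobian $\zeta$ equals, up to the fixed normalization constants $|Y_{k+\tau_k(a)}|/|Y_k|$ (which are bounded above and below uniformly in $k$ because $\gamma_k \in [\gamma_-,\gamma_+]$), the ordinary derivative $G'$ of the inverse branch.

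The key step is therefore to show $|\log G'|$ is Lipschitz on $Y_{k+\tau_k(a)}$ with a constant independent of $k$ and $a$. I would write $\log G'(y) = \sum_{i=0}^{\tau_k(a)-1} \log g_{j_i, \pm}'(y_i)$ where $y_0 = y$, $y_{i+1} = T_{k+i}^{-1}(y_i)$ along the relevant branch, and $j_i = k+i$. For two points $y, y' \in Y_{k+\tau_k(a)}$, the difference telescopes into $\sum_i \bigl( \log g_{j_i,\pm}'(y_i) - \log g_{j_i,\pm}'(y_i') \bigr)$, and each term is controlled by the bounded-distortion estimate of~\cite[Lemma 2.11]{MR24}, which gives $|\log g_{j,\pm}'(z) - \log g_{j,\pm}'(z')| \le C |z - z'|$ on the image of the branch (with $C$ depending only on $\gamma_-, \gamma_+$), and then $|y_i - y_i'| \le \Lambda^{-i} |y - y'|$ by the uniform expansion $|F_a(y)-F_a(y')| \ge \Lambda|y-y'|$ established just above the proposition statement (applied to the intermediate return maps). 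Summing the geometric series $\sum_i \Lambda^{-i} < (1-\Lambda^{-1})^{-1}$ yields the desired uniform bound $K$.

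One technical point to handle carefully: the individual inverse branches $g_{j,\pm}$ of the Pikovsky map have \emph{infinite derivative} at $0$ (since $T'$ blows up there) and a neutral fixed point at the endpoints, so the naive Lipschitz estimate for $\log g_{j,\pm}'$ is not valid on all of $[-1,1]$ but only on the restricted intervals $\Delta_0^\pm$ or the images appearing in the Gibbs--Markov return structure; this is exactly the content of~\cite[Lemma 2.11]{MR24}, and I would need to check that all the intermediate points $y_i$ indeed lie in the domains where that lemma applies --- which they do, by construction of the partitions $\Delta_n^\pm(k)$ and $\delta_n^\pm(k)$ (the first return structure is precisely engineered so that no intermediate iterate lands in the \say{bad} region until the return). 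I expect this verification --- matching the combinatorics of the branches $\delta_n^\pm$, $\Delta_n^\pm$ to the hypotheses of the cited lemma, and dealing with the first step $i=0$ where $g_{k,\pm}$ maps $Y_{k+\tau_k(a)}$ somewhere near the singularity --- to be the main obstacle, though it is more bookkeeping than genuine difficulty once the cited distortion bound is in hand. The normalization constants $|Y_k|$ contribute only a bounded additive term to $|\zeta|_{\LL}$ (in fact none, since constants have zero log-Lipschitz seminorm), so they do not affect the argument.
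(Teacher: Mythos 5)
Your plan has a structural gap that the paper's proof addresses with a dedicated case analysis, and which you have overlooked.

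The partition element $a \in \cP_k$ with $a \subset Y_k$ is $a = \delta_n(k) = \delta_n^-(k) \cup \delta_n^+(k)$, a \emph{disjoint union of two intervals} on opposite sides of $0$. The map $F_a$ is a bijection onto $Y_{k+n}$ but it is \emph{not} a single monotone piece: it consists of one branch carrying $\delta_n^-(k) \to \Delta_0^+(k+n)$ and another carrying $\delta_n^+(k) \to \Delta_0^-(k+n)$, and $F_a^{-1}$ has a jump at $0 \in Y_{k+n}$. Your telescoping ``$G = $ composition of inverse branches'' only makes sense on each smoothness component of $Y_{k+n}$ separately; when $y \in \Delta_0^+(k+n)$ and $y' \in \Delta_0^-(k+n)$, the preimages $z = F_a^{-1}(y)$ and $z' = F_a^{-1}(y')$ travel through entirely different itineraries ($\Delta_j^+$ for one, $\Delta_j^-$ for the other) and the difference does not telescope at all. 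The paper handles exactly this case with a separate argument, using the oddness of the Pikovsky map and the trick of~\cite[Remark~1]{CHMV10}: by symmetry, $-z' \in \delta_n^-(k)$ and $T_j'(\cdot)$ is even, so $\log\zeta(y) - \log\zeta(y')$ can be rewritten as $\log T_{k,k+n-1}'(z) - \log T_{k,k+n-1}'(-z')$ with $z$ and $-z'$ now in the \emph{same} component, after which~\cite[Lemma~2.11]{MR24} applies and the bound $K|x+x'| \le K|x-x'|$ follows. You make no mention of this symmetry argument, and without it your proof does not close.

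There is a second, independent problem with the telescoping itself. You claim a single-step estimate $|\log g_{j,\pm}'(z)-\log g_{j,\pm}'(z')| \le C|z-z'|$ uniform in $j$, attributing it to~\cite[Lemma~2.11]{MR24}; but that lemma controls the log-derivative of the \emph{full} composition $T_{k,k+n-1}$ on a partition element, not of individual inverse branches. Near the neutral fixed points $\pm 1$ the Pikovsky map satisfies $\log T'(x) \asymp (1-|x|)^{\gamma-1}$, which is only $(\gamma-1)$-H\"older, not Lipschitz, when $\gamma < 2$; the per-step Lipschitz constant on $\Delta_j^\pm$ is unbounded in $j$. Moreover the paper's assumption~\ref{ass:expansion} gives only a bounded ratio $|y_i-y_i'| \le K|y-y'|$ at intermediate times, not the geometric decay $\Lambda^{-i}|y-y'|$ you invoke — the full-return expansion by $\Lambda$ does not distribute uniformly over the intermediate iterates near the neutral fixed point. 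The compensation between the blowing-up per-step H\"older constants and the shrinking interval sizes is precisely the non-trivial content of~\cite[Lemma~2.11]{MR24}, which is why the paper invokes it for the whole composition rather than re-deriving it via a geometric series. You should do the same; the telescoping as you describe it does not give a uniform $K$.
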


\begin{proof}
    Suppose that $a = \delta_{n}(k)$. Let $x,x' \in Y_{k + n}$. Then
	\begin{align*}
		\zeta(x) = \frac{ \lambda( Y_{k + \tau_k(a)} ) }{  \lambda( Y_k ) }  \frac{  1  }{  T_{k, k + n - 1}'  (z)  },
	\end{align*}
	where $z = (T_{k, k + n - 1} |_{ \delta_{n}(k)  } )^{-1} (x)$.
        Let $z' = (T_{k, k + n - 1} |_{ \delta_{n}(k)  } )^{-1} (x')$. If $z,z' \in \delta_n^{+}(k)$
	or $z,z' \in \delta_n^{-}(k)$, then by~\cite[Lemma 2.11]{MR24},
	$$
	|\log   \zeta(x) - \log  \zeta(x') | \le K|  T_{k, k+n-1} (z) - T_{k, k+n-1} (z') |  = K |x - x '|.
	$$
	Next, suppose that $z$ and $z'$ belong to different smoothness components, say $z \in \delta_n^{-}(k)$
	and $z' \in \delta_n^{+}(k)$. In this case we can exploit the trick described in~\cite[Remark 1]{CHMV10}. Namely,
	by symmetry of $T_k$
	we have $-z' \in \delta_n^{-}(k)$ and $T_j'(y) = T_j'(-x)$. Since each $T_j$ is an odd function,
	\begin{align*}
	|\log   \zeta(x) - \log  \zeta(x') | &= | \log  T_{k, k + n - 1}'  (z) - \log T_{k, k + n - 1}'  (-z')  | \\
	&\le K|  T_{k, k+n-1} (z) - T_{k, k+n-1} (- z') |  = K |x + x '| \le K( x - x' ).
	\end{align*}
\end{proof}

Assumption~\ref{ass:expansion} is trivially true, since all maps in~\eqref{eq:pikovsky} are expanding.
For~\ref{ass:mixing}, since each of the maps $T_n$ preserves $\lambda$, with $N_\# \geq 1$, we have
$$
(T_{k, k + j -1})_*m_k(  \tau_{k+j} \ge N_\#  ) \le \frac{(T_{k, k + j -1})_* \lambda (  \tau_{k+j} \ge N_\#  )}{  \lambda(Y_k) }
= \frac{ \lambda (  \tau_{k+j} \ge N_\#  )}{  \lambda(Y_k) }
\le C N_\#^{ - 1 / ( \gamma_{+} - 1 ) },
$$
where the last inequality follows from~\eqref{eq:tail_tauk_pikovsky} below, and $C$ is a constant determined by $\gamma_{-}, \gamma_{+}$. Moreover, we have
$
m_k( \tau_k = 1 ) \ge m_k( \delta_1^{-}(k) ) \ge \delta_\#
$
for some constant $\delta_\# > 0$ determined by $\gamma_{-}, \gamma_{+}$. Thus,~\ref{ass:mixing} holds
by Proposition~\ref{prop:mixing}.


\begin{prop}\label{prop:tail_pikovsky}  Assume~\eqref{eq:freq_pikovsky}. Then for each $k \ge 1$, $h^k$ in~\eqref{eq:nonuniform_tails} satisfies
$h^k(\ell) = O(  (  1 \vee  ( \ell - \Theta k )  )^{  - 1  / (  \gamma - 1  )  } )$ where the constant is independent of $k$,
and $\Theta = c_0\kappa (1 - \kappa)^{-1}$ for some absolute constant $c_0 > 0$.
Moreover,
for any probability measure $\mu$ with bounded density, $\mu(  \tau_1 \ge n ) = O( n^{ - 1 / (  \gamma - 1  ) } )$.
\end{prop}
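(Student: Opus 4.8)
The plan is to mirror the strategy of Proposition~\ref{prop:tb_lsv}: pass to the subsequence of ``good'' maps (those with parameter $\le \gamma$), use the single-map tail estimate for Pikovsky maps on that subsequence, and then translate back using the frequency condition~\eqref{eq:freq_pikovsky}. The key single-map input is a decay estimate of the form $\lambda(\tau_{k+j} \ge n) \le C n^{-1/(\gamma-1)}$, uniform over the sequence, which I will record as~\eqref{eq:tail_tauk_pikovsky} and derive from the diameter estimates on the cylinders $\Delta_n^\pm(k)$ and $\delta_n^\pm(k)$ established in~\cite{CHMV10, MR24}. Concretely, since $T_{k,k+n-1}$ maps $\delta_n^\pm(k)$ and $\Delta_n^\pm(k)$ onto full intervals, $\tau_k \ge n$ on the element means the orbit has spent $n$ steps near a neutral fixed point or near the critical point $0$; the size of such cylinders is controlled by the induced dynamics near $x = \pm 1$ (giving $n^{-\gamma/(\gamma-1)}$ per branch in the relevant coordinate, hence $n^{-1/(\gamma-1)}$ for the measure of the tail after accounting for the derivative blow-up at $0$), with constants depending only on $\gamma_-, \gamma_+$ since $\gamma_k \in [\gamma_-, \gamma_+]$.

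Next I would set up the counting function exactly as in Proposition~\ref{prop:tb_lsv}. Let $\gamma_{n_1}, \gamma_{n_2}, \ldots$ be the subsequence of parameters $\le \gamma$, write $\widetilde T_j = T_{n_j}$, and let $\widetilde\tau_k$ be the first return time to $Y$ with respect to $T_{n_{m(k)}}, T_{n_{m(k)+1}}, \ldots$ where $m(k) = \min\{j : n_j \ge k\}$. The key observation is again
\[
\tau_k(x) \ge \ell \ \implies\ \widetilde\tau_k(x) \ge G_\ell(k), \qquad G_\ell(k) = \#\{k \le j \le k+\ell-1 : \gamma_j \le \gamma\},
\]
because each step spent at a non-good map cannot decrease the number of steps needed and each good step the orbit survives contributes to $\widetilde\tau_k$. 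Since every good map has parameter $\le \gamma$, the single-map tail estimate applied along the good subsequence gives $m_k(\widetilde\tau_k \ge j) = O(j^{-1/(\gamma-1)})$ with a constant independent of $k$. Combining, $h^k(\ell) = m_k(\tau_k \ge \ell) \le m_k(\widetilde\tau_k \ge G_\ell(k)) = O(G_\ell(k)^{-1/(\gamma-1)})$, and the statement about $\mu(\tau_1 \ge n)$ follows the same way from the bounded-density assumption together with~\eqref{eq:freq_pikovsky} (so $G_n(1) \asymp n$).

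Finally, I would insert the lower bound on $G_\ell(k)$ obtained from~\eqref{eq:freq_pikovsky} verbatim as in~\eqref{eq:estim_gl}: for $k > N$ and $\ell$ large enough,
\[
G_\ell(k) = G_{k+\ell-1}(1) - G_{k-1}(1) \ge (k+\ell-1)a(1-\kappa) - (k-1)a(1+\kappa) \ge \frac{a(1-\kappa)}{2}\Bigl(\ell - \frac{4\kappa}{1-\kappa}k\Bigr),
\]
so $h^k(\ell) = O\bigl((1 \vee (\ell - \Theta k))^{-1/(\gamma-1)}\bigr)$ with $\Theta = c_0 \kappa(1-\kappa)^{-1}$ and an absolute $c_0$ (here $c_0 = 4$ works, modulo absorbing the range $\ell < 4(1+\kappa)/(1-\kappa)$ into the $O$-constant). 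The only genuinely Pikovsky-specific obstacle is the first step — extracting the uniform single-map return-time tail $O(n^{-1/(\gamma-1)})$ from the estimates in~\cite{CHMV10, MR24}, in particular handling the derivative singularity at $0$ when converting cylinder diameters into $\lambda$-measures and checking that the constants depend only on $\gamma_-, \gamma_+$ and not on the individual $\gamma_k$; once~\eqref{eq:tail_tauk_pikovsky} is in hand, the combinatorial part is identical to the LSV case.
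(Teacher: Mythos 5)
Your overall strategy is the same as the paper's (pass to the subsequence of good maps, establish the uniform single-map tail bound, translate via the counting function $G_\ell(k)$), and the derivation of the single-map tail $\lambda(\tau_k \ge n) = O(n^{-1/(\gamma-1)})$ from the cylinder diameter estimates of~\cite{CHMV10, MR24} matches the paper. However, there is a genuine gap in the combinatorial step. You claim, \emph{pointwise on all of} $\{\tau_k \ge \ell\}$, that $\tau_k(x) \ge \ell \implies \widetilde\tau_k(x) \ge G_\ell(k)$, and assert that the combinatorics are ``identical to the LSV case.'' For Pikovsky maps this implication is not valid in general, because the escape set decomposes as $\{\tau_k \ge \ell\} = \bigcup_{j\ge\ell}\Delta_j(k) \cup \bigcup_{j\ge\ell}\delta_j(k)$, with the $\delta_j(k)$ cylinders contained \emph{inside} the $k$-dependent reference set $Y_k = \Delta_0^-(k) \cup \Delta_0^+(k)$. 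The set inclusion $\bigcup_{j\ge\ell}\Delta_j(k) \subset \{\widetilde\tau_k \ge G_\ell(k)\}$ does hold (by monotonicity of the inverse branches $g_{j,+}(y) \ge y$, so removing maps from the composition pulls the cylinder endpoint $x_\ell^+(k)$ toward the good-subsequence endpoint), but on $\delta_j(k)$ the argument breaks down: a point $x \in \delta_j(k)$ has $\tau_k(x)=j$, yet $\widetilde T_{m(k)}(x) = T_{n_{m(k)}}(x)$ is applied with a branch and to a target set $Y_{n_{m(k)+1}}$ that generally have no relation to $T_k$ and $Y_{k+1}$, since both branches and $Y_k$ vary with the parameter. (In the LSV case this issue is masked because $Y \equiv [1/2,1]$ is fixed and the right branch is $\gamma$-independent.)

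The paper handles the $\delta$-part via the extra observation $\delta_j(k) \subset T_k^{-1}\Delta_{j-1}(k+1)$: one application of $T_k$ lands the orbit in a $\Delta$-cylinder of the shifted sequence, and since each Pikovsky map preserves Lebesgue, the bounded-density hypothesis carries over to $(T_k)_*\mu$. This produces the two-term estimate
\[
\mu(\tau_k \ge \ell)
\le \mu(\widetilde\tau_k \ge G_\ell(k)) + (T_k)_*\mu(\widetilde\tau_{k+1} \ge G_{\ell-1}(k+1))
\le C' G_\ell(k)^{-1/(\gamma-1)} + C' G_{\ell-1}(k+1)^{-1/(\gamma-1)},
\]
rather than the single term your argument would give. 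After this fix, the final step (inserting the lower bound $G_\ell(k) \ge \tfrac{a(1-\kappa)}{2}(\ell - \tfrac{c_0\kappa}{1-\kappa}k)$ from~\eqref{eq:freq_pikovsky}) is indeed as you describe, though with an unspecified absolute $c_0$ rather than exactly $4$ because of the extra shift by one unit in $k$.
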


\begin{proof} First, consider a sequence $(T_j)$ of maps in~\eqref{eq:freq_pikovsky}
with parameters $\gamma_j \in [\gamma_{-}, \gamma]$  where $\gamma$ is as in \eqref{eq:pikovsky}.
Then
\begin{align*}
	\lambda( \tau_k \ge n   ) = \lambda \biggl(  \bigcup_{\ell \ge n} \Delta_\ell(k)   \biggr) + \lambda \biggl(  \bigcup_{\ell \ge n} \delta_{\ell}(k)   \biggr).
\end{align*}
Denoting by $x_{\ell}^{+}( \gamma )$ the preimages corresponding to the stationary sequence
$\gamma_1 = \gamma_2 = \cdots = \gamma$, we have
$$
1 -  x_{\ell }^{+}(k) \le 1 -  x_{ \ell }^{+}(\gamma) \le  Cn^{ - \frac{1}{\gamma - 1}  },
$$
where the last inequality is due to~\cite{CHMV10}. Hence, $
\lambda (  \cup_{\ell \ge n} \Delta_\ell^{+}(k)   ) = \lambda ( ( x_{n-1}^{+}(k) , 1  ) ) = O(n^{ -  1 / ( \gamma - 1 )}  )
$. The same estimate holds with $\Delta_\ell^{-}(k)$ in place of  $\Delta_\ell^{+}(k)$, leading to
$\lambda (  \cup_{\ell \ge n} \Delta_\ell(k)   ) =  O(n^{ -  1 / ( \gamma - 1 )}  )$. On the other hand, by~\cite[Lemma 2.4]{MR24},
$$
 \lambda \biggl(  \bigcup_{\ell \ge n} \delta_{\ell}^{\pm}(k)   \biggr) = O\biggl(  n^{ - \frac{\gamma_-}{ \gamma  - 1 }  }   \biggr),
$$
where the constant is determined by $\gamma_{-}, \gamma$. Hence,
\begin{align}\label{eq:tail_tauk_pikovsky}
	\lambda( \tau_k \ge n   ) = O(n^{ - \frac{1}{\gamma - 1}  }).
\end{align}

Next, let $(T_j)$ be a sequence of maps as in Theorem~\ref{thm:pikovsky}. Similar to the proof of Theorem~\ref{thm:lsv}, we let  $\widetilde{T}_j =
T_{n_j}$ where $\gamma_{ n_{1} }, \gamma_{n_2}, \ldots$ is the subsequence of all parameters $\gamma_{n}$
with $\gamma_{n} \le  \gamma$ and,
for each $k, \ell \ge 1$, define
\begin{align*}
	G_\ell(k) &= \# \{ k \le j \le k + \ell - 1 \: : \:
        \gamma_j  \le \gamma  \} =  \# \bigl( \{ n_j \: : \: j \in \bN \} \cap [k , k + \ell - 1 ] \bigr).
\end{align*}
Further, let
$$
\widetilde{\tau}_k(\ell) = \inf \{  \ell \ge 1 \: : \:   \widetilde{T}_{ m(k), m(k) + \ell - 1  }(x) \in Y_{  n_{ m(k) + \ell   }  }  \}, \quad
m(k) = \min \{  j \ge 1 \: : \: n_j \ge k  \},
$$
where $\widetilde{T}_{ m(k), m(k) + \ell - 1  } = \widetilde{T}_{ m(k) + \ell - 1  } \circ \cdots \circ \widetilde{T}_{ m(k)  }$.
By the definitions of $\Delta_n(k)$ and $\delta_n(k)$, we have
$$
\bigcup_{  j \ge \ell  } \Delta_j(k) \subset \{   \widetilde{\tau}_k \ge G_\ell(k)  \} \quad \text{and} \quad
\delta_j(k) \subset T_{k}^{-1}  \Delta_{j - 1}(k+1).
$$
Hence, using~\eqref{eq:tail_tauk_pikovsky} we obtain
\begin{align*}
	\mu(  \tau_k \ge \ell  ) &\le \mu(  \widetilde{\tau}_k \ge G_\ell(k) ) + (T_k)_*\mu( \cup_{ j \ge \ell  }  \Delta_{j-1}(k+1)   )  \\
	&\le \mu(  \widetilde{\tau}_k \ge G_\ell(k) ) + (T_k)_*\mu(   \widetilde{\tau}_{k + 1} \ge G_{ \ell - 1}(k + 1)    ) \\ 
	&\le C \lambda(  \widetilde{\tau}_k \ge G_\ell(k) ) + C \lambda(   \widetilde{\tau}_{k + 1} \ge G_{ \ell - 1}(k + 1)    ) \\
	&\le C' G_\ell(k)^{  - \frac{1}{\gamma - 1}   } + C' G_{ \ell - 1}(k + 1)^{   - \frac{1}{\gamma - 1}    },
\end{align*}
where $C' > 0$ is a constant depending on $\gamma, \gamma_{-}, \gamma_{+}$ and $\mu$.
Finally, as in~\eqref{eq:estim_gl}, by invoking~\eqref{eq:freq_pikovsky} we see that
$$
G_\ell(k) \ge \frac{a ( 1 -  \kappa  ) }{2} \biggl(  \ell -  \frac{c_0 \kappa }{ 1 - \kappa } k   \biggr),
$$
whenever $k \ge N$ and $\ell$ is large enough (depending on $\kappa$), where $c_0 > 0$ is an absolute constant.
\end{proof}

To complete the proof of Theorem~\ref{thm:pikovsky}, it remains to verify that measures with H\"{o}lder
continuous densities are regular.

\begin{prop}\label{prop:regular_pikovsky} Let $\mu$ be a probability measure 
on $X$ with a positive H\"{o}lder continuous density. Then, for sufficiently large $K_1$ in the definition of regularity, $\mu$ is regular 
w.r.t.\ $T_k,T_{k+1}, \ldots$ for all $k \ge 1$.
\end{prop}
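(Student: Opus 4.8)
The plan is to verify regularity by splitting $\mu$ into its part on $Y_k$ and its part outside $Y_k$, handling the first with the results already in the paper and the second by hand. Write $\mu=\mu|_{Y_k}+\mu|_{X\setminus Y_k}$. Since $\mu$ has a positive Hölder continuous density $f$ on the compact space $X$, the function $\log f$ is bounded and Lipschitz with respect to the metric $d$ that Section~\ref{sec:nnue} fixes on $X$, with a Lipschitz constant $L_f$ depending only on $f$; as $m_k$ is normalized Lebesgue on $Y_k$, this gives $\bigl|\mu|_{Y_k}\bigr|_{\LL,k}\le L_f$ for all $k$, so choosing $K_2$ — and hence $K_1=K+\lambda^{-1}K_2$ — large enough (permitted by Proposition~\ref{prop:K}) and applying Proposition~\ref{prop:regular}(a) shows $\mu|_{Y_k}$ is regular w.r.t.\ $T_k,T_{k+1},\ldots$. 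By Proposition~\ref{prop:regular}(b) it then remains to prove that $\mu|_{X\setminus Y_k}$ is regular w.r.t.\ $T_k,T_{k+1},\ldots$.

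Now $\mu|_{X\setminus Y_k}$ is supported on $\bigcup_{\ell\ge1}\bigl(\Delta_\ell^-(k)\cup\Delta_\ell^+(k)\bigr)$, and $\tau_k\equiv\ell$ on $\Delta_\ell(k):=\Delta_\ell^-(k)\cup\Delta_\ell^+(k)$, which $T_{k,k+\ell-1}$ maps branch-by-branch onto $\Delta_0^-(k+\ell)$ and $\Delta_0^+(k+\ell)$, both subsets of $Y_{k+\ell}$. So the regularity of $\mu|_{X\setminus Y_k}$ amounts to the bound $\bigl|(T_{k,k+\ell-1})_*(\mu|_{\Delta_\ell(k)})\bigr|_{\LL,k+\ell}\le K_1$ for every $k,\ell\ge1$; writing the pushforward as the sum of its two branch contributions and using that the log-Lipschitz seminorm of a sum of nonnegative measures on $Y_{k+\ell}$ is bounded by the supremum of the individual seminorms (the estimate in the proof of Proposition~\ref{prop:regular}(b)), it is enough to treat each branch. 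On $\Delta_\ell^{\pm}(k)$ the map $T_{k,k+\ell-1}$ has inverse $g_{k,\pm}\cdots g_{k+\ell-1,\pm}$, and the Jacobian of this composition — i.e.\ the distortion of $T_{k,k+\ell-1}$ along orbits passing near the neutral fixed points — is log-Lipschitz with a constant depending only on $\gamma_-,\gamma_+$ by~\cite[Lemma~2.11]{MR24} and~\cite{CHMV10} (this is exactly the input behind Proposition~\ref{lem:nu3_pikovsky}, now applied to branches not contained in $Y_k$). Combining this with the bounded-backward-expansion hypothesis~\ref{ass:expansion} and the standard transfer-operator estimate $\bigl|(\,\cdot\,)_*(\mu|_a)\bigr|_{\LL}\le(\text{distortion const})+(\text{contraction const})\cdot|\mu|_{\LL}$ from~\cite[Proposition~3.1]{KKM19} — evaluated using $\bigl|\mu|_{\Delta_\ell^{\pm}(k)}\bigr|_{\LL}\le L_f$ from the first paragraph — bounds each branch contribution by a constant independent of $k$ and $\ell$, hence by $K_1$ once $K_1$ is large enough.

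The step I expect to require the most care is the passage from the Hölder hypothesis to the Lipschitz seminorms used throughout: a genuinely Hölder density is not Lipschitz in the Euclidean metric, so these seminorms would be infinite, and the argument works only because regularity is measured in the adapted (Hölder-type) metric $d$ on $X$ fixed by the abstract framework, in which Hölder functions become Lipschitz while the expansion, backward-expansion, distortion and mixing properties (NU:1)--(NU:5) and the constants $\lambda>1$, $K$, $\delta_0$ are preserved. Making this precise means checking that one fixed such metric simultaneously supports the distortion bounds of~\cite{MR24}, the hypotheses (NU:1)--(NU:5) already verified for these maps, and the input density — together with the verification, as in the proof of Proposition~\ref{prop:tail_pikovsky}, that all the constants above are genuinely uniform in $k$ and $\ell$ given $1<\gamma_-<\gamma_+<3$.
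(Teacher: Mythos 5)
Your decomposition $\mu = \mu|_{Y_k} + \mu|_{X\setminus Y_k}$ and the reduction to a log-Lipschitz bound on each branch contribution matches the paper in spirit, but there is a genuine gap in the second half: you assert that the distortion control for branches $\Delta_\ell(k)\subset X\setminus Y_k$ is ``exactly the input behind Proposition~\ref{lem:nu3_pikovsky}, now applied to branches not contained in $Y_k$,'' but Proposition~\ref{lem:nu3_pikovsky} and the underlying estimate \cite[Lemma~2.11]{MR24} are stated for branches of the \emph{first-return} map starting in $Y$; the hypothesis~\ref{ass:distortion} likewise only posits a log-Lipschitz Jacobian for $a\subset Y_k$. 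The branches $\Delta_\ell(k)$ are precisely the partition elements for which no distortion hypothesis is given a priori (only the backward-expansion bound~\ref{ass:expansion}), and the transfer-operator estimate from~\cite[Proposition~3.1]{KKM19} that you invoke is proved only under the Gibbs--Markov distortion hypothesis on $Y_k$, so you cannot use it off the shelf on $\Delta_\ell(k)$.

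The paper closes this gap by a specific pull-back trick: for $y_a, z_a \in \Delta_\ell^+(k)$ one writes $y_a = T_{k-1}(y_a')$, $z_a = T_{k-1}(z_a')$ with $y_a', z_a' \in \delta_{\ell+1}^-(k-1)\subset Y_{k-1}$ (when $k=1$, a fictitious $T_0$ with parameter $\gamma_-$ is inserted), and then \cite[Lemma~2.11]{MR24} applies along the \emph{longer} composition $T_{k-1,k+\ell-1}$ starting from $Y_{k-1}$, yielding $|\log T_{k,k+\ell-1}'(y_a)-\log T_{k,k+\ell-1}'(z_a)|\le K|y-z|$; the case of $y_a, z_a$ in different halves of $\Delta_\ell(k)$ is handled by the symmetry trick already used in Proposition~\ref{lem:nu3_pikovsky}. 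This step is the actual mathematical content of the proposition for the part of the measure living off $Y_k$, and your proposal omits it. The rest of your argument --- $|\mu|_{Y_k}|_{\LL,k}\le L_f$ followed by Proposition~\ref{prop:regular}(a)--(b), and handling H\"older densities by switching to the metric $d_\eta(x,y)=|x-y|^\eta$ --- agrees with the paper's reasoning.
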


\begin{proof}
Let $a \in \cP_k$. Denote by $\rho$ the density of $\mu$. First, assume that $\rho$ is Lipschitz continuous
with $\inf \rho = b > 0$. Then, with $F_a$ as in~\ref{ass:distortion},
\begin{align*}
	\rho_a(y) :=  \frac{ d(F_a)_*(  \mu |_a  )  }{  dm_{ k + \tau_k(a) }  }(y) = 
	C_{k,a}
	\frac{ \rho(y_a) }{ F_a'( y_a )  },
\end{align*}
where $y_a = F_a^{-1}(y)$ and $C_{k,a}$ is a scaling constant. Hence, for $y,z \in Y_{k + \tau_k(a)}$, we have
\[
    | \log \rho_a(y) - \log \rho_a(y') | \le I + II,
\]
where
\[
I = | \log \rho(y_a) - \log \rho(z_a)  |
\qquad \text{and} \qquad
II = |  \log F_a'(y_a) -  \log F_a'(z_a)  |.
\]
Since $F_a$ is distance expanding on $a$,
$$
I \le b^{-1} | \rho(y_a) - \rho(z_a)  | \le b^{-1}  L | y_a - z_a | \le b^{-1}  L | y - z |,
$$
where $L$ is the Lipschitz constant of $\rho$.

By Proposition~\ref{lem:nu3_pikovsky}, $II \le K|y-z|$ holds if $a \subset Y_k$. Then suppose that $a \subset X \setminus Y_k$,
say $a = \Delta_\ell(k)$. If $z_a,y_a \in  \Delta_\ell^{+}(k)$, we have $z_a = T_{k-1}( z_a' )$ and $y_a = T_{k-1}( y_a' )$
for some $z_a', y_a' \in \delta_{\ell + 1}^{-}(k-1)$. (In the case $k = 1$, we let $T_0$ be the map with parameter $\gamma_{-}$).
It follows by~\cite[Lemma 2.11]{MR24} that
$$
II = |   \log T_{k, k + \ell - 1}' \circ T_{k-1} ( y_a' ) -  \log T_{k, k + \ell - 1}' \circ T_{k-1} ( z_a' )   | \le K | y - z |.
$$
The same estimate holds if $z_a,y_a \in  \Delta_\ell^{-}(k)$. If $z_a$ and $y_a$ belong to different components
of  $\Delta_\ell(k)$, we can repeat the argument from the proof of Proposition~\ref{lem:nu3_pikovsky} to again
obtain $II \le K | y - z |$. We conclude that $\rho$ is regular w.r.t.\ $T_k, T_{k+1}, \ldots$

Finally, if $\rho$ is H\"{o}lder continuous with exponent
$\eta \in (0,1]$, then $\mu$ is regular with respect to the metric $d_\eta(x,y) = |x - y|^\eta$. Therefore, we can deal with this case
by simply changing the metric from $d(x,y) = |x - y|$ to $d_\eta$ in the definitions of Section~\ref{sec:nnue}.
\end{proof}

By Propositions \ref{prop:tail_pikovsky} and \ref{prop:regular_pikovsky}, the conclusion of Theorem \ref{thm:pikovsky} holds if the densities of 
$\mu$ and $\mu'$ are positive. Finally, the observation that 
\begin{align}\label{eq:decomp_munu}
(T_{1,n})_*( \mu - \mu' ) = 2 (T_{1,n})_*( \mu_1 - \mu'_1 ) 
\end{align}
where 
$$
\mu_1 = \frac{\mu + 1}{  2 } \quad \text{and} \quad \mu'_1 = \frac{\mu' + 1}{  2 } 
$$
are measures with positive H\"{o}lder densities, can be used to remove 
the restriction that the densities of $\mu$ and $\mu'$ are positive.

\subsection{Grossmann--Horner maps} We follow the presentation in~\cite{MR24}.
Let $T : X \to X$ be a map on $X = [-1,1]$ with graph as in Figure~\ref{fig:maps},
having two surjective and symmetric
branches on the subintervals $[-1,0]$ and $[0,1]$, such that $T |_{ X \setminus \{ 0 \} } \in C^1$ and
$T |_{ X \setminus \{ 0, -1, 1 \}   } \in C^2$, defined locally by
\begin{align}\label{eq:horner_def}
	T(x) = \begin{cases}
		1 - b |x|^\eta + o( |x|^\eta ), &\text{if $x \in U_0(\gamma, \eta)$}, \\
		-x + a | x  - 1|^{ \gamma } + o( |x - 1 |^{\gamma } ), &\text{if $x \in U_{1_{-}}(\gamma, \eta)$}, \\
		x + a | x  + 1|^{ \gamma } + o( |x + 1 |^{\gamma } ), &\text{if $x \in U_{-1_{+}}(\gamma, \eta)$}.
	\end{cases}
\end{align}
Here, $a,b > 0$, $\eta > 0$ and $\gamma > 1$ are parameters, $U_0(\gamma, \eta)$ is a small neighborhood of $0$,
and $U_{1_{-}}(\gamma, \eta), U_{-1_{+}}(\gamma, \eta)$ are small neighborhoods of $1,-1$, respectively.
For $i=1,2$, the derivatives of $T$ are assumed to satisfy
\begin{align}\label{eq:horner_diff}
	| T^{(i)}(x) | &= C_{i,0} |x|^{\eta-i} + o( |x|^{\eta -i} ), \quad \text{if $x \in U_0(\gamma, \eta)$}, \notag  \\
	T^{(i)}(x) &= 2  - i + C_{i,1} | x - 1|^{ \gamma - i} + o(|x - 1|^{\gamma - i})  \quad \text{if $x \in U_{1_{-}}(\gamma, \eta)$}, \\
	T^{(i)}(x) &= i - 2 + C_{i,-1} | x + 1|^{ \gamma - i} + o(|x + 1|^{\gamma - i})  \quad \text{if $x \in U_{-1_{+}}(\gamma, \eta)$}, \notag
\end{align}
where $C_{i,j} > 0$ are constants. Further, assume:
\begin{itemize}
	\item $|T'(x)| > 1$ for all $x \neq -1,1$, \smallskip
        \item $T$ is strictly monotonous and convex on the two intervals $[-1,0]$ and $[0,1]$, and \smallskip
	\item the error terms in~\eqref{eq:horner_def} and~\eqref{eq:horner_diff} are uniformly bounded.
\end{itemize}
It was shown in~\cite{CHMV10} that, for $0 < \eta < \min \{  1/( \gamma - 1), 1 \}$, there exists an absolutely continuous invariant
probability measure $\mu$ that mixes polynomially fast for H\"{o}lder continuous observables with rate
$n^{ 1 - 1/ \eta( \gamma - 1 )  }$. A nonautonomous variant of this result was obtained in~\cite{MR24}, who  considered
random i.i.d.\ compositions of maps in~\eqref{eq:horner_def}. The memory loss bound below follows by applying
Theorem~\ref{thm:decdec} in combination with results in~\cite{MR24}.

Fix $0 < a_{-} < a_{+} < \infty$, $0 < b_{-} < b_{+} < \infty$, $1 < \gamma_{-} < \gamma_{+} \le 2$, $0 < \eta_{-} < \eta_{+} < 1$. Let $T_1,T_2,\ldots$ be a sequence of maps in~\eqref{eq:horner_def}, such that
the parameters $a_i,b_i,\eta_i,\gamma_i$ associated to $T_i$ satisfy
\begin{align}\label{eq:horner_params}
	\eta_{-} \le \eta_i \le \eta_{+}, \quad   \gamma_{-} \le \gamma_i \le \gamma_{+}, \quad  a_{-} \le a_i \le a_{+}, \quad b_{-} \le b_i \le b_{+}.
\end{align}
We also assume that the constants $C_{i,j}$ appearing in~\eqref{eq:horner_diff} satisfy the uniform bounds $0 < C_{-} \le C_{i,j} \le C_{+} < \infty$.

\begin{thm}\label{thm:horner} Let $\gamma \le \gamma_+$. 
	There exists $\kappa_0 \in (0,1)$ such that the following holds.
	Assume that for some $a > 0$, $\kappa \le \kappa_0$,
	and $N \in \bN$,
	\begin{align}\label{eq:freq_horner}
		\frac{ |  \{  1 \le k \le n \: : \:   \gamma_k \le \gamma  \}  | }{n} \in [  a(1 - \kappa), a ( 1 + \kappa )   ]
	\end{align}
	holds for all $n \ge N$.
	Let
	$\mu$ and $\mu'$ be probability measures on $X$
	with H\"{o}lder continuous densities. Then,
	\begin{align*}
		| (T_{1,n})_*( \mu - \mu' ) |  = O(  n^{ - 1 / ( \gamma - 1 )  }  ).
	\end{align*}
\end{thm}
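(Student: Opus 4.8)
The plan is to verify that nonstationary compositions of Grossmann--Horner maps \eqref{eq:horner_def} fit the abstract framework of Section~\ref{sec:nnue} and then to invoke Theorem~\ref{thm:decdec}, following the same blueprint as the proof of Theorem~\ref{thm:pikovsky} in Section~\ref{sec:proof_pikovsky}. First I would recall from \cite{CHMV10, MR24} the tower construction associated with the sequence $(T_k)$: a reference set $Y_k$ bounded away from the critical point $0$ and from the neutral fixed points $\pm 1$, the normalized Lebesgue measure $m_k$ on $Y_k$, and the partition $\cP_k$ of $X$ whose elements are the cylinders on which the first return time $\tau_k$ is constant and on which the corresponding branch of $T_{k, k+\tau_k-1}$ is a bijection onto $Y_{k+\tau_k}$, denoting by $\delta_n(k)$ the cylinders inside $Y_k$ and by $\Delta_n(k)$ those in $X \setminus Y_k$, as for Pikovsky maps. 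With this choice \ref{ass:constant} holds by construction, and \ref{ass:expansion} holds trivially with $K = 1$: since $|T_i'| \ge 1$ on all of $X$ (strict off $\{-1,1\}$, equal to $1$ at $\pm 1$, and unbounded near $0$), applying further maps along a return orbit never contracts distances.

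Next I would check the Gibbs--Markov property \ref{ass:distortion} and the mixing condition \ref{ass:mixing}. Uniform backward expansion, $|F_a(y) - F_a(y')| \ge \Lambda |y - y'|$ with $\Lambda = \Lambda(\gamma_-, \gamma_+, \eta_-, \eta_+) > 1$, holds because every return orbit spends at least one step in a region where the derivative is bounded below by a constant strictly larger than $1$ (this uses $\gamma_+ \le 2$ and $\eta_+ < 1$). The log-Lipschitz bound on the Jacobian $\zeta = d(F_a)_*(m_k|_a)/dm_{k+\tau_k(a)}$ is the main technical ingredient: when the two relevant preimages lie in the same smoothness component it follows directly from the distortion estimates of \cite{MR24} (the Grossmann--Horner counterpart of \cite[Lemma 2.11]{MR24}, used for Pikovsky maps in Proposition~\ref{lem:nu3_pikovsky}), and when they lie in different components one uses oddness of the maps via the trick of \cite[Remark 1]{CHMV10}, exactly as in the proof of Proposition~\ref{lem:nu3_pikovsky}. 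For \ref{ass:mixing} I would invoke Proposition~\ref{prop:mixing}: condition~(ii) holds because, by the structure of the maps, $\tau_k$ takes two coprime values with $m_k$-mass bounded below uniformly in $k$; condition~(i) follows from the uniform moment bound $\sup_k \int \tau_k^p \, dm_k < \infty$ for any $p \in (1, 1/(\gamma_+ - 1))$, which in turn follows from the uniform tail estimate $m_k(\tau_k \ge n) = O(n^{-1/(\gamma_+-1)})$ -- a consequence of the neutral-fixed-point escape-time estimates of \cite{CHMV10, MR24} together with $dm_k/d\Leb \le C$. (Unlike for Pikovsky maps, the $T_i$ need not preserve Lebesgue, so it is the moment-bound form of Proposition~\ref{prop:mixing} that makes (i) available.)

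Then I would establish the $k$-dependent tail bounds, the analogue of Proposition~\ref{prop:tail_pikovsky}. For a sequence of maps all having parameters $\le \gamma$, the estimates of \cite{CHMV10, MR24} give $\Leb(\tau_k \ge n) = O(n^{-1/(\gamma - 1)})$: the outer cylinders $\bigcup_{\ell \ge n}\Delta_\ell(k)$ near $\pm 1$ contribute $O(n^{-1/(\gamma-1)})$ (escape from the neutral fixed points), while the critical cylinders $\bigcup_{\ell \ge n}\delta_\ell(k)$ near $0$ contribute the faster rate $O(n^{-1/(\eta_+(\gamma-1))})$ (a point within distance $\rho$ of $0$ is sent to within distance comparable to $\rho^{\eta}$ of $\pm 1$, and $\eta_+ < 1$), so the outer cylinders dominate. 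Passing to the subsequence $\widetilde T_j = T_{n_j}$ of maps with $\gamma_{n_j} \le \gamma$, the key point is that deleting the maps with $\gamma_i > \gamma$ only moves orbits closer to the neutral fixed points, so on the outer cylinders $\{\tau_k \ge \ell\} \subset \{\widetilde\tau_k \ge G_\ell(k)\}$, where $G_\ell(k) = \#\{k \le j \le k + \ell - 1 : \gamma_j \le \gamma\}$; combining this with the counting estimate $G_\ell(k) \ge \tfrac{a(1-\kappa)}{2}(\ell - \tfrac{c_0\kappa}{1-\kappa}k)$ from \eqref{eq:estim_gl} yields $h^k(\ell) = O((1 \vee (\ell - \Theta k))^{-1/(\gamma-1)})$ with $\Theta = c_0\kappa(1-\kappa)^{-1}$, $c_0$ absolute. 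Moreover, any probability measure with bounded density $\rho$ satisfies $\mu(\tau_k \ge n) \le \|\rho\|_\infty \Leb(\tau_k \ge n) = O((1 \vee (n - \Theta k))^{-1/(\gamma-1)})$ by the same argument, hence has tail bound of the required shifted form with respect to $T_k, T_{k+1}, \ldots$

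Finally I would prove that probability measures with positive H\"older densities are regular, the analogue of Proposition~\ref{prop:regular_pikovsky}: writing $\rho_a \propto \rho \circ F_a^{-1} / (F_a' \circ F_a^{-1})$ on $Y_{k+\tau_k(a)}$, the log-ratio splits as $I + II$, with $I$ controlled by the H\"older constant of $\rho$ and the backward contraction of $F_a$, and $II$ controlled by the distortion bound of \cite{MR24} when $a \subset Y_k$ and by one extra application of $T$ together with the oddness trick when $a = \Delta_\ell(k) \subset X \setminus Y_k$; if $\rho$ is merely H\"older, say with exponent $\theta \in (0,1]$, one replaces $d$ by $d_\theta(x,y) = |x-y|^\theta$ throughout Section~\ref{sec:nnue}, and the positivity assumption is then removed via $(T_{1,n})_*(\mu - \mu') = 2(T_{1,n})_*(\mu_1 - \mu_1')$ with $\mu_1 = (\mu + \Leb)/2$, $\mu_1' = (\mu' + \Leb)/2$. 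Applying Theorem~\ref{thm:decdec} with $\beta = \beta' = 1/(\gamma-1)$ (which exceeds $1$ since $\gamma < 2$) and constant shift sequence $\Theta_j \equiv c_0\kappa(1-\kappa)^{-1}$, which lies below the threshold required by Theorem~\ref{thm:decdec} once $\kappa_0$ is chosen small enough, gives $\sum_{j \ge n}\alpha_j = O(n^{-1/(\gamma-1)})$ and hence $|(T_{1,n})_*\mu - (T_{1,n})_*\mu'| \le 2\sum_{j > n}\alpha_j = O(n^{-1/(\gamma-1)})$, which is the asserted bound. The step I expect to be the real obstacle is \ref{ass:distortion} combined with the precise tail estimate in the nonstationary regime: controlling the distortion of first return maps whose orbits pass through the critical point $0$, where $|T'|$ is unbounded, and confirming that such critical passages are genuinely subdominant in the return-time tail -- this is precisely what forces one to import the quantitative estimates of \cite{CHMV10, MR24} rather than argue directly.
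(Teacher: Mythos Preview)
Your strategy matches the paper's: verify (NU:1--5), derive $k$-dependent tail bounds via the subsequence argument, check regularity, then apply Theorem~\ref{thm:decdec}. Two structural details specific to Grossmann--Horner maps differ from your Pikovsky template and need adjustment.

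First, the partition is not symmetric in the way you assume. The paper takes $Y_k = \Delta_0^-(k)$ alone (not $\Delta_0^-(k) \cup \Delta_0^+(k)$), because $T_k$ maps $\delta_1^{\pm}(k)$ onto $\Delta_0^+(k+1) \ne Y_{k+1}$; this forces an extra refinement layer $\delta_{1,n}^{\pm}(k) = T_k^{-1}(\delta_n^+(k+1)) \cap \Delta_0^{\pm}(k)$ to be added to $\cP_k$ (so returns from $\delta_1^{\pm}(k)$ take one more step). Since every element of $\cP_k$ is then a single interval, the oddness trick is unnecessary for \ref{ass:distortion}: one cites \cite[Lemma~3.10]{MR24} directly.

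Second, your moment argument for \ref{ass:mixing} fails at the admissible endpoint $\gamma_+ = 2$, where the interval $(1, 1/(\gamma_+-1))$ is empty. The correct $m_k$-tail is $m_k(\tau_k \ge n) = O\bigl(n^{-1/(\eta_+(\gamma_+-1))}\bigr)$: since $m_k$ is supported on $Y_k$, away from the neutral fixed point at $-1$, only the critical cylinders $\delta_n^-$ and $\delta_{1,n}^-$ contribute, and their total measure decays at this faster rate by \cite[Lemma~3.3]{MR24}. Because $\eta_+ < 1$, the exponent $1/(\eta_+(\gamma_+-1))$ exceeds $1$ even when $\gamma_+ = 2$, giving $\sup_k \int \tau_k^p \, dm_k < \infty$ for some $p > 1$ and hence condition~(i) of Proposition~\ref{prop:mixing}. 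With these two corrections your argument goes through as written.
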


\begin{rmk} A concrete expression for $\kappa_0$ depending on dynamical constants can be extracted from the proofs of Propositions  
	\ref{prop:tail_horner}
	and \ref{prop:Stail}. 
\end{rmk}

A bound analogous to~\eqref{eq:memory_loss_random_pikovsky} can be derived by applying Theorem~\ref{thm:horner}
in the case of random ergodic compositions of maps in~\eqref{eq:horner_def}. We omit the exact statement
to avoid repetition.

\subsubsection{Proof of Theorem~\ref{thm:horner}} Following the presentation in~\cite{MR24}, we start by describing
several partitions associated with nonstationary compositions $T_n \circ \cdots \circ T_1$ of maps in~\eqref{eq:horner_def}.
Let $X = [-1,1]$ and,
as in Section~\ref{sec:proof_pikovsky}, denote by $g_{k, -} : (-1,1) \to X_{-}$ and $g_{k, +} : (-1,1) \to X_{+}$ the left and right inverse branches of $T_k$, respectively. Here $X_{-} = (-1, 0)$ and $X_{+} = (0,1)$. Denote by $\lambda$ the Lebesgue measure on $X$
normalized to probability.

For each $n \ge 1$ and $k \ge 1$, let
\begin{align*}
	\Delta_n^{-}(k) = g_{k,-} ( \Delta_{n-1}^{-}( k + 1 )  )
    \quad \text{and} \quad
	\Delta_n^{+}(k) = g_{k,+} ( \Delta_{n-1}^{-}( k + 1 )  ),
\end{align*}
where
\begin{align*}
    \Delta_0^{-}(k) = ( g_{k, -}(0), 0  ) = g_{k,-}(X_{+}) \quad \text{and} \quad
    \Delta_0^{+}(k) = ( 0, g_{k, +}(0)  ) = g_{k,+}(X_{+}) .
\end{align*}
Then $\{ \Delta_n^{-}(k)  \}_{n \ge 1}$ and $\{ \Delta_n^{+}(k)  \}_{n \ge 1}$ are (mod $\lambda$) partitions of $X_{-}$ and
$X_{+}$, respectively. If $n = 0$, $T_k$ maps $\Delta_n^{ \pm }(k)$ bijectively onto $X_+$ and, if $n \ge 1$, $T_k$ maps
$\Delta_n^{ \pm }(k)$ bijectively onto $\Delta_{n-1}^{ - }(k + 1)$.

For $n \ge 1$, let
\begin{align*}
	\delta_n^{-}(k) = T_k^{-1} ( \Delta_{n-1}^{+} (k + 1 )  ) \cap \Delta_0^{-}(k) \quad \text{and} \quad
	\delta_n^{+}(k) = T_k^{-1} ( \Delta_{n-1}^{+} (k + 1 )  ) \cap \Delta_0^{+}(k).
\end{align*}
Then $\{  \delta_n^{-}(k) \}_{n \ge 1}$ and $\{  \delta_n^{+}(k) \}_{n \ge 1}$ are (mod $\lambda$) partitions of
$\Delta_0^{-}(k)$ and $\Delta_0^{+}(k)$, respectively. If $n = 1$, $T_k$ maps $\delta_n^{\pm}(k)$ onto
$\Delta_0^{+}( k + 1)$ and if $n \ge 2$,  $T_{k, k + n - 1}$ maps
$\delta_n^{ \pm }(k)$  bijectively onto $\Delta_0^{-}(k + n)$.

Finally, define (mod $\lambda$) partitions of $\delta_1^{-}(k)$ and $\delta_1^{+}(k)$ respectively by
$$
\delta_{1,n}^{-}(k) = T_{k}^{-1}(  \delta_n^{+}(k + 1)  ) \cap \Delta_0^{-}(k), \quad n \ge 1,
$$
and
$$
\delta_{1,n}^{+}(k) = T_{k}^{-1}(  \delta_n^{+}(k + 1)  ) \cap \Delta_0^{+}(k  ), \quad n \ge 1.
$$
Then $T_{k, k + n}$ maps both $\delta_{1,n}^{-}(k)$ and $\delta_{1,n}^{+}(k)$ bijectively onto
$\Delta_0^{-}( k + n + 1)$.

Let $Y_k = \Delta_0^{-}(k)$, and let $m_k$ be the Lebesgue measure on $Y_k$ normalized to probability.
For each $k \ge 1$, define a partition of $X$ by
$$
\cP_k = \{   \Delta_n^{-}(k), \Delta_n^{+}(k)   \}_{ n \ge 1} \cup \{ \delta_n^{-}(k), \delta_n^{+}(k)  \}_{n \ge 2}
\cup \{  \delta_{1,n}^{-}(k), \delta_{1,n}^{+}(k)   \}_{n \ge 1},
$$
and a return time function by
$$
\tau_k(x) = \inf \{  n \ge 1 \: : \:   T_{k, k + n -1}(x)  \in Y_{k + n}   \}.
$$
Since $\tau_k$ is constant on the elements of $\cP_k$, \ref{ass:constant} holds. Using the convexity
of the two branches together with properties of the first derivatives in~\eqref{eq:horner_diff}, we see that
$| F_a(y) - F_a(y') | \ge \Lambda |y - y'| > 0$ holds for some $\Lambda > 1$ whenever
 $y, y' \in a$ and $a \in \cP_k$.

 \begin{prop}\label{prop:regularity_gh} There exists $K > 0$ such that
 	for any $k \ge 1$ and $a \in \cP_k$,
 	\begin{align*}
 		\zeta = \frac{ d(F_a)_*(m_k |_a) }{dm_{k +  \tau_k(a) }} \quad \text{satisfies} \quad | \zeta |_{\LL, k + \tau_k(a)} \le K.
 	\end{align*}
    Let $\mu$ be a probability measure 
    on $X$ with a positive H\"{o}lder continuous density. Then, for sufficiently large $K_1$ in the definition of regularity, $\mu$ is regular 
    w.r.t.\ $T_k,T_{k+1}, \ldots$ for all $k \ge 1$.
 \end{prop}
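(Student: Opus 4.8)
The plan is to adapt, almost verbatim, the proofs of Propositions~\ref{lem:nu3_pikovsky} and~\ref{prop:regular_pikovsky}, the two differences being that the distortion estimates invoked are the ones established in~\cite{MR24} for Grossmann--Horner maps (rather than the Lemma~2.11 of that reference for Pikovsky maps), and that the partition $\cP_k$ now carries the extra layer $\{\delta_{1,n}^\pm(k)\}$. A preliminary observation used throughout is that each $T_j$ is symmetric, $T_j(-x)=T_j(x)$, so that $\Delta_n^-(k)=-\Delta_n^+(k)$, $\delta_n^-(k)=-\delta_n^+(k)$, $\delta_{1,n}^-(k)=-\delta_{1,n}^+(k)$, and any composition $F=T_{k,k+m-1}$ satisfies $F(-x)=F(x)$ and $|F'(-x)|=|F'(x)|$; this lets one transfer a distortion bound from an $X_+$-component of $\cP_k$ to the corresponding $X_-$-component, as in~\cite[Remark~1]{CHMV10}.

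I would first establish the distortion bound. If $a\in\cP_k$ and $a\not\subset Y_k$ the claim is vacuous because $m_k|_a\equiv 0$, so assume $a\subset Y_k=\Delta_0^-(k)$, i.e.\ $a=\delta_n^-(k)$ for some $n\ge 2$ or $a=\delta_{1,n}^-(k)$ for some $n\ge 1$. On such an $a$ the return map $F_a=T_{k,k+\tau_k(a)-1}$ is a composition of smooth inverse branches, and, as in the proof of Proposition~\ref{lem:nu3_pikovsky}, $\zeta(x)=\lambda(Y_{k+\tau_k(a)})\,\lambda(Y_k)^{-1}\,|F_a'(F_a^{-1}(x))|^{-1}$; hence $|\zeta|_{\LL,k+\tau_k(a)}\le K$ amounts to the estimate $|\log|F_a'(z)|-\log|F_a'(z')||\le K|F_a(z)-F_a(z')|$ for $z,z'\in a$, which is precisely the distortion bound of~\cite{MR24} for the composition $F_a$ (built on the first-derivative asymptotics~\eqref{eq:horner_diff} and the convexity of the branches).

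Next I would prove regularity. Let $\rho$ be the density of $\mu$; as at the end of the proof of Proposition~\ref{prop:regular_pikovsky} it suffices to treat $\rho$ Lipschitz with $b:=\inf\rho>0$ and Lipschitz constant $L$, since the general H\"older case follows by replacing $d$ with $d_\eta(x,y)=|x-y|^\eta$. By Proposition~\ref{prop:regular}(b) it is enough to bound $|(F_a)_*(\mu|_a)|_{\LL,k+\tau_k(a)}$ for each single $a\in\cP_k$. Writing $\rho_a(y)\propto\rho(F_a^{-1}(y))\,|F_a'(F_a^{-1}(y))|^{-1}$ and splitting $|\log\rho_a(y)-\log\rho_a(y')|\le I+II$ as in Proposition~\ref{prop:regular_pikovsky}, with $I$ the contribution of $\rho$ and $II$ that of $\log|F_a'|$, distance expansion of $F_a$ by a factor $\Lambda>1$ gives $I\le b^{-1}L\,|y-y'|$, and for $a\subset Y_k$ the first part of this proposition gives $II\le K|y-y'|$. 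When $a\not\subset Y_k$, i.e.\ $a$ is of the form $\Delta_n^\pm(k)$, $\delta_n^+(k)$, or $\delta_{1,n}^+(k)$, the symmetry above reduces the task to the $X_+$-components (the only $X_-$-component being $\Delta_n^-(k)=-\Delta_n^+(k)$), and for these I would pull back one step along the left inverse branch $g_{k-1,-}$ of $T_{k-1}$ (taking $T_0$ to be a fixed admissible map when $k=1$): the set $g_{k-1,-}(a)\subset g_{k-1,-}(X_+)=\Delta_0^-(k-1)=Y_{k-1}$ lies in a single element of $\cP_{k-1}$, the map $T_{k-1,k+\tau_k(a)-1}=F_a\circ T_{k-1}$ carries it onto $Y_{k+\tau_k(a)}$ as a full branch, and the distortion bound of~\cite{MR24} for this composition, read along its orbit, gives $II\le K|y-y'|$ directly. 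Choosing $K_1$ at least as large as the resulting uniform bound on $I+II$, which Proposition~\ref{prop:K} permits, yields regularity.

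The step I expect to be the main obstacle is the bookkeeping needed to check that the distortion estimates of~\cite{MR24} genuinely cover all of the composed-branch maps that arise here — in particular the longer full-branch compositions produced by the pull-back and those passing through the layer $\{\delta_{1,n}^\pm(k)\}$ — and that the one-step pull-back places each $X_+$-component outside $Y_k$ inside an element of $\cP_{k-1}$, including the edge case $k=1$. Once these are in hand, the remaining steps are a routine transcription of the Pikovsky argument.
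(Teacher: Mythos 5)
Your proposal takes essentially the same route as the paper's (very terse) proof, which simply says that the distortion bound follows from \cite[Lemma~3.10]{MR24} by the same argument as Proposition~\ref{lem:nu3_pikovsky}, and that regularity is shown as in Proposition~\ref{prop:regular_pikovsky}. Your reduction via evenness of the Grossmann--Horner maps (as opposed to oddness for Pikovsky), the identity $\zeta(x)\propto|F_a'(F_a^{-1}(x))|^{-1}$, the $I+II$ split, and the one-step pull-back along $g_{k-1,-}$ for $X_+$-components outside $Y_k$ (with $T_0$ a fixed admissible map when $k=1$) are exactly the devices the paper has in mind; the bookkeeping caveat you raise — that for $a=\delta^+_{1,n}(k)$ the pull-back lands strictly inside a $\cP_{k-1}$-element, so the relevant composition is not itself a first return — is a genuine subtlety that the paper's two-sentence proof does not address explicitly either, but it does not change the method.
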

 
\begin{rmk} The case of nonnegative H\"{o}lder densities can be treated by using the decomposition \eqref{eq:decomp_munu}.
\end{rmk}

 \begin{proof}[Proof of Proposition \ref{prop:regularity_gh}] The estimate on $ | \zeta |_{\LL, k + \tau_k(a)}$ follows by the distortion bounds in~\cite[Lemma 3.10]{MR24}, using a similar argument as in the proof of Proposition~\ref{lem:nu3_pikovsky}. The regularity of $\mu$ can be shown as in the proof of Proposition~\ref{prop:regular_pikovsky}.
 \end{proof}

 Hence, \ref{ass:distortion} holds. Again~\ref{ass:expansion} is clear since all maps in~\eqref{eq:horner_def} are expanding. For~\ref{ass:mixing}, we observe that, for some constant $\delta_\# > 0$ independent of $k$,
 $$
 m_k( \tau_k = 2 ) \ge | \delta_2^{-}(k) | \ge \delta_\#
 \quad \text{and} \quad m_k( \tau_k = 3 ) \ge | \delta_3^{-}(k) | \ge \delta_\#.
 $$
 Moreover, $\sup_k \int \tau_k^p \, d m_k \leq \sup_k \lambda(Y_k)^{-1} \int \tau_k^p \, d \lambda  < \infty$ holds for some $p > 1$ by Proposition~\ref{prop:tb_unif_gm} below, since
 $\eta_+(  \gamma_+ - 1 ) < 1$.
Now, \ref{ass:mixing} follows by  Proposition~\ref{prop:mixing}.

\begin{prop}
    \label{prop:tb_unif_gm}
    We have the following uniform tail bounds:
    \[
        \sup_{k} m_k( \tau_k \ge n  )
        = O\bigl( n^{   - \frac{ 1 }{ \eta_+( \gamma_+ - 1 )  }   }  \bigr)
        \quad \text{and} \quad 
        \lambda( \tau_k \ge n) = O( n^{ - \frac{1}{ \gamma_+ - 1 }  }  ).
    \]
\end{prop}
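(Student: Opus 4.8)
The plan is to read off from the construction of $\cP_k$ which cells carry a large value of $\tau_k$, to split these into a family sitting in a neighbourhood of the neutral fixed points $-1,1$ and a family sitting in a neighbourhood of $0$, and to bound the $\lambda$- and $m_k$-measure of each family by comparison with the stationary Grossmann--Horner map having the largest admissible parameters $\gamma_+$ and $\eta_+$. First I would note that, by the induced dynamics described above, $\tau_k$ equals $\ell$ on $\Delta_\ell^{\pm}(k)$ and on $\delta_\ell^{\pm}(k)$, and equals $\ell+1$ on $\delta_{1,\ell}^{\pm}(k)$, and that the cells of $\cP_k$ contained in $Y_k=\Delta_0^-(k)$ are exactly the $\delta_\ell^-(k)$, $\ell\ge2$, together with the $\delta_{1,\ell}^-(k)$, $\ell\ge1$. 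Hence, mod $\lambda$,
\[
\{\tau_k\ge n\}\ \subset\ \Big(\bigcup_{\ell\ge n}\Delta_\ell^-(k)\cup\Delta_\ell^+(k)\Big)\ \cup\ \Big(\bigcup_{\ell\ge n-1}\delta_\ell^-(k)\cup\delta_\ell^+(k)\cup\delta_{1,\ell}^-(k)\cup\delta_{1,\ell}^+(k)\Big),
\]
and $\{x\in Y_k:\tau_k(x)\ge n\}$ is covered by the $\delta$- and $\delta_1$-cells appearing on the right.

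For the first family, $\bigcup_{\ell\ge n}\Delta_\ell^+(k)$ is an interval $(z_n,1)$ whose left endpoint $z_n$ is the image of $0$ under a composition of right inverse branches of $T_k,T_{k+1},\dots$; near $x=1$ one has $T_i'(x)=1+C_{1,1}^{(i)}|x-1|^{\gamma_i-1}+o(|x-1|^{\gamma_i-1})$ with $\gamma_i\le\gamma_+$ and $C_{1,1}^{(i)}\le C_+$, so a standard monotone comparison of the orbit of $z_n$ with that of the corresponding boundary point for the stationary map with parameter $\gamma_+$ (the argument of~\cite{CHMV10}, carried over to nonautonomous compositions in~\cite{MR24}) gives $1-z_n=O(n^{-1/(\gamma_+-1)})$, uniformly in $k$; the same holds for $\bigcup_{\ell\ge n}\Delta_\ell^-(k)$ near $-1$. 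Thus $\lambda\big(\bigcup_{\ell\ge n}\Delta_\ell^-(k)\cup\Delta_\ell^+(k)\big)=O(n^{-1/(\gamma_+-1)})$. For the second family, each $\delta$- or $\delta_1$-cell is the preimage, through one or two iterates of the dynamics, of a cell of index $\gtrsim n$ of the system started at time $k+1$ or $k+2$, intersected with $\Delta_0^{\pm}(k)$, and therefore sits in a neighbourhood of $0$. There $T_i(x)=1-b_i|x|^{\eta_i}+o(|x|^{\eta_i})$ with $\eta_i\le\eta_+<1$, so a set of $\lambda$-size $\eps$ lying near $1$ or near $-1$ pulls back, through the relevant inverse branch, to a set of $\lambda$-size $O(\eps^{1/\eta_i})=O(\eps^{1/\eta_+})$ near $0$ (using $\eps<1$), with constants uniform in $k$ by the bounded-distortion estimates of~\cite[Lemma 3.10]{MR24} and the parameter bounds in~\eqref{eq:horner_params}. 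Feeding in the $\Delta$-estimate just proved and summing the resulting series yields
\[
\lambda\Big(\bigcup_{\ell\ge n}\delta_\ell^-(k)\cup\delta_\ell^+(k)\cup\delta_{1,\ell}^-(k)\cup\delta_{1,\ell}^+(k)\Big)=O\big(n^{-1/(\eta_+(\gamma_+-1))}\big),
\]
uniformly in $k$. Since $\eta_+<1$, the $\Delta$-contribution dominates and we obtain $\lambda(\tau_k\ge n)=O(n^{-1/(\gamma_+-1)})$, while restricting to $Y_k$ — where only the $\delta$-family appears and $\inf_k\lambda(Y_k)>0$ by~\eqref{eq:horner_params} — gives $\sup_k m_k(\tau_k\ge n)=O(n^{-1/(\eta_+(\gamma_+-1))})$, the exponent $\eta_+(\gamma_+-1)$ being the largest value of $\eta_i(\gamma_i-1)$.

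The main obstacle is the monotone comparison with the worst stationary map: one must argue that replacing each $\gamma_i$ (and each $C_{1,1}^{(i)}$) by its largest admissible value only lengthens the time the boundary orbits need to leave the fixed points $\pm1$, and that the uniformly bounded $o(\cdot)$ error terms do not destroy this. As usual this reduces to the scalar recursion $u_{m+1}=u_m\big(1+c\,u_m^{\gamma_i-1}(1+o(1))\big)$ for the distance $u_m$ to the fixed point along the orbit, whose dependence on $\gamma_i$ (and on the leading coefficient) is monotone for small $u_m$, after which the sharp stationary rate of~\cite{CHMV10} applies; alternatively, the nonautonomous size estimates established in~\cite{MR24} may simply be invoked, exactly as the Pikovsky analogues~\cite[Lemma 2.4]{MR24} and~\cite[Lemma 2.11]{MR24} were used in the proof of Proposition~\ref{prop:tail_pikovsky}. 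A secondary, routine point is to keep every distortion and comparison constant independent of $k$, which follows from the compactness of the parameter ranges in~\eqref{eq:horner_params} and the uniform bounds on the $C_{i,j}$.
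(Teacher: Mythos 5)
Your proposal is correct and takes essentially the same route as the paper: decompose $\{\tau_k \ge n\}$ into the $\Delta$-, $\delta$-, and $\delta_1$-tails, extract the exponent $1/(\gamma_+-1)$ from the $\Delta$-family near $\pm 1$ and the exponent $1/(\eta_+(\gamma_+-1))$ from the $\delta$- and $\delta_1$-families near the singularity at $0$. Where you sketch the boundary-orbit comparison and the $\eps \mapsto \eps^{1/\eta}$ pullback scaling, the paper simply cites the corresponding nonautonomous estimates from~\cite{MR24} (Lemma~3.3, Sublemma~3.2 and eq.~(53)), a shortcut you yourself acknowledge as an alternative.
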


\begin{proof} If $\mu$ is a measure on $X$, then 
for $n \ge 2$,
\begin{align}\label{eq:mu_decomp}
	\mu( \tau_k \ge n  ) &= \mu \biggl(  \bigcup_{\ell \ge n} \Delta^{\pm }_\ell(k)   \biggr) +
	 \mu \biggl(  \bigcup_{\ell \ge n} \delta^{\pm }_\ell(k)   \biggr)
	 +  \mu \biggl(  \bigcup_{\ell \ge n - 1 } \delta^{\pm }_{1, \ell } (k)   \biggr) 
	 = I + II + III.
\end{align}
First, suppose that $\mu = m_k$. Then, we have $I = 0$.
By
\cite[Lemma 3.3]{MR24}, $II = O( n^{ - 1 / \eta_+ (  \gamma_+ - 1 ) } )$. Moreover,
\begin{align*}
\lambda \biggl(  \bigcup_{\ell \ge n - 1 } \delta^{ + }_{1, \ell } (k)   \biggr)
\le \lambda \biggl(    T_k^{-1}  \bigcup_{\ell \ge n - 1 } \delta^{ + }_{ \ell } (k + 1)    \biggr)
\le C \lambda \biggl( \bigcup_{\ell \ge n - 1 } \delta^{ + }_{ \ell } (k + 1)  \biggr)
\le C' n^{ - 1 / \eta_+ (  \gamma_+ - 1 ) },
\end{align*}
where $C'$ is independent of $k$. The same estimate holds for $\delta^{ - }_{1, \ell } (k)$ in place of
$\delta^{ + }_{1, \ell } (k)$, so that $III = O( n^{ - 1 / \eta_+ (  \gamma - 1 ) })$.
This gives the desired tail bound for $\mu = m_k$.

Finally, suppose that $\mu = \lambda$. By~\cite[Sublemma 3.2 and equation (53)]{MR24},
$I = O(n^{-1/(\gamma_+ - 1)})$, so the tail bound for $\lambda$ follows.
\end{proof}

To complete the proof of Theorem~\ref{thm:horner}, it remains to show the following.

\begin{prop}\label{prop:tail_horner} Assume~\eqref{eq:freq_horner}. Then, for each $k \ge 1$, $h^k$ in~\eqref{eq:nonuniform_tails} satisfies
\[
    h^k(\ell) = O \bigl(  (  1 \vee  ( \ell - \Theta  k )  )^{  - 1 /  (  \gamma - 1  )  } \bigr),
\]
where the implied constant is independent of $k$, and $\Theta = c_0 \kappa (1 - \kappa )^{-1}$ for some absolute constant $c_0 > 0$.
Further, for any probability measure $\mu$ on $X$ with bounded density,
$\mu( \tau_1 \ge n  ) = O(  n^{-  1 / (  \gamma - 1  ) }  )$.
\end{prop}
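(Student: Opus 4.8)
The plan is to follow the two-stage scheme used in the proofs of Propositions~\ref{prop:tb_lsv} and~\ref{prop:tail_pikovsky}, adapted to the Grossmann--Horner partition structure that underlies Proposition~\ref{prop:tb_unif_gm}. \emph{Stage~1} treats the auxiliary ``good'' case where all parameters lie below $\gamma$: for a sequence $(T_j)$ of maps in~\eqref{eq:horner_def} with $\gamma_j \le \gamma$ for every $j$, I would show $\lambda(\tau_k \ge n) = O(n^{-1/(\gamma-1)})$ uniformly in $k$, whence, since $\inf_k \lambda(Y_k) > 0$, also $\mu(\tau_k \ge n) = O(n^{-1/(\gamma-1)})$ for every $\mu$ whose density with respect to $\lambda$ is bounded, in particular for $\mu = m_k$. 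The argument is the proof of Proposition~\ref{prop:tb_unif_gm} with $\gamma_+$ replaced by $\gamma$: decompose $\{\tau_k \ge n\}$ as in~\eqref{eq:mu_decomp} into the three families $\bigcup_{\ell \ge n}\Delta_\ell^{\pm}(k)$, $\bigcup_{\ell \ge n}\delta_\ell^{\pm}(k)$ and $\bigcup_{\ell \ge n-1}\delta_{1,\ell}^{\pm}(k)$; the $\Delta$-family is dominant and has $\lambda$-measure $O(n^{-1/(\gamma-1)})$ by the escape-rate estimates of~\cite{CHMV10} and~\cite[Sublemma 3.2]{MR24} applied to sequences with parameters $\le \gamma$, while the $\delta$- and $\delta_1$-families reduce, after one (resp.\ two) steps of push-forward, to the $\Delta$-family of the shifted sequence and, by~\cite[Lemma 3.3]{MR24}, are of strictly smaller order.

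\emph{Stage~2} reduces the general sequence of Theorem~\ref{thm:horner} to Stage~1. Let $\gamma_{n_1}, \gamma_{n_2}, \ldots$ be the subsequence of parameters $\le \gamma$, set $\widetilde T_j = T_{n_j}$, $G_\ell(k) = \#\{k \le j \le k+\ell-1 : \gamma_j \le \gamma\}$ and $m(k) = \min\{j : n_j \ge k\}$, and let $\widetilde\tau_k$ be the first return time to $Y$ along $\widetilde T_{m(k)}, \widetilde T_{m(k)+1}, \ldots$. The key inclusion $\bigcup_{j \ge \ell}\Delta_j^{\pm}(k) \subset \{\widetilde\tau_k \ge G_\ell(k)\}$ follows from the definition of the $\Delta$-sets together with the elementary fact that every inverse branch maps a laminar neighborhood of a neutral fixed point into itself, moving points toward that fixed point, so that inserting the omitted ``bad'' branches into a composition of inverse branches can only move the result deeper into the laminar region. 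The definitions place $\delta_\ell^{\pm}(k)$ and $\delta_{1,\ell}^{\pm}(k)$ inside the $T_k$- resp.\ $T_{k,k+1}$-preimages of shifted sets of the first kind. Pushing $\mu$ forward by $T_k$ (resp.\ $T_{k,k+1}$), whose densities relative to $\lambda$ stay uniformly bounded (they are bounded away from $\pm 1$ and vanish near the fixed points), and applying Stage~1 to the subsequence, one obtains, exactly as in the proof of Proposition~\ref{prop:tail_pikovsky},
\begin{equation*}
\mu(\tau_k \ge n) \le C\bigl( G_n(k)^{-1/(\gamma-1)} + G_{n-1}(k+1)^{-1/(\gamma-1)} + G_{n-2}(k+2)^{-1/(\gamma-1)} \bigr)
\end{equation*}
with $C$ independent of $k$, for every $\mu$ with bounded density and in particular for $\mu = m_k$. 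Finally, arguing as in~\eqref{eq:estim_gl}, condition~\eqref{eq:freq_horner} gives $G_\ell(k) \ge \tfrac{a(1-\kappa)}{2}\bigl(\ell - c_0 \kappa (1-\kappa)^{-1} k\bigr)$ for $k \ge N$ and all large $\ell$; hence $h^k(\ell) = O\bigl( (1 \vee (\ell - \Theta k))^{-1/(\gamma-1)} \bigr)$ with $\Theta = c_0\kappa(1-\kappa)^{-1}$ and implied constant independent of $k$, and the case $k = 1$ (where $G_\ell(1) \gtrsim \ell$) yields $\mu(\tau_1 \ge n) = O(n^{-1/(\gamma-1)})$.

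The main obstacle is twofold. First, one must extract from~\cite{CHMV10, MR24} a tail bound of the precise order $n^{-1/(\gamma-1)}$ (rather than $n^{-1/(\gamma_+-1)}$) for sequences all of whose parameters are $\le \gamma$: the escape from the neutral fixed points is governed by $\gamma$, but since the coefficients $a_j, b_j, C_{i,j}$ still range over their full intervals the uniformity in these parameters must be checked. Second, one must verify that every push-forward $(T_k)_*\mu$, $(T_{k,k+1})_*\mu$ appearing in Stage~2 has density uniformly bounded relative to $\lambda$; this is where $|T_j'| > 1$ away from $\pm 1$, the vanishing near the fixed points of the reciprocals of the branch derivatives, and the uniform bounds~\eqref{eq:horner_params} are all used. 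The inclusion $\bigcup_{j \ge \ell}\Delta_j^{\pm}(k) \subset \{\widetilde\tau_k \ge G_\ell(k)\}$ and the counting inequality~\eqref{eq:estim_gl} are then routine.
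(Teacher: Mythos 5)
Your proposal follows the same two-part strategy as the paper's proof: first establish the $O(n^{-1/(\gamma-1)})$ tail bound for the auxiliary sequence built from parameters $\le\gamma$ by following the proof of Proposition~\ref{prop:tb_unif_gm} with $\gamma_+$ replaced by $\gamma$, then reduce the general sequence to it via the subsequence $\widetilde T_j=T_{n_j}$, the counting function $G_\ell(k)$, the inclusion $\bigcup_{j\ge\ell}\Delta_j^-(k)\subset\{\widetilde\tau_k\ge G_\ell(k)\}$ together with the push-forward inclusions for the $\delta$- and $\delta_1$-families, and finally the lower bound on $G_\ell(k)$ as in~\eqref{eq:estim_gl}. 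The only cosmetic difference is that your intermediate bound has three $G$-terms while the paper's has four (the $\delta_1$-family needs two push-forward steps before it reduces to a $\Delta^-$-family, contributing $G_{\ell-3}(k+3)$), but this does not affect the conclusion.
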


\begin{proof}
Let $(T_k)$ be a sequence of maps as in Theorem~\ref{thm:horner}.
As in the proof of Theorem~\ref{thm:lsv}, we let  $\widetilde{T}_j =
T_{n_j}$ where $(T_{n_j})$ is the sub-sequence of maps whose parameters satisfy
$\gamma_{n_j} \le \gamma$. Set
\begin{align*}
	G_\ell(k) &=  \# \bigl( \{ n_j \: : \: j \in \bN \} \cap [k , k + \ell - 1 ]  \bigr), \\
	 \widetilde{\tau}_k(\ell) &= \inf \{  \ell \ge 1 \: : \:   \widetilde{T}_{ m(k), m(k) + \ell - 1  }(x) \in Y_{  n_{ m(k) + \ell   }  }  \}, \\
	 m(k) &= \min \{  j \ge 1 \: : \: n_j \ge k  \}.
\end{align*}
By the definition of $\Delta_n^{-}(k)$ we have $\cup_{  j \ge \ell  } \Delta_j^{-}(k) \subset \{   \widetilde{\tau}_k \ge G_\ell(k)  \}$.
Consequently, using the inclusions
$\Delta_{\ell}^{ + } (k) \subset T_k^{-1} \Delta_{\ell - 1}^{-}(k+1)$, $\delta^{\pm}_\ell(k) \subset T_k^{-1} \Delta_{\ell-1}^{+}(k+1)$,
and $\delta_{1,\ell}^{\pm}(k) \subset T_k^{-1} \delta_\ell^{+} (k + 1)$ together with the assumption that the density of
$\mu$ is bounded, following the proof of Proposition~\ref{prop:tb_unif_gm}, we obtain
\begin{align*}
	\mu(  \tau_k \ge \ell  ) = I + II + III \le C \sum_{j=0}^3  G_{\ell - j }(k + j)^{ - 1/(\gamma - 1)  }  \quad \forall \ell  \ge 3,
\end{align*}
where $C$ is independent of $k$.
Finally, as in~\eqref{eq:estim_gl}, by invoking~\eqref{eq:freq_horner} we see that for some absolute constant $c_0$,
$$
G_\ell(k) \ge a ( 1 -  \kappa  )  \biggl(  \ell -  \frac{c_0 \kappa }{ 1 - \kappa } k   \biggr),
$$
whenever $k \ge N$ and $\ell$ is sufficiently large (depending on $\kappa$).
\end{proof}

\section{Proof of Theorem~\ref{thm:decdec}}\label{sec:proof}

Define $C_h = 2 e^{K_2 \diam X}$ and
\begin{equation}
    \label{eq:hnk}
    h_n^k(\ell)
    = C_h \bigl( h^k(n + \ell) + h^{k + 1}(n + \ell - 1) + \ldots + h^{k + n}(\ell) \bigr)
    .
\end{equation}

\begin{prop}
    \label{prop:onedec} Let $k \ge 1$.
    Suppose that $\mu$ is a probability measure on $Y_k$ with $|\mu|_{\LL,k} \leq K_2$.
    \begin{enumerate}[label=(\alph*)]
        \item\label{prop:onedec:tail}
            For every $n \geq 0$, the measure $(T_{k,k + n - 1})_* \mu$
            is regular with tail bound $\frac{1}{2} h^k_n$
            with respect to $T_{k+n}, T_{k+n+1}, \ldots$ \smallskip
        \item\label{prop:onedec:dec}
            There is a constant $\theta \in (0,1)$, depending only on $K_1$, $K_2$, $\diam X$ and $\delta_0$,
            such that for every $n \geq n_0$,
            \[
                (T_{k,k + n - 1})_* \mu
                = \theta m_{k+n} + (1-\theta) \mu'
                ,
            \]
            where $\mu'$ is a regular probability measure
            with tail bound $h^k_n$ with respect to
            $T_{k+n}, T_{k+n+1}, \ldots$
    \end{enumerate}
\end{prop}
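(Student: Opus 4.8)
The plan is to prove part~(a) by induction on $n$, and then to deduce part~(b) from it.

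\textbf{Part (a).} For $n=0$ there is nothing to do: $(T_{k,k-1})_*\mu=\mu$, and since $\mu$ is a probability measure on $Y_k$ with $|\mu|_{\LL,k}\le K_2$, Proposition~\ref{prop:regular}(a) gives that $\mu$ is regular with tail bound $e^{K_2\diam X}h^k=\tfrac12 h^k_0$. For the inductive step at $n\ge 1$, I would decompose $\mu$ along its first-return partition at level $k$: writing $\mu_\ell:=\mu|_{\{\tau_k=\ell\}}$ and $c_\ell:=\mu_\ell(X)$, one has $(T_{k,k+n-1})_*\mu=\sum_{\ell\le n}(T_{k+\ell,k+n-1})_*\bigl((T_{k,k+\ell-1})_*\mu_\ell\bigr)+\sum_{\ell>n}(T_{k,k+n-1})_*\mu_\ell$. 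Since $\mu$ is regular (Proposition~\ref{prop:regular}(a)), each $c_\ell^{-1}(T_{k,k+\ell-1})_*\mu_\ell$ is a probability measure on $Y_{k+\ell}$ with $\LL$-seminorm at most $K_1\le K_2$, so the inductive hypothesis applies to it at level $k+\ell$ and time $n-\ell<n$; the terms with $\ell>n$ are supported in $X\setminus Y_{k+n}$ with $\tau_{k+n}=\ell-n$ there, so restricted to $\{\tau_{k+n}\ge j\}$ (with $j\ge1$) their total mass is $\mu(\tau_k\ge n+j)\le e^{K_2\diam X}h^k(n+j)$. Regularity of $(T_{k,k+n-1})_*\mu$ then follows from the inductive hypothesis, the regularity of $\mu$ itself (for the $\ell>n$ terms, which after a further pushforward reassemble the pieces $(T_{k,k+\ell-1})_*(\mu|_{\{\tau_k=\ell\}})$), and the sum rule $\bigl|\sum_j\nu_j\bigr|_{\LL}\le\sup_j|\nu_j|_{\LL}$ used in Proposition~\ref{prop:regular}(b). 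For the tail bound, summing contributions to $\{\tau_{k+n}\ge j\}$ gives at most $e^{K_2\diam X}h^k(n+j)+\sum_{\ell\le n}c_\ell\cdot\tfrac12 h^{k+\ell}_{n-\ell}(j)$; expanding $h^{k+\ell}_{n-\ell}$ by~\eqref{eq:hnk}, interchanging the order of summation, and using $\sum_{\ell\le m}c_\ell\le 1$ collapses this to exactly $\tfrac12 h^k_n(j)$. The values $C_h=2e^{K_2\diam X}$ and the prefactor $\tfrac12$ are chosen precisely so that this telescoping is an identity.

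\textbf{Part (b).} Write $\nu_n=(T_{k,k+n-1})_*\mu$. From $|\mu|_{\LL,k}\le K_2$ and $\mu(Y_k)=1$ one gets $d\mu/dm_k\ge e^{-K_2\diam X}$, hence $\nu_n\ge e^{-K_2\diam X}(T_{k,k+n-1})_*m_k$; by the mixing assumption~\ref{ass:mixing}, for $n\ge n_0$ this yields $\nu_n(Y_{k+n})\ge\delta_0 e^{-K_2\diam X}$. Combined with $\bigl|\nu_n|_{Y_{k+n}}\bigr|_{\LL,k+n}\le K_1$ (Proposition~\ref{prop:regular}(c), valid since $n\ge n_0\ge1$), the log-Lipschitz control forces $\rho'':=d(\nu_n|_{Y_{k+n}})/dm_{k+n}\ge\theta_1:=\delta_0 e^{-(K_1+K_2)\diam X}>0$ pointwise. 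I would then fix $\theta\in(0,\theta_1)$ with $\theta\le\tfrac12$ and $\theta/\theta_1\le 1-K_1/K_2$ --- possible because $K_1<K_2$ by Proposition~\ref{prop:K} --- and set $\mu':=(1-\theta)^{-1}(\nu_n-\theta m_{k+n})$. Since $\rho''\ge\theta_1>\theta$ and $m_{k+n}$ is carried by $Y_{k+n}$, $\mu'$ is a probability measure and $\nu_n=\theta m_{k+n}+(1-\theta)\mu'$. On $Y_{k+n}$ the density of $\mu'$ is $(\rho''-\theta)/(1-\theta)$; an elementary estimate (using $\rho''\ge\theta_1$ and $\theta/\theta_1\le 1-K_1/K_2$) shows its $\LL$-seminorm is $\le K_2$, so each first-return piece of $\mu'|_{Y_{k+n}}$, pushed forward by its return map, has $\LL$-seminorm $\le K+\lambda^{-1}K_2=K_1$; the part $\mu'|_{X\setminus Y_{k+n}}=(1-\theta)^{-1}\nu_n|_{X\setminus Y_{k+n}}$ is regular with respect to $T_{k+n},T_{k+n+1},\ldots$ by Proposition~\ref{prop:regular}(c) applied to $\nu_n$, which is regular by part~(a). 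Adding the two parts and using the sum rule shows $\mu'$ is regular. Finally $\mu'(\tau_{k+n}\ge j)\le(1-\theta)^{-1}\nu_n(\tau_{k+n}\ge j)\le(1-\theta)^{-1}\tfrac12 h^k_n(j)\le h^k_n(j)$ by part~(a) and $\theta\le\tfrac12$, which is the claimed tail bound.

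\textbf{Main obstacle.} The delicate step is the regularity of the remainder $\mu'$ in part~(b): after peeling off $\theta m_{k+n}$ from $\nu_n|_{Y_{k+n}}$, the leftover must still propagate through the scheme, i.e.\ it must satisfy the regularity bound with the \emph{same} constant $K_1$, not a larger one. This is exactly where the strict inequality $K_1<K_2$ and the strictly positive lower bound $\theta_1>0$ (the quantitative content of~\ref{ass:mixing}) are essential: they permit $\theta$ to be chosen small enough that subtracting the constant $\theta$ from $\rho''$ inflates the logarithmic Lipschitz constant of the density only from $K_1$ up to at most $K_2$, after which a single application of a return map contracts it back to $K_1$.
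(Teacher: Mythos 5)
Your proof is correct. The structure of part (b) is essentially identical to the paper's: peel off $\theta\,m_{k+n}$ from the restriction to $Y_{k+n}$, verify that the leftover density on $Y_{k+n}$ still has $\LL$-seminorm at most $K_2$, and combine with the (automatically regular) part off $Y_{k+n}$. The only difference is that the paper simply cites \cite[Lemma~3.4]{KKM19} for the fact that one can split off $\theta' m_{k+n}$ while staying below $K_2$, whereas you unpack that lemma: the lower bound $\rho''\ge\theta_1=\delta_0 e^{-(K_1+K_2)\diam X}$ and the derivative comparison $1+\frac{K_2}{K_1}(e^{K_1 d}-1)\le e^{K_2 d}$ are precisely the content of the cited lemma, so this is the same argument, just self-contained.

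Part (a) genuinely differs in bookkeeping. The paper avoids induction: it partitions $Y_k$ directly into the sets $Y_j''$ of points whose \emph{last} visit to the reference sets before time $n$ occurs at time $n-j$ ($j=0,\ldots,n$), pushes forward the restriction of $\mu$ to $Y_{n-j}''$ up to time $n-j$, restricts to $Y_{k+n-j}$, and then applies a time shift of length $j$ to read off the tail; the bound $\tfrac12 h^k_n$ appears in one pass. You instead partition by the \emph{first} return time $\tau_k=\ell$ and induct on $n$, applying the inductive hypothesis to the pushforward of each piece at time $\ell$ and treating $\ell>n$ separately. Both schemes track the same information (times of passage through the $Y_j$'s), one from the future end and one from the past end, and both produce the same telescoping sum collapsing into $\tfrac12 h^k_n$ via $\sum_{\ell\le m}c_\ell\le 1$. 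Your phrasing \say{collapses this to exactly $\tfrac12 h^k_n(j)$} should really read \say{is bounded by}, since $\sum_{\ell\le m}c_\ell\le 1$ is an inequality, but you clearly use it as such and the direction is right. The paper's direct route is slightly cleaner in that it dispenses with the induction; yours is perhaps more natural if one is thinking in renewal terms, and it makes explicit why $C_h=2e^{K_2\diam X}$ is the right normalization.
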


\begin{proof} We prove~\ref{prop:onedec:tail} first.
    Fix $n \geq 0$ and $k \ge 1$. For each $0 \leq j \le n$, define
    \begin{align*}
        Y'_j
        & = \{ y \in Y_{k + n - j } : T_{k + n - j, k + \ell - 1 }(y) \notin Y_{k + \ell }  \text{ for all $n-j <  \ell \le n$}  \}
        ,
        \\
        Y_j''
        & = Y_{k} \cap   T_{k, k + n - j - 1}^{-1} (Y_j')
        \\
        & = \{ y \in Y_k : T_{k, k + n - j - 1}(y) \in Y_{k + n - j}
        \text{ and $T_{k,k + \ell - 1}(y) \notin Y_{k + \ell }$ for all $n - j < \ell \le n$} \}.
    \end{align*}
    In particular, $Y_0' = Y_{k + n}$ and $Y''_0 = \{ y \in Y_k : T_{k, k + n - 1 }(y) \in Y_{k+n} \}$.

    Observe that the sets $Y_j''$ form a partition of $Y_k$, so we can write
    \begin{align}\label{eq:mu_pw_decomp}
        ( T_{k,k + n - 1}  )_*\mu
        = \sum_{j=0}^n ( T_{k,k + n - 1}  )_*\mu_j,
    \end{align}
    where $\mu_j$ is the restriction of $\mu$ to $Y_j''$.

    Next, define
    $\nu_j = (( T_{k, k + n - j - 1} )_*\mu)|_{ Y_{k+n-j} }$ for $0 \le j \le n$,
    and note that for all  $B \subset X$,
    \begin{align*}
        (  T_{k + n - j, k + n - 1} )_* ( \nu_j |_{Y_j'}  ) (B)
        & = \nu_j ( Y_j' \cap T_{k + n - j, k + n - 1}^{-1} (B) ) \\
        &= \mu(   T_{k, k + n - j - 1}^{-1} (Y_j')   \cap T_{k,k + n - 1}^{-1}(B)) \\
        &= (T_{k,k + n - 1} )_*\mu_j(B).
    \end{align*}
	Hence, $(T_{k + n - j, k + n - 1} )_* ( \nu_j |_{Y_j'} ) = (T_{k,k + n - 1} )_*\mu_j$.

    By Proposition~\ref{prop:regular}, $|\nu_j|_{\LL, k + n - j} \leq K_2$. Since $\nu_j(X) \leq 1$, we deduce that
    $\nu_j \leq e^{K_2 \diam X} m_{k + n - j}$ and thus the tail of $\nu_j$ with respect to
    $T_{k + n - j}, T_{k + n - j + 1}, \ldots$ is bounded by $e^{K_2 \diam X} h^{k + n - j}$.
    Observe that $(T_{k + n - j, k + n - 1} )_* ( \nu_j |_{Y_j'} )$
    inherits the tail bound from $\nu_j$ with a time shift, namely
    $(T_{k + n - j, k + n - 1} )_* ( \nu_j |_{Y_j'} )$
    has tail bound $e^{K_2 \diam X} h^{k + n - j}(\cdot + j)$
    with respect to $T_{k + n}, T_{k + n + 1}, \ldots$
    By~\eqref{eq:mu_pw_decomp}, it follows that $(T_{k,k + n - 1})_* \mu$ has tail bound
    $e^{K_2 \diam X} \sum_{j=0}^n h^{k + n - j}(\cdot + j)$
    with respect to $T_{k + n}, T_{k + n + 1}, \ldots$, as required. Moreover,
    $(T_{k,k + n - 1})_* \mu$ is regular w.r.t.\ $T_{k+n}, T_{k+n+1}, \ldots$
    by Proposition~\ref{prop:regular}.

    It remains to prove~\ref{prop:onedec:dec}.
    Let $\theta_0 \in (0,1)$ be such that for every $\theta' \in [0,\theta_0]$,
    every measure $\rho$ on $Y_k$ with $|\rho|_{\LL, k} \leq K_1$  can be written as
    $\rho = \rho(Y_k) \theta' m_k + \rho'_k$ with $|\rho'_k|_{\LL,k} \leq K_2$.
    Such $\theta_0$ exists and only depends on $K_1, K_2$ and $\diam(X)$, see~\cite[Lemma~3.4]{KKM19}.

    Suppose that $n \geq n_0$, and let $\rho_n = \bigl((T_{k,k + n - 1})_* \mu\bigr)\big|_{Y_{k + n}}$.
    By Proposition~\ref{prop:regular}, $\bigl| \rho_n \bigr|_{\LL, k + n} \leq K_2$, and by~\ref{ass:mixing}, $\rho_n(Y_{k+n}) \geq \delta_0 e^{- \diam (X) K_2}$.
    Let $\theta = \min \{  \theta_0 \delta_0 e^{- \diam(X) K_2}  , 1/2\}$.
    Then
    \[
        \rho_n = \theta m_{k + n} + \rho'_{k + n}
        \qquad \text{with } |\rho'_{k+n}|_{\LL, k + n} \leq K_2
        .
    \]
    Define
    \[
        \mu'
        = (1-\theta)^{-1} \bigl( (T_{k,k + n - 1})_* \mu - \theta m_{k + n} \bigr)
        = (1-\theta)^{-1} \Bigl( \rho'_{k+n} + \bigl((T_{k,k+n-1})_*
        \mu \bigr)\big|_{X \setminus Y} \Bigr)
        .
    \]
    Then $\mu'$ is a probability measure and
    $(T_{k,k+n-1})_* \mu = \theta m_{k+n} + (1-\theta) \mu'$.
    By Proposition~\ref{prop:regular},
    $\rho'_{k+n}$ and $\bigl((T_{k,k+n-1})_* \mu \bigr)\big|_{X \setminus Y}$
    are regular measures w.r.t.\ $T_{k+n}, T_{k+n+1}, \ldots$ hence so is $\mu'$.
    To bound the tail of $\mu'$, we note that
    $\mu' \leq (1-\theta)^{-1} (T_{k,k+n-1})_* \mu$ with $(1-\theta)^{-1} \leq 2$
    and apply the bound on the tail of $(T_{k,k + n - 1})_* \mu$ from~\ref{prop:onedec:tail}.
\end{proof}

Fix $\theta$ as in Proposition~\ref{prop:onedec}.  We can translate the decomposition
of a pushforward $(T_{k, k + N -1})_*\mu = \theta m_{ k + N } + (1 - \theta) \mu'$ into a corresponding decomposition of $\mu$
by simply pulling back measures:

\begin{cor}
    \label{cor:onedec}
    Suppose that $N \geq 0$, $k \ge 1$,
    and $\mu$ is a probability measure on $X$
    such that $(T_{k,k+N - 1 })_* \mu$
    is supported on $Y_{k + N}$ and $|(T_{k,k+N - 1})_* \mu|_{\LL, k + N} \leq K_2$.
    Then for every $n \geq N + n_0$,
    \[
        \mu = \theta \mu_n + (1-\theta) \mu'_n
        ,
    \]
    where $\mu_n$, $\mu'_n$ are probability measures with
    $(T_{k,k+n - 1 })_* \mu_n = m_{k+n}$
    and $(T_{k,k+n - 1})_* \mu'_n$ is regular with tail bound $h_{n-N}^{k+N}$
    w.r.t.\ $T_{k + n}, T_{k + n + 1}, \ldots$
\end{cor}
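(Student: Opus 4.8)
The plan is to deduce the corollary from Proposition~\ref{prop:onedec}\ref{prop:onedec:dec} by a push-forward/pull-back argument, exactly as the remark preceding the statement suggests. First I would set $\tmu = (T_{k,k+N-1})_*\mu$. By hypothesis $\tmu$ is a probability measure supported on $Y_{k+N}$ with $|\tmu|_{\LL,k+N}\le K_2$, so Proposition~\ref{prop:onedec}\ref{prop:onedec:dec}, applied with base index $k+N$ in place of $k$ and with $n-N$ in place of $n$ (legitimate since $n\ge N+n_0$ forces $n-N\ge n_0$), yields
\[
    (T_{k+N,\,k+n-1})_* \tmu = \theta\, m_{k+n} + (1-\theta)\, \mu',
\]
where $\mu'$ is a regular probability measure with tail bound $h^{k+N}_{n-N}$ with respect to $T_{k+n}, T_{k+n+1}, \ldots$ Since $T_{k+N,\,k+n-1}\circ T_{k,\,k+N-1} = T_{k,\,k+n-1}$, the left-hand side equals $(T_{k,\,k+n-1})_*\mu$, so we have obtained a decomposition of $(T_{k,\,k+n-1})_*\mu$ into the pieces $\theta m_{k+n}$ and $(1-\theta)\mu'$.

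It then remains to transport this decomposition back to $\mu$ itself. Write $\Phi = T_{k,\,k+n-1}$ and $\bar\mu = \Phi_*\mu$. From the displayed identity we read off $\theta m_{k+n}\le\bar\mu$ and $(1-\theta)\mu' = \bar\mu - \theta m_{k+n}\le\bar\mu$, so $m_{k+n}$ and $\mu'$ are absolutely continuous with respect to $\bar\mu$; let $\phi = dm_{k+n}/d\bar\mu$ and $\psi = d\mu'/d\bar\mu$, which satisfy $\theta\phi + (1-\theta)\psi = 1$ for $\bar\mu$-a.e.\ point. The compositions $\phi\circ\Phi$ and $\psi\circ\Phi$ are defined $\mu$-a.e.\ (the $\Phi$-preimage of a $\bar\mu$-null set is $\mu$-null), and I would define $\mu_n$, $\mu'_n$ by $d\mu_n = (\phi\circ\Phi)\,d\mu$ and $d\mu'_n = (\psi\circ\Phi)\,d\mu$. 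Using the change-of-variables identity $\int (g\circ\Phi)\,d\mu = \int g\,d\bar\mu$ for nonnegative measurable $g$, one checks directly that $\mu_n$ and $\mu'_n$ are probability measures, that $\theta\mu_n + (1-\theta)\mu'_n = \mu$, and that $\Phi_*\mu_n = m_{k+n}$ and $\Phi_*\mu'_n = \mu'$. The last identity together with Proposition~\ref{prop:onedec}\ref{prop:onedec:dec} shows that $(T_{k,\,k+n-1})_*\mu'_n = \mu'$ is regular with tail bound $h^{k+N}_{n-N}$ with respect to $T_{k+n}, T_{k+n+1}, \ldots$, which is the required conclusion.

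There is no serious obstacle here: the argument is bookkeeping of the index shift (base time $k\mapsto k+N$, return depth $n\mapsto n-N$) combined with the elementary fact that a splitting of a push-forward $\Phi_*\mu$ into absolutely continuous components pulls back to a splitting of $\mu$ by composing the corresponding Radon--Nikodym derivatives with $\Phi$; in particular no disintegration of $\mu$ along the fibres of $\Phi$ is needed. The two minor points to keep track of are that $\mu$ need not be regular (which is precisely why one pushes forward to time $k+N$ first, where Proposition~\ref{prop:onedec} applies), and that $\phi,\psi$ are only defined $\bar\mu$-a.e., so one should note that the pull-backs $\phi\circ\Phi,\psi\circ\Phi$ are well defined $\mu$-a.e.\ before integrating.
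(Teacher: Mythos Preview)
Your proof is correct and follows essentially the same approach as the paper: apply Proposition~\ref{prop:onedec}\ref{prop:onedec:dec} to $(T_{k,k+N-1})_*\mu$ with the index shift $k\mapsto k+N$, $n\mapsto n-N$, then pull back the resulting decomposition of $(T_{k,k+n-1})_*\mu$ by composing the Radon--Nikodym derivatives with $T_{k,k+n-1}$. Your write-up is in fact slightly more careful than the paper's about the absolute continuity and $\mu$-a.e.\ issues.
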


\begin{proof}
    Fix $n \geq N + n_0$.
    Proposition~\ref{prop:onedec} gives the decomposition
    $(T_{k, k + n - 1 })_* \mu =
    ( T_{k + N, k + n - 1} )_* ((T_{k,k + N-1})_* \mu) = \theta m_{k+n} + (1-\theta) \mu'$,
    where $\mu'$ is a regular probability measure with tail bound
    $h_{n-N}^{k+N}$ w.r.t.\ $T_{k + n}, T_{k + n + 1}, \ldots$
    Define $\mu_n$ and $\mu_n'$ by
    \[
        d \mu_n =
        \Bigl( \frac{dm_{k+n}}{d(T_{k,k+n-1})_* \mu} \circ T_{k,k+n-1}
        \Bigr) \, d \mu
        \quad \text{and} \quad
        d \mu'_n = \Bigl( \frac{d\mu'}{d(T_{k,k+n-1})_* \mu}
        \circ T_{k,k+n-1} \Bigr) \, d \mu
        .
    \]
    It is straightforward that $\mu = \theta \mu_n + (1-\theta) \mu'_n$ with
    $(T_{k,k+n-1})_*\mu_n = m_{k+n}$ and $(T_{k,k+n-1})_*\mu'_n = \mu'$.
    This is the desired decomposition.
\end{proof}

\begin{lem}
    \label{lem:mixdec}
    Suppose that $\mu$ is a regular probability
    measure with tail bound $r$ w.r.t.\
    $T_k, T_{k+1}, \ldots$ where $k \ge 1$. Suppose that $r(n)$ is
    nonincreasing, $r(1) = 1$ and $\lim_{n \to \infty} r(n) = 0$.
    Then
    \[
        \mu = \sum_{j=n_0+1}^{\infty} \alpha_j [\theta \mu_j + (1-\theta) \mu'_j ]
        ,
    \]
    where $\alpha_j = r(j-n_0) - r(j+1-n_0)$ and $\mu_j$, $\mu'_j$ are probability measures such that
    $(T_{k,k+j-1})_* \mu_j = m_{k+j}$ and
    $(T_{k,k+j-1})_* \mu'_j$ is regular with tail bound $h_{j}^{k}$
    w.r.t.\ $T_{k+j}, T_{k+j+1}, \ldots$
\end{lem}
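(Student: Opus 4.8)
The plan is to decompose $\mu$ according to its first return time $\tau_k$, split each piece with Corollary~\ref{cor:onedec}, and then use a transportation argument to redistribute the mass so that the prescribed coefficients $\alpha_j = r(j-n_0) - r(j+1-n_0)$ appear. Concretely, write $\mu = \sum_{\ell \ge 1}\mu|_{a_{k,\ell}}$ with $a_{k,\ell} = \{\tau_k = \ell\}$ and $c_\ell = \mu(a_{k,\ell})$. For each $\ell$ with $c_\ell > 0$ the probability measure $\hat\mu_\ell := c_\ell^{-1}\mu|_{a_{k,\ell}}$ has $(T_{k,k+\ell-1})_*\hat\mu_\ell$ supported on $Y_{k+\ell}$ with log-Lipschitz seminorm $\le K_1 \le K_2$, by the regularity of $\mu$ (that is,~\eqref{eq:regular}, the rescaling by $c_\ell^{-1}$ leaving the seminorm of the log-density unchanged). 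Corollary~\ref{cor:onedec} with $N = \ell$ then produces, for every $j \ge \ell + n_0$, a splitting $\hat\mu_\ell = \theta\nu_{\ell,j} + (1-\theta)\nu'_{\ell,j}$ into probability measures with $(T_{k,k+j-1})_*\nu_{\ell,j} = m_{k+j}$ and $(T_{k,k+j-1})_*\nu'_{\ell,j}$ regular with tail bound $h_{j-\ell}^{k+\ell}$ w.r.t.\ $T_{k+j}, T_{k+j+1}, \ldots$ A direct comparison of the defining sums in~\eqref{eq:hnk} gives $h_{j-\ell}^{k+\ell} \le h_j^{k}$ pointwise (the summands of the former are exactly those of the latter with index $\ge \ell$), so each such pushforward has the uniform tail bound $h_j^k$.

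Next I would select nonnegative weights $w_{\ell,j}$, indexed by pairs with $j \ge \ell + n_0$, having row sums $\sum_j w_{\ell,j} = c_\ell$ for every $\ell$ and column sums $\sum_\ell w_{\ell,j} = \alpha_j$ for every $j \ge n_0+1$. Because $r$ is nonincreasing with $r(1) = 1$ and $r(n) \to 0$, the $\alpha_j$ are nonnegative with $\sum_{j \ge n_0+1}\alpha_j = 1 = \sum_\ell c_\ell$, and a feasible transport plan on this upper-triangular support exists thanks to the cumulative compatibility inequality
\[
\sum_{\ell=1}^m c_\ell = \mu(\tau_k \le m) = 1 - \mu(\tau_k \ge m+1) \ge 1 - r(m+1) = \sum_{j=n_0+1}^{m+n_0}\alpha_j \qquad (m \ge 0),
\]
which follows from the tail bound~\eqref{eq:tail_bound}; such a $w$ can be constructed greedily.

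Finally I would assemble the decomposition: splitting each restriction proportionally, $\mu|_{a_{k,\ell}} = \sum_{j \ge \ell+n_0}\frac{w_{\ell,j}}{c_\ell}\,\mu|_{a_{k,\ell}} = \sum_{j\ge\ell+n_0} w_{\ell,j}\bigl[\theta\nu_{\ell,j} + (1-\theta)\nu'_{\ell,j}\bigr]$, summing over the $\ell$ with $c_\ell > 0$ (the others contributing the zero measure), and regrouping the nonnegative terms by $j$, I obtain $\mu = \sum_{j\ge n_0+1}\alpha_j[\theta\mu_j + (1-\theta)\mu'_j]$ with $\mu_j := \alpha_j^{-1}\sum_\ell w_{\ell,j}\nu_{\ell,j}$ and $\mu'_j := \alpha_j^{-1}\sum_\ell w_{\ell,j}\nu'_{\ell,j}$ (and $\mu_j,\mu'_j$ arbitrary when $\alpha_j = 0$). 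These are convex combinations of probability measures, hence probability measures; linearity of the pushforward gives $(T_{k,k+j-1})_*\mu_j = m_{k+j}$, and $(T_{k,k+j-1})_*\mu'_j$ is a convex combination of regular measures with tail bound $h_j^k$, hence itself regular with that tail bound by Proposition~\ref{prop:regular}.

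The only genuinely non-routine point is spotting the compatibility inequality above, which is exactly where the hypotheses on $r$ (monotone, $r(1) = 1$, $r(n) \to 0$) and on the tail of $\mu$ get consumed; everything else is bookkeeping, including the pointwise comparison $h_{j-\ell}^{k+\ell} \le h_j^k$ that merges the tail bounds of the individual pieces. An equivalent, more hands-on route avoiding the transport-plan language is to process $\ell = 1, 2, \dots$ sequentially, keeping a ``pool'' of already-returned but as-yet-unassigned mass and releasing precisely $\alpha_{\ell+n_0}$ of it at step $\ell$ (splitting the released portion with Corollary~\ref{cor:onedec} at time $j = \ell+n_0$); the same cumulative inequality guarantees the pool is never prematurely empty and that its total mass vanishes as $\ell \to \infty$, so the resulting series converges to $\mu$.
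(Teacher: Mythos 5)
Your proof is correct and follows essentially the same route as the paper: decompose $\mu$ by return time, split each piece $\mu|_{a_{k,\ell}}$ via Corollary~\ref{cor:onedec}, and redistribute the mass by a nonnegative array supported on the upper triangle $\{j \ge \ell + n_0\}$ with row sums $c_\ell$ and column sums $\alpha_j$, using the pointwise comparison $h^{k+\ell}_{j-\ell}\le h^k_j$ to merge the tail bounds. The paper records the same cumulative compatibility inequality and then invokes~\cite[Proposition~4.7]{KKM19} to produce the redistribution weights $\xi_{\ell,n}$, whereas you re-derive that existence statement directly via the greedy (northwest-corner) construction, making the argument self-contained at that step.
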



\begin{proof}
    Recall the partition $\cP_k$ of
    $X$ corresponding to $T_k, T_{k+1}, \ldots$ For all $n \ge
    1$, define
    \[
        a_{k,n}
        = \cup \{ a \in \cP_k : \tau_k(a) = n \}
        .
    \]
    Let $\nu_n = \mu|_{a_{k,n}}$.
    Then, for each $n \geq 1$, the measure
    $(T_{k,k+n-1})_* \nu_n$ is supported on $Y_{k+n}$ and
    satisfies $|(T_{k,k+n-1})_* \nu_n|_{\LL, k + n} \leq K_1$.
    By Corollary~\ref{cor:onedec}, for each $\ell \geq n + n_0$,
    \begin{equation}
        \label{eq:gggn}
        \nu_n = \mu(a_{k,n}) \bigl[ \theta \nu_{n, \ell} +
        (1-\theta) \nu_{n,\ell}' \bigr]
        ,
    \end{equation}
    where $\nu_{n,\ell}$, $\nu_{n,\ell}'$ are probability measures with
    $(T_{k,k+\ell-1})_* \nu_{n,\ell} = m_{k + \ell}$
    and $(T_{k,k+\ell-1})_* \nu_{n,\ell}'$ is regular
    with tail bound $h_{\ell-n}^{k+n}$ w.r.t.\
    $T_{k + \ell}, T_{k + \ell + 1}, \ldots$

    We observe that 
    \[
        \sum_{n=1}^\ell \mu(a_{k,n}) \geq r(1) - r(\ell + 1)
        \quad \text{ and } \quad
        \sum_{n=1}^\infty \mu(a_{k,n}) = r(1) - \lim_{\ell \to \infty} r(\ell) = 1
        .
    \]
    By~\cite[Proposition~4.7]{KKM19} there exist nonnegative constants
    $\xi_{\ell,n}$, $1 \leq n \leq \ell < \infty$, such that
    \begin{align*}
        \sum_{n = 1}^{\ell} \xi_{\ell,n} \mu(a_{k,n})
        & = r(\ell) - r(\ell+1)
        & & \text{ for each } \ell,
        \\
        \sum_{\ell = n}^\infty \xi_{\ell,n}
        &= 1
        & & \text{ for each } n
        .
    \end{align*}

    For $j \geq n_0 + 1$, define
    $\chi_j = \sum_{n=1}^{j - n_0} \xi_{j-n_0,n} \nu_n$. Then $\mu = \sum_{j=n_0+1}^\infty \chi_j$ and
    $\chi_j(X) = \alpha_j$. From~\eqref{eq:gggn}, we have
    \[
        \chi_j = \alpha_j [ \theta \mu_j + (1-\theta) \mu'_j ]
    \]
    with $\mu_j  = \alpha_j^{-1} \sum_{n=1}^{j-n_0} \xi_{j-n_0, n} \mu(a_{k,n}) \nu_{n,j}$
    and  $\mu'_j = \alpha_j^{-1} \sum_{n=1}^{j-n_0} \xi_{j-n_0, n} \mu(a_{k,n}) \nu'_{n,j}$.
    It is possible that $\alpha_j = 0$, but this does not create problems and we ignore it
    for simplicity.

    It remains to observe that $\mu_j$ and $\mu'_j$
    are probability measures with $(T_{k,k+j-1})_* \mu_j = m_{k+j}$
    and $(T_{k,k+j-1})_* \mu'_j$ is regular with tail
    bound $\max \{ h^{k+ 1}_{j - 1 }, \ldots,
    h^{k+ j - n_0 }_{ n_0 } \} \le h_{j-1}^{k+1} \le h_j^k$
    w.r.t.\ $T_{k+j}, T_{k+j+2}, \ldots$
\end{proof}

Similar to Corollary~\ref{cor:onedec}, we obtain the following result:

\begin{cor}
    \label{cor:mixdec}
    Suppose that $N \geq 0$, $k \ge 1$,
    and $\mu$ is a probability measure such that
    $(T_{k,k + N - 1})_* \mu$ is regular and has tail
    bound $r$ w.r.t.\ $T_{k + N}, T_{k+N+1}, \ldots$, where $r(n)$ is
    nonincreasing, $r(1) = 1$
    and $\lim_{n \to \infty} r(n) = 0$.
    Then
    \begin{align}\label{eq:dec_pullback_ctbl}
        \mu = \sum_{j=n_0+1}^{\infty} \alpha_j [\theta \mu_j + (1-\theta) \mu'_j ]
        ,
    \end{align}
    where $\alpha_j = r(j-n_0) - r(j+1-n_0)$, and $\mu_j$, $\mu'_j$
    are probability measures such that
    $(T_{k,k + N+j - 1})_* \mu_j = m_{k + N + j}$ and
    $(T_{k,k + N + j -1})_* \mu'_j$ is regular with tail bound
    $h^{k+N}_j$ w.r.t.\ $T_{k+N+j}, T_{k+N+j+1}, \ldots$
\end{cor}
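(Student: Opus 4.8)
The plan is to obtain Corollary~\ref{cor:mixdec} from Lemma~\ref{lem:mixdec} by the same pullback argument that produced Corollary~\ref{cor:onedec} from Proposition~\ref{prop:onedec}. Write $\nu = (T_{k,k+N-1})_*\mu$. By hypothesis $\nu$ is a regular probability measure with tail bound $r$ w.r.t.\ $T_{k+N}, T_{k+N+1},\ldots$, and $r$ is nonincreasing with $r(1)=1$ and $\lim_{n\to\infty} r(n)=0$, so Lemma~\ref{lem:mixdec}, applied with $k$ replaced by $k+N$, gives
\[
\nu = \sum_{j=n_0+1}^\infty \alpha_j\,[\theta\nu_j + (1-\theta)\nu'_j],
\qquad \alpha_j = r(j-n_0) - r(j+1-n_0),
\]
where $\nu_j,\nu'_j$ are probability measures with $(T_{k+N,k+N+j-1})_*\nu_j = m_{k+N+j}$ and $(T_{k+N,k+N+j-1})_*\nu'_j$ regular with tail bound $h^{k+N}_j$ w.r.t.\ $T_{k+N+j},T_{k+N+j+1},\ldots$

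Next I would pull this decomposition back through $T_{k,k+N-1}$. Since the summands are nonnegative and sum to $\nu$, each $\nu_j$ and each $\nu'_j$ with $\alpha_j>0$ is absolutely continuous with respect to $\nu$; indices with $\alpha_j=0$ are simply discarded, exactly as in the proof of Lemma~\ref{lem:mixdec}. Then define probability measures $\mu_j,\mu'_j$ by
\[
d\mu_j = \Bigl(\tfrac{d\nu_j}{d\nu}\circ T_{k,k+N-1}\Bigr)\,d\mu,
\qquad
d\mu'_j = \Bigl(\tfrac{d\nu'_j}{d\nu}\circ T_{k,k+N-1}\Bigr)\,d\mu.
\]
Just as in the proof of Corollary~\ref{cor:onedec}, a direct change-of-variables check shows $(T_{k,k+N-1})_*\mu_j = \nu_j$, $(T_{k,k+N-1})_*\mu'_j = \nu'_j$, and $\mu = \sum_{j\ge n_0+1}\alpha_j[\theta\mu_j + (1-\theta)\mu'_j]$.

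Finally, combining these identities with the composition rule $T_{k,k+N+j-1} = T_{k+N,k+N+j-1}\circ T_{k,k+N-1}$ gives $(T_{k,k+N+j-1})_*\mu_j = (T_{k+N,k+N+j-1})_*\nu_j = m_{k+N+j}$ and $(T_{k,k+N+j-1})_*\mu'_j = (T_{k+N,k+N+j-1})_*\nu'_j$, the latter being regular with tail bound $h^{k+N}_j$ w.r.t.\ $T_{k+N+j},T_{k+N+j+1},\ldots$, which is exactly the asserted decomposition. The whole argument is bookkeeping on top of Lemma~\ref{lem:mixdec}; the only slightly delicate points, namely the degenerate indices $\alpha_j=0$ and the well-definedness of the Radon--Nikodym derivatives, are handled precisely as in Lemma~\ref{lem:mixdec} and Corollary~\ref{cor:onedec}, so I do not anticipate a genuine obstacle here.
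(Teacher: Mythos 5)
Your proof is correct and follows exactly the route the paper indicates by saying ``Similar to Corollary~\ref{cor:onedec}, we obtain the following result'': apply Lemma~\ref{lem:mixdec} to the pushforward $\nu = (T_{k,k+N-1})_*\mu$ and then pull the decomposition back via Radon--Nikodym derivatives, finishing with the composition identity $T_{k,k+N+j-1}=T_{k+N,k+N+j-1}\circ T_{k,k+N-1}$. The only cosmetic deviation is that you take densities with respect to $\nu$ and compose with $T_{k,k+N-1}$, whereas Corollary~\ref{cor:onedec} composed with the full $T_{k,k+n-1}$; both variants verify identically, so this is the same argument.
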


Further, we suppose that $r$ is nonnegative with $\lim_{n \to \infty} r(n) = 0$
and define
\[
    \hr (n) = \min \{ 1, r(1), \ldots, r(n)\}
    .
\]
This way, $\hr$ is nonincreasing and $\hr(1) = 1$;
for a probability measure, the tail bounds $r$ and $\hr$ are equivalent.
Similarly define  $\hh^k_n$.

Let $X_1, X_2, \ldots$ be random variables with values in $\{n_0, n_0+1,\ldots\}$, such that
for all $\ell \geq n_0$,
\begin{equation}
    \label{eq:PXj}
    \begin{aligned}
        \bP(X_1 \geq \ell)
    &= \hr(\ell - n_0),
    \\
    \bP(X_{j+1} \geq \ell \mid X_1, \ldots, X_j)
    & = \hh^{k+ X_1 + \ldots + X_{j-1}}_{X_j}(\ell - n_0)
    \quad \text{ for } j \geq 1
    .
    \end{aligned}
\end{equation}
Let $\tau$ be a geometrically distributed random variable with parameter $\theta$,
independent of $\{X_j\}$. That is,
$$
\bP(\tau = \ell) = (1-\theta)^{\ell-1} \theta \quad \forall \ell \in \{1,2,\ldots\}.
$$
Define
\[
    S
    = X_1 + \ldots + X_\tau
    .
\]
Note that $S$ depends on $k$.

\begin{lem}
    \label{lem:proproprobab}
    Suppose that $\mu$ is a regular probability measure with tail bound $r$ w.r.t.\ $T_k, T_{k+1}, \ldots$
    Then there exists a decomposition
    \[
        \mu = \sum_{n=1}^\infty \bP(S = n) \mu_n,
    \]
    where $\mu_n = \mu_n^{(k)}$ are probability measures such that $(T_{k,k+n-1})_* \mu_n = m_{k+n+1}$.
\end{lem}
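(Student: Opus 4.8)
The plan is to iterate Corollary~\ref{cor:mixdec}, peeling off at each stage the part of $\mu$ whose pushforward has already been relaxed to a reference measure $m_\bullet$, and decomposing the remainder further. Tracking the combinatorial weights that arise and the tail bounds that propagate, one recognizes exactly the law of $S$.

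First a reduction. Since $\mu$ is a probability measure, $\mu(\tau_k\ge n)\le 1$ and $\mu(\tau_k\ge n)\le\mu(\tau_k\ge j)\le r(j)$ for $j\le n$, so $\mu$ also has the tail bound $\hr$, which is nonincreasing with $\hr(1)=1$ and $\hr(n)\to 0$. Likewise, any probability measure with tail bound $h^j_n$ has the tail bound $\hh^j_n$; by~\eqref{eq:hnk} the last summand of $h^j_n$ is $C_h h^{j+n}$ with $h^{j+n}(1)=1$ and $C_h\ge1$, so $\hh^j_n(1)=1$, while $\hh^j_n(\ell)\to 0$ as $h^j_n$ is a finite sum of tail functions. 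Hence Corollary~\ref{cor:mixdec} applies whenever a pushforward is regular with tail bound one of these monotonized functions.

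Now the iteration. For a tuple $(n_1,\dots,n_j)$ with each $n_i\ge n_0+1$ write $S_j=n_1+\dots+n_j$, $S_0=0$. I claim that for every $j\ge1$ there are probability measures $\sigma_{n_1,\dots,n_j}$, $\rho_{n_1,\dots,n_j}$ with
\begin{equation}\label{eq:stepj}
\begin{aligned}
\mu={}&\sum_{\ell=1}^{j}(1-\theta)^{\ell-1}\theta\sum_{(n_1,\dots,n_\ell)}\bP(X_1=n_1,\dots,X_\ell=n_\ell)\,\sigma_{n_1,\dots,n_\ell}\\
&+(1-\theta)^{j}\sum_{(n_1,\dots,n_j)}\bP(X_1=n_1,\dots,X_j=n_j)\,\rho_{n_1,\dots,n_j},
\end{aligned}
\end{equation}
where $(T_{k,k+S_\ell-1})_*\sigma_{n_1,\dots,n_\ell}=m_{k+S_\ell}$, and $(T_{k,k+S_j-1})_*\rho_{n_1,\dots,n_j}$ is regular with tail bound $\hh^{\,k+S_{j-1}}_{\,n_j}$ w.r.t.\ $T_{k+S_j},T_{k+S_j+1},\dots$ The base case $j=1$ is Corollary~\ref{cor:mixdec} applied to $\mu$ with $N=0$ and tail bound $\hr$: it gives $\mu=\sum_{n_1}\alpha_{n_1}[\theta\sigma_{n_1}+(1-\theta)\rho_{n_1}]$ with $\alpha_{n_1}=\hr(n_1-n_0)-\hr(n_1+1-n_0)=\bP(X_1=n_1)$ by~\eqref{eq:PXj}, with $(T_{k,k+n_1-1})_*\sigma_{n_1}=m_{k+n_1}$, and with $(T_{k,k+n_1-1})_*\rho_{n_1}$ regular with tail bound $h^k_{n_1}$, hence also with $\hh^{\,k+S_0}_{\,n_1}=\hh^k_{n_1}$. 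For the step $j\to j+1$ I apply Corollary~\ref{cor:mixdec} to each $\rho_{n_1,\dots,n_j}$ with $N=S_j$ and tail bound $\hh^{\,k+S_{j-1}}_{\,n_j}$: it produces $\alpha_{n_{j+1}}=\hh^{\,k+S_{j-1}}_{\,n_j}(n_{j+1}-n_0)-\hh^{\,k+S_{j-1}}_{\,n_j}(n_{j+1}+1-n_0)$, which by~\eqref{eq:PXj} equals $\bP(X_{j+1}=n_{j+1}\mid X_1=n_1,\dots,X_j=n_j)$, a stopped piece $\sigma_{n_1,\dots,n_{j+1}}$ with $(T_{k,k+S_{j+1}-1})_*\sigma_{n_1,\dots,n_{j+1}}=m_{k+S_{j+1}}$, and a running piece $\rho_{n_1,\dots,n_{j+1}}$ whose pushforward is regular with tail bound $h^{\,k+S_j}_{\,n_{j+1}}$, hence also $\hh^{\,k+S_j}_{\,n_{j+1}}$. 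Substituting into the last sum in~\eqref{eq:stepj} and using $\bP(X_1=n_1,\dots,X_j=n_j)\,\bP(X_{j+1}=n_{j+1}\mid\cdots)=\bP(X_1=n_1,\dots,X_{j+1}=n_{j+1})$ advances~\eqref{eq:stepj} to $j+1$.

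Finally, let $j\to\infty$. Each $X_i$ is a.s.\ finite (its conditional tail tends to $0$) and $\tau$ is geometric, so the running term of~\eqref{eq:stepj} is a nonnegative measure of total mass $(1-\theta)^j\to 0$ and disappears, leaving $\mu=\sum_{\ell\ge1}(1-\theta)^{\ell-1}\theta\sum_{(n_1,\dots,n_\ell)}\bP(X_1=n_1,\dots,X_\ell=n_\ell)\,\sigma_{n_1,\dots,n_\ell}$. Grouping terms by $n=S_\ell$ and using $\bP(S=n)=\sum_{\ell\ge1}(1-\theta)^{\ell-1}\theta\sum_{n_1+\dots+n_\ell=n}\bP(X_1=n_1,\dots,X_\ell=n_\ell)$ (by independence of $\tau$ and $(X_i)$), I let $\mu_n$, for $n$ with $\bP(S=n)>0$, be the induced convex combination of the $\sigma_{n_1,\dots,n_\ell}$ with $n_1+\dots+n_\ell=n$ (the remaining $\mu_n$ being immaterial). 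Then $\mu=\sum_{n\ge1}\bP(S=n)\mu_n$, each $\mu_n$ is a probability measure, $\sum_n\bP(S=n)=1$ as $S<\infty$ a.s., and $(T_{k,k+n-1})_*\mu_n=m_{k+n}$ because this holds for every $\sigma_{n_1,\dots,n_\ell}$ with $n_1+\dots+n_\ell=n$. I expect the main obstacle to be the index bookkeeping in the inductive step: one must verify that the tail bound emerging from the $j$-th application of Corollary~\ref{cor:mixdec} — namely $h^{\,k+S_{j-1}}_{\,n_j}$, with superscript shifted by $S_{j-1}$ (the time before the last jump) even though the associated pushforward time is $k+S_j$ — is exactly the function entering the conditional law~\eqref{eq:PXj} of the next increment $X_{j+1}$, so that the accumulated weights reproduce the joint law of $(X_1,\dots,X_\ell)$; the remaining points (monotonizing the tails, termwise manipulation of the countable decompositions, the $(1-\theta)^j\to0$ limit, regrouping) are routine.
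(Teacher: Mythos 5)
Your argument is correct and follows essentially the same route as the paper's proof: iterating Corollary~\ref{cor:mixdec}, identifying the weights produced at each stage with the conditional transition probabilities in~\eqref{eq:PXj}, and grouping by the total $n=n_1+\cdots+n_\ell$. You simply organize the iteration as an explicit induction with a remainder of mass $(1-\theta)^j$ that you then send to $0$, whereas the paper writes out the first few steps and invokes induction less formally; the key bookkeeping check (that the tail bound $h^{k+S_{j-1}}_{n_j}$ attached to the $j$-th running piece is exactly what~\eqref{eq:PXj} prescribes for the increment $X_{j+1}$) is carried out correctly. Note that your derivation gives $(T_{k,k+n-1})_*\mu_n = m_{k+n}$, which is consistent with Theorem~\ref{thm:decdec}; the index $k+n+1$ in the lemma statement is a typo.
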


\begin{proof} By iterating the decomposition in~\eqref{eq:dec_pullback_ctbl},
    we can express $\mu$ as an infinite series
    \begin{align}\label{eq:decompose_mu_series}
        \mu &= \theta \sum_{i > n_0} \alpha_i \mu_i
        + (1-\theta) \theta \sum_{i,j > n_0} \alpha_{i,j} \mu_{i,j}
        + (1-\theta)^2 \theta
        \sum_{i,j, \ell > n_0} \alpha_{i,j,\ell} \mu_{i,j, \ell} + \cdots
    \end{align}
    as follows.

\begin{itemize}
    \item[] \textbf{Step 1}: Apply~\eqref{eq:dec_pullback_ctbl}
    to $\mu$:
    $$
    \mu = \sum_{i > n_0} \alpha_i [
        \theta \mu_i + (1- \theta) \mu_i',
    ]
    $$
    where  $\alpha_i = \hat{r}(i - n_0) - \hat{r}(i + 1 - n_0)$,
        $(T_{k,k+i-1})_* \mu_i = m_{k+i}$, and
        $(T_{k,k+i-1})_* \mu_i'$ is regular with
        tail bound $\hat{h}^k_i$ w.r.t.\
        $T_{k+i}, T_{k+i+1}, \ldots$ \smallskip
    \item[] \textbf{Step 2}: Apply~\eqref{eq:dec_pullback_ctbl} to each
    $\mu_i'$:
    $$
        \mu = \theta \sum_{i > n_0} \alpha_i \mu_i
        + (1-\theta) \theta \sum_{i,j > n_0} \alpha_{i,j} \mu_{i,j}
        + (1-\theta)^2 \sum_{i,j > n_0} \alpha_{i,j} \mu_{i,j}',
    $$
    where $\alpha_{i,j} = \alpha_i ( \hat{h}^k_i(j-n_0) -
    \hat{h}^k_i(j + 1 -n_0) )$, $(T_{k, k+i+j-1})_* \mu_{i,j} = m_{k+i+j}$,
    and $(T_{k, k+i+j-1})_* \mu_{i,j}'$ is regular with
    tail bound $h_j^{k+i}$ w.r.t.\ $T_{k+i+j}, T_{k+i+j + 1}, \ldots$ \smallskip
    \item[] \textbf{Step 3}: Apply~\eqref{eq:dec_pullback_ctbl} to each
    $\mu_{i,j}'$:
    \begin{align*}
        \mu &= \theta \sum_{i > n_0} \alpha_i \mu_i
        + (1-\theta) \theta \sum_{i,j > n_0} \alpha_{i,j} \mu_{i,j}
        + (1-\theta)^2 \theta
        \sum_{i,j,\ell > n_0} \alpha_{i,j,\ell} \mu_{i,j, \ell} \\
        &+ (1-\theta)^3
        \sum_{i,j, \ell > n_0} \alpha_{i,j,\ell} \mu_{i,j,\ell}',
    \end{align*}
    where $\alpha_{i,j, \ell}
    = \alpha_{i,j} ( \hat{h}^{k + j}_i(\ell-n_0) -
    \hat{h}^{k + j}_{i}(\ell + 1 -n_0) )$, $(T_{k, k + i + j +
    \ell - 1})_* \mu_{i,j, \ell} = m_{k + i + j + \ell}$,
    and $(T_{k, k+i+j+\ell-1})_* \mu_{i,j,\ell}'$ is regular with
    tail bound $h_\ell^{k + i + j}$
    w.r.t.\ $T_{k+i+j+\ell}, T_{k+i+j+\ell + 1}, \ldots$  \smallskip
    \item[] And so on...
\end{itemize}

By induction, we find that
for each $n \geq 1$ and $j_1, j_2, \ldots, j_n \geq n_0$,
    \[
        (1-\theta)^{n-1} \theta \alpha_{j_1, j_2, \ldots, j_n}
        = \bP(\tau = n, X_1 = j_1, \ldots, X_n = j_n)
        .
    \]
    Grouping the terms in~\eqref{eq:decompose_mu_series} by the sum of indices,
    we obtain the required decomposition with
    \[
        \mu_n =
        \sum_{\substack{k \geq 1 \\  j_1+\cdots+j_k=n}}
        (1-\theta)^{k-1} \theta
        \alpha_{j_1, \ldots, j_k} \mu'_{j_1, \ldots, j_k}
        \Bigg/
        \sum_{\substack{k \geq 1 \\  j_1+\cdots+j_k=n}}
        (1-\theta)^{k-1} \theta
        \alpha_{j_1, \ldots, j_k}
        .
    \]
\end{proof}

To complete the proof of Theorem~\ref{thm:decdec}, it remains to estimate the tails
$\bP(S \geq n)$, as is done in the following proposition.

\begin{prop}\label{prop:Stail} There exists $\Theta \in (0,1)$ such that the following holds
for any sequence of numbers $\Theta_j \in [0,1)$ with $\sup_{j \ge 1} \Theta_j \le \Theta$.
Suppose that $\mu$ is a
regular probability measure with tail bound $r$ w.r.t.\ $T_k, T_{k+1}, \ldots$ for $k \ge 1$, and that
for some $0 < \beta' \leq \beta$, $\beta > 1$,  $C_\beta, C_\beta' \ge 1$,
$$
h^j(n) \leq C_\beta (1 \vee ( n - \Theta_j j ) )^{-\beta} \quad \text{and} \quad  r(n) \le C_\beta' (1 \vee ( n - \Theta_k k ) )^{-\beta'}
$$
hold for all $j \ge k$ and $n \ge 1$. Then, for all $n \ge 1$,
    \[
        \bP(S \geq n)
        \leq C_\beta'  C ( \Theta_k^* k + 1 )^{ \beta' } n^{- \beta'}
        ,
    \]
    where $\Theta_k^* = \sup_{j \ge k} \Theta_j$, and
    $C > 0$ is a constant depending only on $n_0, \theta, \Theta, C_h, C_\beta, \beta, \beta'$.
\end{prop}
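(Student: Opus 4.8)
The plan is to estimate $\bP(S\ge n)$ directly in the probabilistic model of \eqref{eq:PXj}, exploiting that $\tau$ is geometric and independent of $(X_j)$ and that the conditional laws there admit tail bounds governed by the running position $P_j := k + X_1 + \cdots + X_j$. The three ingredients are per-step tail bounds, a reduction to partial sums, and an induction on the number of summands.

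\emph{Per-step tail bounds.} From $r(n)\le C_\beta'(1\vee(n-\Theta_k k))^{-\beta'}$ one gets $\bP(X_1\ge\ell)=\hr(\ell-n_0)\le C_\beta'(1\vee(\ell-n_0-\Theta_k k))^{-\beta'}$. For $j\ge1$, insert $h^i(p)\le C_\beta(1\vee(p-\Theta_i i))^{-\beta}$ into $h^{k+S_{j-1}}_{X_j}(\ell-n_0)=C_h\sum_{i=0}^{X_j}h^{k+S_{j-1}+i}(X_j+\ell-n_0-i)$ and note that every base index occurring here lies in $[k,P_j]$, so $\Theta_i\le\Theta_k^*$ throughout; reindexing by $p=X_j-i$ yields
\[
\bP(X_{j+1}\ge\ell\mid X_1,\dots,X_j)\le\min\Bigl\{1,\;C_hC_\beta\sum_{p=0}^{X_j}\bigl(1\vee(\ell-n_0+(1+\Theta_k^*)p-\Theta_k^*P_j)\bigr)^{-\beta}\Bigr\}.
\]
Bounding each summand by its value at $p=0$ gives $\le C_hC_\beta(X_j+1)(1\vee(\ell-n_0-\Theta_k^*P_j))^{-\beta}$; extending the sum to all $p\ge0$ and using $\beta>1$ gives the complementary bound $\le C(\beta)C_hC_\beta(1\vee(\ell-n_0-\Theta_k^*P_j))^{-(\beta-1)}$, which is what saves us when $X_j$ is atypically large. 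The essential structural point is that the ``free region'' of the $(j+1)$th increment, i.e.\ the set of $\ell$ on which the bound is vacuous, has length only $O(\Theta_k^*P_j)$, a small multiple of the current position.

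\emph{Reduction and induction.} Since $\bP(S\ge n)=\sum_{m\ge1}(1-\theta)^{m-1}\theta\,\bP(S_m\ge n)$, $S_m=X_1+\cdots+X_m$, it suffices to prove (uniformly in $k,n$) $\bP(S_m\ge n)\le D_m\,C_\beta'(\Theta_k^*k+1)^{\beta'}n^{-\beta'}$ with $D_m\le C_0\rho^{m}$, where $\rho=\rho(\epsilon,\Theta,\theta,\beta,\beta')$ can be brought arbitrarily close to $1$ by first choosing a small $\epsilon$ (in terms of $\theta,\beta,\beta'$) and then $\Theta$ small relative to $\epsilon$; this is where the smallness of $\Theta$ enters, since it secures $(1-\theta)\rho<1$ and hence $C:=\sum_m(1-\theta)^{m-1}\theta D_m<\infty$, the constant of the statement. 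The base case $m=1$ is the $X_1$ bound, after converting $(1\vee(n-n_0-\Theta_k k))^{-\beta'}\le(2(n_0+1))^{\beta'}(\Theta_k^*k+1)^{\beta'}n^{-\beta'}$ by separating $n\ge2(n_0+1)(\Theta_k^*k+1)$ from smaller $n$, where the right side is already $\ge1$. For the inductive step, write $S_m=S_{m-1}+X_m$ and split $\{S_m\ge n\}\subseteq\{S_{m-1}\ge(1-\epsilon)n\}\cup\{X_m\ge\epsilon n\}$. The first event is controlled by the inductive hypothesis for $m-1$ with the same $k$, at the cost of the factor $(1-\epsilon)^{-\beta'}$. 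For the second, condition on $X_1,\dots,X_{m-1}$ and split according to whether $\Theta_k^*P_{m-1}<\epsilon n/2$: on that ``moderate position'' event the first per-step bound gives $(1\vee(\epsilon n-n_0-\Theta_k^*P_{m-1}))^{-\beta}\le(\epsilon n/4)^{-\beta}$, leaving the factor $\bE[(X_{m-1}+1)\mathbf 1_{X_{m-1}<\epsilon n/(2\Theta_k^*)}]\le n_0+1+\sum_{s}\bP(S_{m-1}\ge s)$, which by the inductive hypothesis contributes only $\propto D_{m-1}^{1/\beta'}(\Theta_k^*k+1)$ (with $1/\beta'<1$ when $\beta'>1$), multiplied by $n^{-\beta}\le n^{-\beta'}$; on the complementary ``large position'' event the probability is $\le\bP(S_{m-1}\ge\epsilon n/(2\Theta_k^*)-k)$, estimated by the inductive hypothesis with a gain $(4\Theta_k^*/\epsilon)^{\beta'}\le(4\Theta/\epsilon)^{\beta'}$, and trivially covered when $n<4\Theta_k^*k/\epsilon$ (there $(\Theta_k^*k+1)^{\beta'}n^{-\beta'}$ already exceeds a fixed constant). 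Collecting, $D_m\le[(1-\epsilon)^{-\beta'}+(4\Theta/\epsilon)^{\beta'}]D_{m-1}+C'D_{m-1}^{1/\beta'}+C''$; since the middle term is sublinear in $D_{m-1}$, this recursion has geometric growth at rate $\rho=(1-\epsilon)^{-\beta'}+(4\Theta/\epsilon)^{\beta'}$ up to an arbitrarily small correction. One then converts $(1\vee(n-\cdot))^{-\beta'}$ back to $n^{-\beta'}$ and sums the geometric series.

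\emph{Main obstacle and a special case.} The delicate point is precisely the bookkeeping just sketched: the per-step estimate carries the unavoidable factor $X_j+1$ from the $C_h\sum_i$ structure of $h^k_n$, and the free regions accumulate along the path, so one must verify that the error constants grow more slowly than the geometric clock $\tau$ decays---this is the role of $\Theta$ being small (relative to $\theta,\beta,\beta'$). A further wrinkle is the range $\beta'\le1$, where $\sum_{s}\bP(S_{m-1}\ge s)$ is no longer summable and the collapse $D_{m-1}^{1/\beta'}\le D_{m-1}$ is ruinous; there one should instead propagate the inductive estimate in the shifted form $\bP(S_m\ge n)\le C_\beta'P_m\bigl(1\vee(n-L_m(\Theta_k^*k+1))\bigr)^{-\beta'}$ with $P_m,L_m$ growing at most geometrically, using the complementary per-step bound (exponent $\beta-1$) together with $\beta>1$ to absorb the extra power of $n$ that appears, and collapsing to the $n^{-\beta'}$ form only at the very end.
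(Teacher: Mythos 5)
Your argument takes a genuinely different route from the paper's and has a real gap when $\beta' \le 1$, a case explicitly allowed by the statement. The paper fixes $n$, expands $\bP(S_{j+1}\ge n)=\sum_\ell\bP(X_{j+1}\ge n-\ell\mid S_j=\ell)\bP(S_j=\ell)$, splits the $\ell$-range into $\ell\le A$, $A<\ell<bn$, $\ell\ge bn$, and applies Abel summation on the middle range. The telescoping is the crucial device: it replaces the block $C_h\sum_{i\le\ell}h^{k+i}(\cdot)$ by the single increment $C_h h^{k+\ell}(\cdot)$, so no factor proportional to the size of the previous step ever enters the estimate. You instead use $\{S_m\ge n\}\subseteq\{S_{m-1}\ge(1-\epsilon)n\}\cup\{X_m\ge\epsilon n\}$ and an explicit geometric recursion for $D_m$; for $\beta'>1$ this is sound in outline, since the loss $(1-\epsilon)^{-\beta'}+(4\Theta/\epsilon)^{\beta'}$ can be pushed below $(1-\theta)^{-1}$ by choosing $\epsilon$ and then $\Theta$ small, and your expectation $\bE[(X_{m-1}+1)\mathbf{1}_{\cdots}]$ is controlled because $\sum_s s^{-\beta'}<\infty$.

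For $\beta'\le1$, however, your per-step estimate carries the multiplicative factor $X_{m-1}+1$ coming from the length of the sum in $h^{k+S_{m-2}}_{X_{m-1}}$, and its conditional expectation is not under control. The inductive hypothesis only gives $\bP(S_{m-1}\ge s)\lesssim (\Theta_k^*k+1)^{\beta'}s^{-\beta'}$, which is not summable for $\beta'\le1$; truncating at $\epsilon n/(2\Theta_k^*)$ introduces an uncontrolled factor $(\Theta_k^*)^{\beta'-1}$ that blows up as $\Theta_k^*\to0$; and the complementary per-step bound with exponent $\beta-1$ yields a finite conditional mean only when $\beta>2$, whereas the proposition allows any $\beta>1$. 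The ``shifted form'' you sketch does not obviously circumvent this obstruction and is too vague to evaluate. Abel summation is precisely what the paper's proof uses to eliminate the $X_{m-1}+1$ factor and cover all $0<\beta'\le\beta$ with $\beta>1$ uniformly.
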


\begin{proof}
    Suppose, without loss of generality, that $h^j$ are nonincreasing, so that $\hh^j_n(\ell) \leq h^j_n(\ell)$.

    Denote $S_j = X_1 + \cdots + X_j$; then $S = S_\tau$.
    We regard $n \geq 1$ and $j \geq 1$ as fixed and write
    \begin{equation}\label{eq:H}
    		\bP(S_{j+1} \geq n) =  \sum_{ \ell = 1}^{n+1} H_\ell,
    \end{equation}
    where
    \begin{align*}
    	H_\ell &= \bP(X_{j+1} \geq n - \ell \mid S_j = \ell) \bP(S_j = \ell), \quad 1 \le \ell \le n, \\
    	H_{n+1} &=  \bP(S_j > n).
    \end{align*}
    From~\eqref{eq:hnk},
    \[
        h^{k + S_{j-1} }_{X_j} \leq h^k_{S_j}
        .
    \]
    Using this inequality together with~\eqref{eq:PXj},
    \[
        \bP(X_{j + 1} \geq n - \ell \mid S_j = \ell)
        \leq h^k_\ell (n - \ell - n_0)
        = C_h \sum_{i = 0}^{\ell } h^{k + i} (n - i - n_0)
        ,
    \]
    where we use the convention $h^i(\ell) = 0$ for $\ell \le 0$.

    To estimate~\eqref{eq:H}, we decompose the sum $\sum_{1 \le \ell \le n + 1} H_\ell$
    intro three parts. To this end, fix $b \in (1/2, 1)$ by
    $b = \bigl(1 + (1 - \theta)^{\frac{1}{2 \beta'}} \bigr) / 2$ and
    let $A \ge 1$ be a large integer whose value will be specified later.
    Restrict to $\Theta$ sufficiently small so that $1 - b - \Theta b > 0$.
    Let $C$ denote various constants which depend only on
    $A, n_0, \theta, \Theta, C_h, C_\beta, \beta, \beta'$. In particular, $C$ is independent of $k$.

    Define
     \[
    R_j = \sup_{ n \ge 1 } n^{\beta'} \bP(S_j \geq n)
    .
    \]
    Suppose that
    \begin{align}\label{eq:restrict_n}
    n > \max \biggl\{ \frac{8 (  A + n_0 )}{ 1 - b - \Theta b  }, b^{-1}  \frac{2 \Theta_k^* k}{1 - b - \Theta b}   \biggr\}.
    \end{align}
    By our assumption on $h^j$,
    \begin{equation}
        \label{eq:A}
        \begin{aligned}
            \sum_{\ell \leq A} H_\ell
            & \leq \sup_{\ell \leq A} \bP(X_{k + 1} \geq n - \ell \mid S_j = \ell)
            \leq C_h \sum_{i \leq A} h^{k + i} (n - i - n_0)
            \\
            & \leq C_h C_\beta A (  n - 2A - n_0 - \Theta_k^* k )^{-\beta} \\
            &\leq C  n^{-\beta}.
        \end{aligned}
    \end{equation}
    In particular,
    \begin{equation}\label{eq:B}
        \sum_{i \leq A} h^{k + i}( n - i - n_0 )
        \leq C n^{-\beta}
        .
    \end{equation}
    Next,
    \begin{equation}
        \label{eq:b}
        \sum_{b n \leq \ell \leq n + 1} H_\ell
        \leq \bP(S_j \geq b n)
        \leq R_j b^{-\beta'} n^{-\beta'}
        .
    \end{equation}
    Further, it is a direct verification that
    \begin{equation}\label{eq:C}
        \sum_{A < \ell \le b n} h^{k+\ell}(n - \ell - n_0)
        \leq C \sum_{A < \ell \le b n} (n - \ell - n_0 -  \Theta_k^*  (k + \ell ) )^{-\beta} \ell^{-\beta'}
        \le A_n n^{-\beta'}
        ,
    \end{equation}
    where $\lim_{A \to \infty , \, n \to \infty} A_n = 0$.
    Now we make the promised choice of $A$, fixing it sufficiently large so that
    \begin{equation}
        \label{eq:AA}
        b^{- \beta'}  + C_h A_n
        \leq (1 - \theta )^{-1/2}
    \end{equation}
    for all $n$ satisfying~\eqref{eq:restrict_n}.

    Using summation by parts together with~\eqref{eq:B} and~\eqref{eq:C},
    \begin{equation}
        \label{eq:Ab}
        \begin{aligned}
            \sum_{A \leq \ell < b n} H_\ell
            & \leq \sum_{A \leq \ell < b n} C_h \sum_{i \leq \ell} h^{k + i} (n - i - n_0) \bP( S_j = \ell )
            \\
            & = - C_h \bP (S_j \geq b n) \sum_{i \le b n} h^{k + i}(n - i - n_0) \\
            &\quad\, + C_h \bP (S_j \geq A) \sum_{i \leq A} h^{k + i}(n - i - n_0) \\
            & \quad\, + C_h \sum_{A < \ell \le  b n} h^{k + \ell } (n - \ell - n_0) \bP(S_j \geq \ell)
            \\
            & \leq C n^{-\beta} + C_h A_n R_j n^{-\beta'}.
        \end{aligned}
    \end{equation}

    Assembling~\eqref{eq:H},~\eqref{eq:A},~\eqref{eq:b} and~\eqref{eq:Ab}, we obtain
    \begin{align*}
        \bP(S_{j+1} \ge n )
        & \leq C n^{- \beta } + R_j n^{- \beta'} (  b^{- \beta'}  + C_h A_n)
        \\
        & \leq C n^{- \beta } + R_j n^{- \beta'} (1 - \theta )^{-1/2}
    \end{align*}
    for all $n$ satisfying~\eqref{eq:restrict_n}, in particular for $n \geq C \Theta_k^* k$.
    Hence for all $n \geq 1$,
    \[
        \bP(S_{j+1} \ge n )
        \le C n^{- \beta } + R_j n^{- \beta'} (1 - \theta )^{-1/2}
        + C ( \Theta_k^* k )^{ \beta' } n^{- \beta'}
        .
    \]
    Then
    \[
        R_{j + 1}
        \leq C + C ( \Theta_k^* k )^{ \beta' }  + (1 - \theta)^{- 1/2} R_j.
    \]
    By induction,
    \begin{align*}
    	R_{j} \le  C_\beta'  C ( \Theta_k^* k + 1 )^{ \beta' }  (1 - \theta)^{- j/2} .
    \end{align*}
    This extends to
    \[
        \bP(S_j \geq n) \leq C_\beta'  C_2 ( \Theta_k^* k + 1 )^{ \beta' }  (1 - \theta)^{- j/2} n^{-\beta'}
        .
    \]

    Since $\tau$ and $S_j$ are independent and $\bP(\tau = j) = (1 - \theta)^{j - 1} \theta $,
    \begin{align*}
        \bP(S \geq n)
        &= \sum_{j \geq 1} \bP(S_j \geq n) \bP(\tau = j) \\ 
        &\leq C_\beta'  C_2 ( \Theta_k^* k + 1 )^{ \beta' } \theta (1 - \theta)^{-1} \sum_{j \geq 1} (1 - \theta)^{j / 2} n^{-\beta'}
        \leq C_\beta'  C_3 ( \Theta_k^* k + 1 )^{ \beta' } n^{- \beta'}
        ,
    \end{align*}
    as desired.
\end{proof}


\appendix

\section{Proof of Proposition~\ref{prop:mixing}}\label{sec:prop_mixing}

Assume that conditions (i) and (ii) in Proposition~\ref{prop:mixing} hold.
Let $N_1 = \max \{  p_1^2, \ldots, p_M^2 \}$. By~\cite{S77}, for all $n \ge N_1$ we can write
$
n = \sum_{j=1}^M a_j p_j
$ for some integers $a_j \ge 0$.  For $N_1 \le N < n$, we have
$$
(T_{k,  k + n - 1})_*m_k  = ( T_{ k + n - N  ,  k + n - 1 } )_*\nu,
$$
where $\nu = (T_{k,  k +  n - N - 1  })_*m_k$. By Proposition~\ref{prop:regular}, $\nu$ is regular w.r.t $T_{k + n - N}, T_{k + n - N + 1}, \ldots$, and
$\bigl| \nu |_{ Y_{k + n - N } } \bigr|_{ \LL, k + n - N } \le K_1$.

Let $N = N_1 + N_\#$. Condition (i) in Proposition~\ref{prop:mixing} reads
\begin{align}\label{eq:uniform_tail_bound}
\sup_{k,j \ge 1} (T_{k, k+j-1})_*m_k( \tau_{k + j} \geq N_\# ) \le 1/2.
\end{align}
Using the regularity of
$\nu$, we obtain
\begin{align*}
	&(T_{k, k + n - 1  })_*m_k(Y_{ k + n   }) \\ 
	&\ge \sum_{ \ell \le N_\#}
	( T_{ k + n - N  , k + n - 1  } )_*( \nu  |_{  \{  \tau_{k + n - N} = \ell   \}  }  ) (Y_{k + n}) \\
	&= \sum_{\ell \le N_\#}
	( T_{ k + n - N  + \ell   , k + n - 1  } )_*  ( T_{ k + n - N ,   k + n - N  +  \ell - 1     } )_* (   \nu  |_{  \{  \tau_{k + n - N} = \ell   \}  }   ) (Y_{k+n})
	\\
	&\ge  \sum_{\ell \le N_\#}  e^{ -  \text{diam}(X)  K_1} (  T_{ k + n - N , k + n - N + \ell - 1  } )_*(  \nu  |_{  \{  \tau_{k + n - N} = \ell   \}  }   ) (  Y_{k + n - N + \ell } ) \\ 
	&\times (T_{k + n - N + \ell, k + n - 1})_* m_{k + n - N + \ell }(Y_{k + n}).
\end{align*}
As in~\cite[Proposition 4.4]{KKM19}, using our assumption $m_k(  \tau_k = p_m ) \ge \delta_\#$
together with Proposition~\ref{prop:regular}, we find that for any $\ell \le N_\#$,
$$
(T_{k + n - N + \ell, k + n - 1})_* m_{k + n - N + \ell }(Y_{k + n}) \ge ( \delta_\# e^{ - \diam(X) K_1  } )^N.
$$
Summing over $\ell$, we arrive at the lower bound 
\begin{align*}
	(T_{k, k + n - 1  })_*m_k(Y_{k + n}) \ge (\delta_\# e^{- \diam(X) K_1  } )^{N+1} \nu(\tau_{k + n - N } \le N_\# ).
\end{align*}
By~\eqref{eq:uniform_tail_bound}, $\nu(\tau_{k + n - N } \le N_\# ) \ge 1/2$. Therefore,
$$
(T_{k, k + n - 1  })_*m_k(Y_{k + n}) \ge  (\delta_\# e^{- \diam(X) K_1  } )^{N+1}/2,
$$
whenever $n > N_1 + N_\#$. Thus, condition~\ref{ass:mixing} holds.

Finally, to see that $\sup_k \int \tau_k^p \, d m_k < \infty$ implies~\eqref{eq:uniform_tail_bound}, suppose that
$j,k \ge 1$. Since $m_k$ is a measure on $Y_k$ with $| m_k |_{\LL, k} \le K_2$, arguing as in the
proof of Proposition~\ref{prop:onedec}\ref{prop:onedec:tail}, write
\begin{align*}
	 (T_{k, k + j -1})_*m_k (   \tau_{k + j }  > N_\#  )  &\le \frac{1}{2} h_{j}^k (N_\# + 1)  \\
	&= \frac{C_h}{2} \biggl(
	m_k( \tau_k \ge j + N_\# + 1  ) + \ldots + m_{k }( \tau_{k + j} \ge N_\# + 1  )
	\biggr).
\end{align*}
Using $\sup_k \int \tau_k^p \, d m_k < \infty$ together with Markov's inequality, we obtain
$$
(T_{k, k + j -1})_*m_k (   \tau_{k + j }  \geq N_\#  ) \le C \sum_{j \geq N_\#} j^{-p},
$$
where $C$ is a constant independent of $k,j$. Hence~\eqref{eq:uniform_tail_bound} follows by choosing
$N_\#$ sufficiently large.


\bigskip
\bigskip
\bibliography{seq-main}{}
\bibliographystyle{plainurl}

\end{document}